\newtheorem{theorem}{Theorem}
\newtheorem{lemma}[theorem]{Lemma}
\newtheorem{corollary}[theorem]{Corollary}
\newtheorem{proposition}[theorem]{Proposition}
\newtheorem{lettertheorem}{Theorem}
\newtheorem{letterlemma}[lettertheorem]{Lemma}
\theoremstyle{definition}
\theoremstyle{remark}
\numberwithin{equation}{section}
\newcommand{\abs}[1]{\lvert#1\rvert}
\newcommand{\nm}[1]{\lVert#1\rVert}
\newcommand{\D}{\mathbb{D}}
\newcommand{\DD}{\widehat{\mathcal{D}}}
\newcommand{\Dd}{\widecheck{\mathcal{D}}}
\newcommand{\M}{\mathcal{M}}
\newcommand{\SSS}{\mathcal{S}}
\newcommand{\DDD}{\mathcal{D}}
\newcommand{\N}{\mathbb{N}}
\newcommand{\C}{\mathbb{C}}
\renewcommand{\phi}{\varphi}
\def\Area{\mathord{\rm Area}}
\def\HL{\mathord{\rm HL}}
\def\a{\alpha}       \def\b{\beta}        \def\g{\gamma}
     \def\om{\omega}      
       \def\t{\theta}       
         \def\r{\rho}         \def\z{\zeta}
                  \def\vp{\varphi}
\def\omg{\widehat{\omega}}
\def\U{{\mathcal U}}
\renewcommand{\H}{\mathcal{H}}
\begin{document}

\title[Norm inequalities for weighted Dirichlet spaces]{Norm inequalities for weighted Dirichlet spaces with applications to conformal maps}

\keywords{Bergman space, conformal map, Dirichlet space, doubling weight, Hardy space, norm inequality, univalent function}

\author{Fernando P\'erez-Gonz\'alez}
\address{Universidad de La Laguna,  Departament of Mathematical Analysis and Instituto de Matematicas
y Aplicaciones (IMAULL), P.O. Box 456, 38200 La Laguna, Tenerife, Spain}
\email{fpergon@ull.edu.es}

\author{Jouni R\"atty\"a}
\address{University of Eastern Finland, P.O. Box 111, 80101 Joensuu, Finland}
\email{jouni.rattya@uef.fi}

\author{Toni Vesikko}
\address{University of Eastern Finland, P.O. Box 111, 80101 Joensuu, Finland}
\email{tonive@uef.fi}

\thanks{This research was supported in part by Academy of Finland project no.~356029}

\begin{abstract}
A variety of norm inequalities related to Bergman and Dirichlet spaces induced by radial weights is established. Some of the results obtained can be considered as generalizations of certain known special cases while most of the estimates discovered are completely new. In particular, a Littlewood-Paley estimate recently proved by Pel\'aez and the second author [\emph{Bergman projection induced by radial weight, Adv. Math. 391 (2021). Paper No. 107950, 70 pp.}] is improved in part. The second objective of the paper is to apply the obtained norm inequalities to relate the growth of the maximum modulus of a conformal map~$f$, measured in terms of a weighted integrability condition, to a geometric quantity involving the area of image under $f$ of a disc centered at the origin. Our findings in this direction yield new geometric characterizations of conformal maps in certain weighted Dirichlet and Besov spaces.
\end{abstract}

\maketitle

\renewcommand{\thefootnote}{}
\footnotetext[1]{\emph{Mathematics Subject Classification 2020:}
Primary 30H10, 30H20, 30C55; Secondary 30C45.}

\section{Introduction and main results}

Let $\H(\D)$ denote the space of analytic functions in the unit disc $\D=\{z\in\C:|z|<1\}$. For $0<p\le\infty$, the Hardy space $H^p$ consists of those $f\in\H(\D)$ such that
	$$
	\|f\|_{H^p}=\sup_{0<r<1}M_p(r,f)<\infty,
	$$
where
	\begin{equation*}
	\begin{split}
	M_p(r,f)=\left(\frac{1}{2\pi}\int_0^{2\pi}|f(re^{i\theta})|^p\,d\theta\right)^\frac1p, \quad 0<r<1,
	\end{split}
	\end{equation*}
is the $L^p$-mean of the restriction of $f$ to the circle of radius $r$, and $M_{\infty}(r,f)=\max_{|z|=r}|f(z)|$ is the maximum modulus function. The monographs \cite{Duren70} and \cite{Garnett} are excellent sources for the theory of the Hardy spaces.

A function $\nu:\D\to[0,\infty)$ such that $\int_0^1\nu(r)\,dr<\infty$ and $\nu(z)=\nu(|z|)$ for all $z\in\D$ is called a radial weight. For $0<p<\infty$ and such a~$\nu$, the weighted Bergman space $A^p_\nu$ consists of $f\in\H(\D)$ such that
	$$
	\|f\|_{A^p_\nu}^p=\int_{\D}|f|^p\nu\,dA<\infty,
	$$
where $dA(z)=d\theta r\,dr$ for $z=re^{i\theta}$ denotes the element of the Lebesgue area measure on $\D$.
The corresponding weighted Dirichlet space $D^p_\nu$ is equipped with the norm	
	$$
	\|f\|_{D^p_\nu}^p=\|f'\|_{A^p_\nu}^p+|f(0)|^p.
	$$
Throughout this paper we assume $\widehat{\nu}(z)=\int_{|z|}^1\nu(t)\,dt>0$ for all $z\in\D$, for otherwise $A_{\nu}^{p}=\H(\D)=D^p_\nu$. For the theory of weighted Bergman spaces we refer to \cite{DurSchus,HKZ,P-R2014}.

A radial weight $\omega$ belongs to the class $\DD$ if there exists a constant $C=C(\om)>1$ such that the tail integral $\widehat{\om}$ satisfies the doubling condition
	$$
	\widehat{\omega}(r)\le C\widehat{\omega}\left(\frac{1+r}{2}\right),\quad 0\le r<1.
	$$
Furthermore, if there exist $K=K(\omega)>1$ and $C=C(\omega)>1$ such that
	\begin{equation}\label{Eq:def-of-D-check}
	\begin{split}
	\omg(r)\ge C\omg\left(1-\frac{1-r}{K}\right),\quad 0\le r<1,
	\end{split}
	\end{equation}
then we write $\omega\in\Dd$. It is easy to see that \eqref{Eq:def-of-D-check} is equivalent to
	$$
	\widehat{\om}(r)\le\left(1+\frac1{C-1}\right)\int_r^{1-\frac{1-r}{K}}\om(t)\,dt,\quad 0\le r<1,
	$$
which perhaps explains the behavior of weights in $\Dd$ in a more transparent way. The definitions of both $\DD$ and $\Dd$ have their obvious geometric interpretations. The intersection $\DD\cap\Dd$ is denoted by $\DDD$. Further, $\om\in\M$ if there exist $K=K(\omega)>1$ and $C=C(\omega)>1$ such that
	$$
	\om_x\ge C\om_{Kx},\quad x\ge1.
	$$
Here and from now on $\om_x=\int_0^1r^x\om(r)\,dr$ for all $1\le x<\infty$. Further, as usual, we write $A(x)\lesssim B(x)$ or $B(x)\gtrsim A(x)$ for all $x$ in some set $I$ if there exists a constant $C>0$ such that $A(x)\le C B(x)$ for all $x\in I$. Further, if $A(x)\lesssim B(x)\lesssim A(x)$ in $I$, we say that $A(x)$ and $B(x)$ are comparable and write $A(x)\asymp B(x)$ for all $x\in I$, or simply $A\asymp B$.

Observe that a radial weight $\om$ belongs to $\DD$ if and only if $\om_x\asymp\widehat{\om}\left(1-\frac1x\right)$ for all $1\le x<\infty$ by \cite[Lemma~2.1]{Pelaez2016}, and further, the containment in $\DD$ can also be characterized by the moment condition $\om_x\lesssim\om_{2x}$, see Lemma~\ref{D-hat-lemma}(iv) in Section~\ref{sec:thm1}. However, despite the fact that $\DDD=\DD\cap\Dd=\DD\cap\M$~\cite[Theorem~3]{PR2021}, the classes $\Dd$ and $\M$ do not obey the same analogue because $\Dd\subsetneq\M$ by \cite[Proposition~14]{PR2021}. Further, it is known that these classes emerge naturally in many instances in operator theory related to Bergman spaces. For example, they are intimately related to Littlewood-Paley estimates, bounded Bergman projections and the Dostani\'c problem~\cite{PR2021}, as well as the Carleson measures~\cite{LR,LRW,PelRatEmb,P-R2014}. For basic properties and illuminating examples of weights in these and other related classes of weights, see \cite{Pelaez2016,PR2021,P-R2014} and the references therein.

The purpose of this paper is two fold. First, we consider a variety of norm inequalities related to the Bergman and Dirichlet spaces induced by radial weights. Some of the results obtained can be considered as generalizations of certain known special cases while others are completely new. The second objective of the paper is to apply the obtained norm inequalities in order to relate the growth of the maximum modulus of a conformal map $f$, measured in terms of a weighted integrability condition, to a geometric quantity involving the area of image under $f$ of a disc centered at the origin. Our findings in this direction yield geometric characterizations of conformal maps in certain weighted Dirichlet and Besov spaces.

Our first set of results concern weighted Bergman spaces. Later on we will apply these results to study conformal maps in weighted Dirichlet spaces. For convenience, for each $x\in\mathbb{R}$ and a weight $\nu$, we write $\nu_{[x]}(z)=\nu(z)(1-|z|)^x$ for all $z\in\D$.

\begin{theorem}\label{theorem:norm}
Let $\om$ be a radial weight and $0<p,q<\infty$. Then there exists a constant $C=C(p,q,\om)>0$ such that
	\begin{equation}\label{Eq:norm-Bergman}
	\|f\|_{A^p_{\om_{[q-1]}}}\le C\|f\|_{A^p_{\widehat{\om}_{[q-2]}}},\quad f\in\H(\D),
	\end{equation}
if and only if either $\widehat{\om}_{[q-2]}\in\DD$ or $\widehat{\om}_{[q-2]}\not\in L^1$.
\end{theorem}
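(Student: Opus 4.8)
The plan is to reduce the norm inequality to a one-dimensional comparison of the radial measures that the two weights induce, and then to read off the dichotomy from the moment theory of doubling weights. Write $\mu=\om_{[q-1]}$ and $\sigma=\widehat{\om}_{[q-2]}$, and for a radial weight $\nu$ set $\nu_x=\int_0^1 r^x\nu(r)\,dr$ and $\widehat{\nu}(s)=\int_s^1\nu(t)\,dt$. Passing to polar coordinates gives $\|f\|_{A^p_\mu}^p=2\pi\int_0^1 M_p^p(r,f)\,\mu(r)\,r\,dr$ and likewise for $\sigma$, where the crucial structural fact is that $r\mapsto M_p^p(r,f)$ is nonnegative and nondecreasing. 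Integrating by parts in $\om=-\widehat{\om}'$, one obtains the moment identity $\mu_x=x(\sigma_{x-1}-\sigma_x)-(q-1)\sigma_x$ and, at the level of tails, $\widehat{\mu}(s)=\sigma(s)(1-s)-(q-1)\widehat{\sigma}(s)$, both valid whenever $\sigma\in L^1$ (the boundary contributions vanish since $\sigma\in L^1$ forces $\widehat{\om}(r)(1-r)^{q-1}\to0$). The three ranges $q>1$, $q=1$, $q<1$ affect only the bookkeeping of these endpoint terms. These identities are what tie the two weights together.

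For the necessity I would test \eqref{Eq:norm-Bergman} on the monomials $f=z^n$. Since $M_p^p(r,z^n)=r^{np}$, this yields exactly the moment comparison $\mu_{x_n}\le C^p\sigma_{x_n}$ at the sampled exponents $x_n=np+1$. If $\sigma\notin L^1$ we are in the second alternative and there is nothing more to prove, so assume $\sigma\in L^1$. Feeding the moment identity into the comparison gives $x_n(\sigma_{x_n-1}-\sigma_{x_n})\le(C^p+|q-1|)\sigma_{x_n}$, that is, $\sigma_{x_n-1}\le(1+O(1/x_n))\sigma_{x_n}$ along the arithmetic progression $\{x_n\}$. To upgrade this to all exponents I would use that $x\mapsto\log\sigma_x$ is convex (H\"older), so its one-sided derivatives are monotone; the sampled bound controls $-(\log\sigma)'(x_n)\lesssim1/x_n$, monotonicity fills the gaps of length $p$, and integrating from $x$ to $2x$ yields $\sigma_x\lesssim\sigma_{2x}$. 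By Lemma~\ref{D-hat-lemma}(iv) this is precisely $\sigma\in\DD$.

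For the sufficiency I would split along the same dichotomy. If $\sigma\notin L^1$, then $\sigma$ is non-integrable at the boundary, so for every $f\not\equiv0$ the right-hand side of \eqref{Eq:norm-Bergman} is infinite (indeed $M_p^p(r,f)$ is bounded below by a positive constant for $r$ near $1$), and the inequality holds trivially. If instead $\sigma\in\DD$, I would argue in three steps. First, the upper moment estimate $\sigma_{x-1}-\sigma_x\lesssim\sigma_x/x$ for doubling weights, inserted into the moment identity, gives $\mu_x\lesssim\sigma_x$ for all $x\ge1$. Second, I would upgrade this to the tail comparison $\widehat{\mu}(s)\lesssim\widehat{\sigma}(s)$: since $\widehat{\mu}$ is decreasing, $\mu_x=x\int_0^1 r^{x-1}\widehat{\mu}(r)\,dr\ge x\int_0^s r^{x-1}\widehat{\mu}(r)\,dr\ge\widehat{\mu}(s)\,s^x$, and choosing $x\asymp1/(1-s)$ (the range of small $s$ being trivial, as $\widehat{\mu}(s)\le\widehat{\mu}(0)$ and $\widehat{\sigma}(s)\gtrsim\widehat{\sigma}(1/2)$ there) together with $s^x\asymp1$ and the characterization $\sigma_x\asymp\widehat{\sigma}(1-\tfrac1x)$ of $\DD$ converts $\mu_x\lesssim\sigma_x$ into $\widehat{\mu}(s)\lesssim\widehat{\sigma}(s)$ for every $s\in[0,1)$. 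Third, because $M_p^p(\cdot,f)$ is nondecreasing, the tail comparison of the measures $\mu(r)\,r\,dr$ and $\sigma(r)\,r\,dr$ (the factor $r$ being harmless) implies, by the standard layer-cake argument for increasing integrands, that $\int M_p^p(r,f)\,\mu(r)\,r\,dr\le C\int M_p^p(r,f)\,\sigma(r)\,r\,dr$, which is \eqref{Eq:norm-Bergman}.

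The main obstacle, and the step I expect to demand the most care, is the passage between the three equivalent layers---the norm inequality, the moment comparison $\mu_x\lesssim\sigma_x$, and the tail comparison $\widehat{\mu}\lesssim\widehat{\sigma}$---because monotonicity of $M_p$ only supplies the increasing-function lemma in one direction, and the monomials only sample the moments along a progression. Bridging these cleanly relies on the quantitative moment theory of $\DD$ (the ratio estimate $\sigma_x\asymp\widehat{\sigma}(1-\tfrac1x)$ and the difference estimate $\sigma_{x-1}-\sigma_x\asymp\sigma_x/x$) and on the log-convexity argument that fills the gaps left by the monomial test. A secondary but genuine nuisance is the careful separation of the degenerate regime $\sigma\notin L^1$ and the validity of the integration-by-parts identities across the ranges $q>1$, $q=1$ and $q<1$, which is where the standing hypothesis $\widehat{\om}(z)>0$ and the boundary behaviour of $\widehat{\om}(r)(1-r)^{q-1}$ enter.
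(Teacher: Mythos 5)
Your proposal is correct, and while its skeleton matches the paper's (monomial testing for necessity, a tail-measure comparison plus monotonicity of integral means for sufficiency), the middle layer is genuinely different in both directions, so a comparison is worthwhile. For necessity, both you and the paper test \eqref{Eq:norm-Bergman} on $m_n(z)=z^n$ and feed the resulting moment comparison into the integration-by-parts identity relating $\left(\om_{[q-1]}\right)_x$ and $\left(\widehat{\om}_{[q-2]}\right)_x$; the paper then fills the gaps between the sampled exponents $np+1$ by plain monotonicity of moments and invokes condition (iii) of Lemma~\ref{D-hat-lemma}, whereas you fill them by log-convexity of $x\mapsto\log\left(\widehat{\om}_{[q-2]}\right)_x$ and integrate the derivative bound to land on condition (iv); both devices are sound, and yours is arguably the cleaner quantitative statement. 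For sufficiency, the routes diverge more: the paper never leaves the tail-integral level --- the doubling hypothesis gives $\widehat{\om}(r)(1-r)^{q-1}\lesssim\int_r^1\widehat{\om}_{[q-2]}(t)\,dt$ directly, and the Fubini identity \eqref{eq:thm1-fubini-calc1} then yields the tail comparison \eqref{eq:thm1-combine-calc2} fed into Lemma~\ref{AuxLemmaEmbedding} --- while you detour through moment space, proving $\left(\om_{[q-1]}\right)_x\lesssim\left(\widehat{\om}_{[q-2]}\right)_x$ from the $\DD$ difference estimate and then converting back to tails via the elementary bound $\widehat{\mu}(s)s^x\le\mu_x$ with $x\asymp1/(1-s)$ and the characterization $\nu_x\asymp\widehat{\nu}\left(1-\frac1x\right)$ of $\DD$. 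Your route leans harder on the moment theory of $\DD$ but buys two things: the argument is uniform in $q$, so the case $q=1$ (which the paper disposes of by citing \cite{PR2020}) is covered by the same computation, and the three-layer equivalence (norm inequality, moment comparison, tail comparison) is made explicit, which is exactly the structure exploited later in Theorem~\ref{thm:A2-weight-M}. Two small points deserve care in a write-up: the vanishing of the boundary term $\widehat{\om}(r)(1-r)^{q-1}$ as $r\to1^-$ under $\widehat{\om}_{[q-2]}\in L^1$ requires the short argument comparing $\widehat{\om}\left(\frac{1+r}{2}\right)(1-r)^{q-1}$ with $\int_r^1\widehat{\om}_{[q-2]}(t)\,dt$ (it is not immediate for $0<q<1$), and the difference estimate $\left(\widehat{\om}_{[q-2]}\right)_{x-1}-\left(\widehat{\om}_{[q-2]}\right)_x\lesssim\left(\widehat{\om}_{[q-2]}\right)_x/x$ needs the index-shift comparability $\left(\widehat{\om}_{[q-2]}\right)_{x-1}\asymp\left(\widehat{\om}_{[q-2]}\right)_x$, which again comes from condition (iv); both are routine given Lemma~\ref{D-hat-lemma}.
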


The case $q=1$ is valid by \cite[Theorem~8]{PR2020}, so our contribution consists of treating the other positive values of $q$. Moreover, for each $1<q<\infty$, we have $\widehat{\om}_{[q-2]}\in\DD$ if and only if $\om\in\DD$ by Lemma~\ref{D-hat-lemma}(v). This equivalence is certainly false in general for $0<q\le1$, because, for each $1<\alpha<\infty$, the weight
	\begin{equation}\label{V}
	v_\alpha(z)=\frac{1}{(1-|z|)\left(\log\frac{e}{1-|z|}\right)^\alpha},\quad z\in\D,
	\end{equation}
belongs to $\DD$, but $\widehat{v_\alpha}_{[-1]}$ is not even a weight if $\alpha\le2$. However, if $\om\in\DD$ and $\widehat{\om}_{[q-2]}\in L^1$, then $\widehat{\om}_{[q-2]}\in\DD$ for each $0<q<\infty$ by the proof of Lemma~\ref{D-hat-lemma}(v). The converse implication is also false because there exists a weight $\om\not\in\DD$ such that $\widehat{\om}_{[-1]}\in\DD$ by \cite[Theorem~3]{PR2020}.

The estimate in Theorem~\ref{theorem:norm} would not be much of use unless we were able to say when the two norms or seminorms are actually comparable. The next theorem establishes the norm comparability we are after.

\begin{theorem}\label{thm-iff}
Let $\om$ be a radial weight and $0<p,q<\infty$ such that $\widehat{\om}_{[q-2]}\in L^1$. Then the following statements hold:
\begin{itemize}
\item[\rm(i)] If $0<q<1$, then
	\begin{equation}\label{norm-inequality-asymp}
	\|f\|_{A^p_{\widehat{\om}_{[q-2]}}}\asymp\|f\|_{A^p_{\om_{[q-1]}}},\quad f\in\H(\D),
	\end{equation}
if and only if $\widehat{\om}_{[q-2]}\in\DD$.
\item[\rm(ii)] If $1\le q<\infty$, then \eqref{norm-inequality-asymp} is satisfied if and only if $\om\in\DDD$.
\end{itemize}
\end{theorem}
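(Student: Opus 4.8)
The plan is to reduce the comparability \eqref{norm-inequality-asymp} to two one-sided inequalities and to treat them by entirely different mechanisms. Writing $\mu_1=\om_{[q-1]}$ and $\mu_2=\widehat\om_{[q-2]}$, comparability is the conjunction of the \emph{forward} inequality $\|f\|_{A^p_{\mu_1}}\lesssim\|f\|_{A^p_{\mu_2}}$ and the \emph{reverse} inequality $\|f\|_{A^p_{\mu_2}}\lesssim\|f\|_{A^p_{\mu_1}}$. Since $\widehat\om_{[q-2]}\in L^1$ by hypothesis, Theorem~\ref{theorem:norm} already identifies the forward inequality with the single requirement $\widehat\om_{[q-2]}\in\DD$, which by Lemma~\ref{D-hat-lemma}(v) is the same as $\om\in\DD$ when $1<q<\infty$. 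Thus all three cases come down to pinning down exactly when the reverse inequality holds and then assembling the two conditions.

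For the reverse inequality I would first pass to the radial picture. Integrating in the argument and using that $m(s)=M_p^p(s,f)$ is non-decreasing in $s$ (Hardy's convexity theorem), each norm becomes $\|f\|_{A^p_\nu}^p=2\pi\int_0^1 m(s)\nu(s)s\,ds$. A non-decreasing $m\ge0$ is an integral superposition of the indicators $\mathbf 1_{[\rho,1)}$, so $\int_0^1 m\,d\mu_2\lesssim\int_0^1 m\,d\mu_1$ for every admissible $m$ is governed by the tail comparison $\mu_2([r,1))\lesssim\mu_1([r,1))$, which (the factor $s\asymp1$ on the relevant range being immaterial) reads
\begin{equation}\label{plan:tail}
\int_r^1\widehat\om(s)(1-s)^{q-2}\,ds\lesssim\int_r^1\om(s)(1-s)^{q-1}\,ds,\qquad 0\le r<1.
\end{equation}
The superposition immediately yields \emph{sufficiency} of \eqref{plan:tail} for the reverse inequality. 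The \emph{necessity} of \eqref{plan:tail} requires feeding into the reverse inequality a family of test functions whose means $M_p^p(\cdot,f)$ concentrate near a prescribed radius $r$ (peak functions of the type $(1-\bar a z)^{-\gamma}$ with $|a|=r$, evaluated through the standard asymptotics for integrals of $\DD$-weights), and this is the step I expect to be the main obstacle.

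It then remains to decode \eqref{plan:tail} case by case, and here the decisive computation is the integration by parts
\begin{equation}\label{plan:ibp}
\int_r^1\widehat\om(s)(1-s)^{q-2}\,ds=\frac{1}{q-1}\left(\widehat\om(r)(1-r)^{q-1}-\int_r^1\om(s)(1-s)^{q-1}\,ds\right),\qquad q\neq1.
\end{equation}
For $0<q<1$ the bracket in \eqref{plan:ibp} is bounded by $\int_r^1\om(s)(1-s)^{q-1}\,ds$ up to the constant $\tfrac1{1-q}$, so \eqref{plan:tail} holds for \emph{every} weight; hence the reverse inequality is automatic and comparability reduces to the forward inequality, proving (i). For $1<q<\infty$ identity \eqref{plan:ibp} turns \eqref{plan:tail} into $\widehat\om(r)(1-r)^{q-1}\lesssim\int_r^1\om(s)(1-s)^{q-1}\,ds$, which, together with the estimate $\int_r^1\om(s)(1-s)^{q-1}\,ds\le(1-r)^{q-1}\widehat\om(r)$ valid for every weight when $q>1$, amounts to the two-sided relation $\int_r^1\om(s)(1-s)^{q-1}\,ds\asymp(1-r)^{q-1}\widehat\om(r)$, a known equivalent formulation of $\om\in\Dd$; combined with the $\DD$ coming from the forward inequality this gives $\om\in\DDD$, proving (iii).

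In the borderline case $q=1$ the weight on the right is $\om_{[0]}=\om$, identity \eqref{plan:ibp} degenerates, and \eqref{plan:tail} reads $\int_r^1\widehat\om(s)(1-s)^{-1}\,ds\lesssim\widehat\om(r)$, which is precisely $\widehat\om_{[-1]}\in\Dd$; together with $\widehat\om_{[-1]}\in\DD$ from the forward inequality this yields $\widehat\om_{[-1]}\in\DDD$, proving (ii). The genuine subtlety of $q=1$ is that $\om\in\DD$ is \emph{not} available here, since there exist weights $\om\notin\DD$ with $\widehat\om_{[-1]}\in\DD$; consequently the condition must be expressed through $\widehat\om_{[-1]}$ rather than $\om$, and the necessity of \eqref{plan:tail} cannot be shortcut by the monomial/moment bootstrap (converting moment inequalities to tail inequalities via $\om_x\asymp\widehat\om(1-1/x)$) that is available when $q\neq1$. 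This is why the peak-function estimation flagged above is indispensable precisely at the borderline exponent.
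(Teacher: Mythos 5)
Your part (i) and the sufficiency halves of (ii)--(iii) are correct and in fact coincide with the paper's argument: your ``superposition over indicators'' is exactly Lemma~\ref{AuxLemmaEmbedding}, the integration by parts is \eqref{eq:thm1-fubini-calc1}, and the decoding of the tail condition as a $\Dd$-type condition via Lemma~\ref{Lemma:weights-in-R}(v) (for $q>1$) and (vii) (for $q=1$) is precisely Proposition~\ref{proposition-norm}(i),(iii). The genuine gap is the step you yourself flag as the ``main obstacle'': the claim that the reverse inequality $\|f\|_{A^p_{\widehat{\om}_{[q-2]}}}\lesssim\|f\|_{A^p_{\om_{[q-1]}}}$ forces the tail comparison $\int_r^1\widehat{\om}(s)(1-s)^{q-2}\,ds\lesssim\int_r^1\om(s)(1-s)^{q-1}\,ds$. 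This implication is not merely hard --- it is \emph{false}, so no family of peak functions can establish it. Indeed, for $p=2$ and $1\le q<\infty$, Theorem~\ref{thm:A2-weight-M} shows that the reverse inequality holds if and only if $\om\in\M$, while your tail condition is equivalent to $\om\in\Dd$; since $\Dd\subsetneq\M$ by \cite[Proposition~14]{PR2021}, any $\om\in\M\setminus\Dd$ satisfies the reverse inequality for all $f\in\H(\D)$ yet violates the tail condition. The conceptual reason is the one your own reduction hides: the means $M_p^p(\cdot,f)$ of analytic functions form a much smaller cone than all non-decreasing functions, so the reverse inequality is strictly weaker than the tail comparison, and testing it (with peak functions, monomials, or anything else) can only ever produce moment conditions of $\M$-type.

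The theorem survives because necessity needs to be extracted from the \emph{conjunction} of the two inequalities, not from the reverse one alone, and this is how the paper proceeds: the forward inequality gives $\widehat{\om}_{[q-2]}\in\DD$ (Theorem~\ref{theorem:norm}); testing the reverse inequality with monomials gives $\widehat{\om}_{[q-2]}\in\M$ and $\om\in\M$ (Proposition~\ref{proposition-norm}(ii)); and the decisive ingredient entirely absent from your plan is the identity $\DD\cap\M=\DDD=\DD\cap\Dd$ of \cite[Theorem~3]{PR2021}, which upgrades the pair ($\DD$, $\M$) to $\DDD$ and hence to the $\Dd$-type condition you were aiming for. Without this identity (or an equivalent substitute) there is no route from the necessary $\M$-type information to $\Dd$. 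Your closing remark about $q=1$ is also backwards: the paper handles $q=1$ exactly by the monomial/moment bootstrap --- Proposition~\ref{proposition-norm}(ii) covers all $1\le q<\infty$ and yields $\widehat{\om}_{[-1]}\in\M$ --- the only peculiarity of the borderline case being that the conditions must be phrased for $\widehat{\om}_{[-1]}$ rather than for $\om$; no peak-function estimation is needed at any exponent.
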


A natural interpretation of Theorem~\ref{thm-iff} is that one may replace $\om$ by the weight
	$$
	\widetilde{\om}(z)=\frac{\widehat{\om}(z)}{1-|z|},\quad z\in\D,
	$$
in the weighted Bergman space norm
	$$
	\|f\|_{A^p_{\om_{[q-1]}}}^p=\int_\D|f(z)|^p(1-|z|)^{q-1}\om(z)\,dA(z),\quad f\in\H(\D),
	$$
without any essential loss of information, provided $\om$ and $q$ satisfy the corresponding requirement in the two different cases of the theorem. Further, if also $\widetilde{\om}$ satisfies the same requirement, then one may apply the theorem again to deduce that $\omega$ can be replaced by the weight
	$$
	\widetilde{\widetilde{\om}}(z)=\frac{\widehat{\widetilde{\om}}(z)}{1-|z|}=\frac{\int_{|z|}^1\frac{\widehat{\om}(t)}{1-t}\,dt}{1-|z|},\quad z\in\D,
	$$
which is differentiable and strictly positive. Therefore one may assume $\om$ to obey such smoothness properties which are in general not valid in the classes of weights considered, and that is certainly a very useful property in many concrete calculations.

In view of Theorems~\ref{theorem:norm} and~\ref{thm-iff} it is natural to ask when the norm inequality converse to \eqref{Eq:norm-Bergman} is valid. This question is answered in the case $p=2$ by the following result.

\begin{theorem}\label{thm:A2-weight-M}
Let $\om$ be a radial weight and $1\le q<\infty$. Then the following statements are equivalent:
\begin{itemize}
\item[\rm(i)] There exists a constant $C=C(\om,q)>0$ such that
	\begin{equation}
	\|f\|_{A^2_{\widehat{\om}_{[q-2]}}}\le C \|f\|_{A^2_{\om_{[q-1]}}},\quad f\in\H(\D);
	\end{equation}
\item[\rm(ii)] There exists a constant $C=C(\om,q)>0$ such that
	\begin{equation}
	\left(\widehat{\om}_{[q-2]}\right)_x\le C \left(\om_{[q-1]}\right)_x,\quad 1\le x<\infty;
	\end{equation}
\item[\rm(iii)] $\om\in\M$.
\end{itemize}
\end{theorem}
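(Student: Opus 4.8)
The plan is to handle the two equivalences $\mathrm{(i)}\Leftrightarrow\mathrm{(ii)}$ and $\mathrm{(ii)}\Leftrightarrow\mathrm{(iii)}$ separately, exploiting that for $p=2$ the Bergman norm is diagonal in the monomial basis. Writing $f(z)=\sum_{n\ge0}a_nz^n$ and using the orthogonality of $\{z^n\}$ on circles, for any radial weight $\nu$ one has $\|f\|_{A^2_\nu}^2=2\pi\sum_{n\ge0}|a_n|^2\nu_{2n+1}$. Hence $\mathrm{(i)}$ is equivalent to the coefficientwise estimate $(\widehat\om_{[q-2]})_{2n+1}\le C\,(\om_{[q-1]})_{2n+1}$ for all $n$, which is exactly $\mathrm{(ii)}$ restricted to the odd integers, and conversely $\mathrm{(ii)}\Rightarrow\mathrm{(i)}$ by summing. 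To upgrade the odd–integer inequality to all $x\ge1$ I would use that $x\mapsto\nu_x=\int_0^1 r^x\nu\,dr$ is logarithmically convex (H\"older), so the ratios $\nu_{x-1}/\nu_x$ are monotone and in particular $\nu_x\asymp\nu_{x+2}$ for each fixed weight; combined with the monotonicity of $x\mapsto\nu_x$, this interpolates the inequality across each interval $[2n+1,2n+3]$ and settles $\mathrm{(i)}\Leftrightarrow\mathrm{(ii)}$.

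For $\mathrm{(ii)}\Leftrightarrow\mathrm{(iii)}$ the starting point is the exact moment identity obtained by writing $\om=-(\widehat\om)'$ and integrating by parts,
\[
(\om_{[q-1]})_x=x\,(\widehat\om_{[q-2]})_{x-1}-(x+q-1)\,(\widehat\om_{[q-2]})_x,\qquad x\ge1,
\]
where the boundary terms vanish since $\widehat\om(1)=0$ and $q\ge1$. Abbreviating $a_x=(\om_{[q-1]})_x$ and $b_x=(\widehat\om_{[q-2]})_x$, this turns $\mathrm{(ii)}$, i.e. $b_x\le Ca_x$, into the purely one–variable statement
\[
\frac{b_{x-1}}{b_x}\ge 1+\frac{q-1+1/C}{x},\qquad x\ge1,
\]
that is, a per–step decay of the moment sequence $b_x$ at a rate strictly exceeding $(q-1)/x$. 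For $q>1$ I would normalise by the Beta factor $B(x+1,q-1)=\int_0^1 r^x(1-r)^{q-2}\,dr$ and use $B(x+1,q-1)/B(x,q-1)=x/(x+q-1)$, which absorbs the threshold and makes $\mathrm{(ii)}$ equivalent to a genuine positive–rate per–step decay of $B_x:=b_x/B(x+1,q-1)$, the average of $\widehat\om$ against the probability kernel $\pi_x=B(x+1,q-1)^{-1}r^x(1-r)^{q-2}\,dr$; the borderline $q=1$ is the limiting case in which the threshold is $0$ and $\mathrm{(ii)}$ reduces directly to positive–rate per–step decay of $b_x=(\widehat\om_{[-1]})_x$.

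It then remains to show that this decay condition is equivalent to $\om\in\M$. One half is soft: a moment sequence satisfies $c_{x-1}/c_x\ge1+\delta/x$ for some $\delta>0$ if and only if it obeys the $\M$–type inequality $c_x\ge C'c_{Kx}$, the nontrivial direction following because log–convexity makes the ratios $c_x/c_{x-1}$ monotone, so the integrated decay over a block $[x,Kx]$ forces decay of the smallest, hence every, individual step. The genuinely analytic point is to transfer this between the two measures $\om\,dr$ and $\widehat\om(r)(1-r)^{q-2}\,dr$: the moments $b_x$ and $\om_x$ are \emph{not} two–sidedly comparable in general (only the one–sided bound $b_x\gtrsim x^{-(q-1)}\widehat\om(1-1/x)$ is robust), so the comparison must be made at the level of where the mass sits. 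For $\om\in\M\Rightarrow\mathrm{(ii)}$ I would split $[0,1]$ at $1-\tfrac1x$: on $[0,1-\tfrac1x]$ the kernel $\pi_x$ is controlled by $r^x(1-r)^{q-1}$, while on $[1-\tfrac1x,1]$ the excess boundary contribution is $\lesssim x^{-(q-1)}\widehat\om(1-1/x)$, which the fast decay built into $\om\in\M$ dominates by $a_x$. For the converse I would argue contrapositively: if $\om\notin\M$ then its moments decay too slowly, equivalently $\widehat\om$ carries too much boundary mass, forcing $B_{x-1}/B_x\to1$ and destroying the positive per–step rate. Throughout, the monotonicity of $\widehat\om$ is the structural input that replaces the missing $\DD$–type regularity of $\om$.

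I expect this last transfer to be the main obstacle: everything preceding it is either exact algebra (Parseval, the integration–by–parts identity) or soft convexity, whereas relating the decay of the averaged, tail–weighted moments $(\widehat\om_{[q-2]})_x$ to the membership $\om\in\M$ requires quantitative control of boundary versus interior mass and cannot be reduced to a pointwise or two–sided moment comparison. Finally, the degenerate case $\widehat\om_{[q-2]}\notin L^1$ should be recorded: there $b_x\equiv\infty$, so $\mathrm{(i)}$ and $\mathrm{(ii)}$ both fail; and since $\om\in\M$ forces $\widehat\om(r)\lesssim(1-r)^s$ for some $s>0$ and hence $\widehat\om_{[q-2]}\in L^1$ whenever $q\ge1$, the condition $\mathrm{(iii)}$ fails as well, so the stated equivalence persists.
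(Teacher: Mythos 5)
Your proposal gets the ``algebraic'' half of the theorem right, but leaves the analytic core of the equivalence with $\M$ unproved.

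The equivalence (i)$\Leftrightarrow$(ii) is correct and is exactly the paper's argument: Parseval reduces (i) to the moment inequality at the exponents $2n+1$, and log-convexity of $x\mapsto\nu_x$ (so that $\nu_x\asymp\nu_{x+2}$ for each fixed radial weight) interpolates to all $x\ge1$. Your integration-by-parts identity $(\om_{[q-1]})_x=x(\widehat{\om}_{[q-2]})_{x-1}-(x+q-1)(\widehat{\om}_{[q-2]})_x$ is correct, and the per-step-decay reformulation of (ii) is a nice repackaging that the paper does not make explicit. Your splitting of $[0,1]$ at $1-\tfrac1x$ for (iii)$\Rightarrow$(ii) is also the paper's skeleton; in fact for $q>1$ your boundary treatment is simpler than the paper's, since $\int_{1-1/x}^1\widehat{\om}_{[q-2]}(t)\,dt\le\tfrac{1}{q-1}x^{1-q}\widehat{\om}(1-\tfrac1x)\lesssim x^{1-q}\om_x\lesssim(\om_{[q-1]})_x$, where the middle inequality holds for every radial weight and the last one is Lemma~\ref{M-lemma-1}(iii). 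But at $q=1$ this step is false: $\int_{1-1/x}^1\frac{\widehat{\om}(t)}{1-t}\,dt$ is \emph{not} $\lesssim\widehat{\om}(1-\tfrac1x)$ in general (for $\widehat{\om}(r)=(\log\tfrac{e}{1-r})^{-2}$ the two sides differ by the factor $\log(ex)$); here the paper must invoke the genuine $\M$-characterization $\int_{1-1/x}^1\widehat{\om}_{[q-2]}(t)\,dt\lesssim x^{1-q}\om_x$ of Lemma~\ref{M-lemma}(iv), whose proof is a separate nontrivial computation. Your interior kernel bound ($\int_0^t r^x(1-r)^{q-2}\,dr\lesssim t^x(1-t)^{q-1}$ for $t\le1-\tfrac1x$) is asserted rather than proved; the paper proves it by iterated integration by parts.

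The genuine gap is (ii)$\Rightarrow$(iii). Your contrapositive sketch, that $\om\notin\M$ should force $B_{x-1}/B_x\to1$, is a restatement of the goal, not an argument: nothing in your outline converts slow block decay of $\om_x$ into slow per-step decay of $(\widehat{\om}_{[q-2]})_x$. The paper closes this direction with precisely the estimate you set aside as non-robust: for \emph{every} radial weight, $(\widehat{\om}_{[q-2]})_x\gtrsim x^{1-q}\om_x$, proved in Proposition~\ref{proposition-norm}(ii) via nonnegativity of $\psi(r)=\int_0^r(1-t)^{q-2}t^x\,dt-Cx^{1-q}r^{x+q}$. Granting this, (ii) instantly yields $\om_x\lesssim x^{q-1}(\om_{[q-1]})_x$, which is the moment characterization of $\M$ in Lemma~\ref{M-lemma-1}(iii), and the proof ends. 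Your weaker ``robust'' bound $(\widehat{\om}_{[q-2]})_x\gtrsim x^{1-q}\widehat{\om}(1-\tfrac1x)$ cannot substitute for it: combined with (ii) it gives only $\widehat{\om}(1-\tfrac1x)\lesssim x^{q-1}(\om_{[q-1]})_x$, and upgrading $\widehat{\om}(1-\tfrac1x)$ to $\om_x$ on the left is exactly the regularity $\om_x\lesssim\widehat{\om}(1-\tfrac1x)$ that characterizes $\DD$, which is unavailable here. (Your hesitation is not groundless: the paper's unimodality argument for $\psi$ is transparent for $q\ge2$ and more delicate for $1\le q<2$; but without this bound or a replacement, your direction (ii)$\Rightarrow$(iii) is simply missing.) A smaller caveat: your soft block-versus-step lemma uses log-convexity, which holds for the moment sequences $b_x$ and $\om_x$ but not, a priori, for the Beta-normalized ratio $B_x$, so the threshold bookkeeping there also needs care.
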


Proposition~\ref{proposition-norm}(ii) shows that, for each radial weight $\om$, $1\le q<\infty$ and $0<p<\infty$, a necessary condition for the estimate
	\begin{equation}\label{norm-inequality-}
	\|f\|_{A^p_{\widehat{\om}_{[q-2]}}}\lesssim\|f\|_{A^p_{\om_{[q-1]}}},\quad f\in\H(\D),
	\end{equation}
to hold is $\om\in\M$. We have been unable to judge if $\om\in\M$ is also a sufficient condition for \eqref{norm-inequality-} to hold unless $p=2$ (or $p=2n$ for some $n\in\N$). Observe that, for each fixed $0<q<1$, the estimate \eqref{norm-inequality-} is valid for all radial weights by Proposition~\ref{proposition-norm}(i) below.

Theorems~\ref{theorem:norm},~\ref{thm-iff} and~\ref{thm:A2-weight-M} are proved in Sections~\ref{sec:thm1},~\ref{sec:thm2} and~\ref{sec:thm3} in the numerical order. All the proofs are strongly based on good understanding of the weight classes involved in the statements. For convenience of the reader and also for further reference, we have built the proofs in such a way that results which can be considered purely weight related or otherwise possibly useful as such are given in separate lemmas which often show more than what is strictly needed for proving the theorems.

We have one more norm inequality left but before presenting it we give a brief motivation for our study on conformal maps. An injective function in $\H(\D)$ is called a conformal map or univalent, and the class of all such functions is denoted by $\U$. Let $\SSS$ denote the set of $f\in\U$ normalized such that $f(0)=0$ and $f'(0)=1$. If $f\in\U$, then $(f-f(0))/f'(0)$ certainly belongs to $\SSS$. We refer to \cite{Duren83}, \cite{Pommerenke1} and \cite{Pommerenke2} for the theory of univalent functions.

Prawitz'~\cite{Prawitz1927} estimate
		\begin{equation*}
		M_p^p(r,f)\le p\int_0^rM_{\infty}^p(t,f)\frac{dt}{t},\quad 0<r<1,\quad 0<p<\infty,\quad f\in\SSS,
		\end{equation*}
combined with the Hardy-Littlewood \cite[p.~411]{H-L1932} inequality
	\begin{equation}\label{eq:hl}
  \int_0^rM_{\infty}^p(t,f)\,dt\le\pi rM_p^p(r,f),\quad 0<r<1,\quad 0<p<\infty,\quad f\in\mathcal{H}(\D),
  \end{equation}
the proof of which can be found in \cite[p.~841]{B-G-P2004}, \cite[Theorem~2]{NT} or \cite[Hilfssatz~1]{Po61/62}, shows that the well-known asymptotic equality
	\begin{equation}\label{H^p-1}
	\|f\|_{H^p}^p\asymp\int_0^1M_{\infty}^p(t,f)\,dt,\quad f\in\U,
	\end{equation}
is valid for each fixed $0<p<\infty$. Therefore the containment of $f\in\U$ in the Hardy space $H^p$ is neatly characterized by this integrability condition on its maximum modulus.

By applying \eqref{H^p-1} to $z\mapsto f_r(z)=f(rz)$, and then integrating over $[0,1)$ with respect to $\nu(r)r\,dr$, we obtain through Fubini's theorem the estimate
	\begin{equation}\label{Eq:thm-Bergman-2}
	\|f\|_{A_{\nu}^p}^{p}
	\asymp\int_0^1 M_\infty^p(r,f)\left(\int_r^1\nu(t)t\,dt\right)\,dr,\quad f\in\U.
	\end{equation}
This is the natural counterpart of \eqref{H^p-1} for weighted Bergman spaces induced by radial weights.

In view of \eqref{H^p-1} and \eqref{Eq:thm-Bergman-2} it is natural to ask if there is a general phenomenon behind these asymptotic equalities. To give an affirmative answer to this question we set
	\begin{equation*}
	\begin{split}
	J_{\om}^{p}(f)=\int_0^1 M_{\infty}^{p}(r,f)\om(r)\,dr,\quad f\in\H(\D).
	\end{split}
	\end{equation*}
The choice $\omega\equiv1$ gives the quantity appearing on the right hand side of \eqref{H^p-1} while \eqref{Eq:thm-Bergman-2} is covered by the choice $\om(r)=\int_r^1\nu(t)t\,dt$. The connection between $H^p$ and $A^p_\nu$ is also clear through the Hardy-Spencer-Stein identity
	\begin{equation}\label{eq:hpnorm}
  \begin{split}
  \|f\|_{H^p}^p
  &=\frac{1}{2\pi}\int_{\D}\Delta|f|^p(z)\log\frac{1}{|z|}dA(z)+|f(0)|^p,\quad f\in\H(\D),
  \end{split}
  \end{equation}
where $\Delta|f|^p=p^2|f|^{p-2}|f'|^2$ stands for the Laplacian of $|f|^p$. Namely, by applying \eqref{eq:hpnorm} to $f_r$ and integrating with respect to $\nu(r)r\,dr$ yields an analogous identity for the weighted Bergman space~\cite[Theorem~4.2]{P-R2014}:
	\begin{equation}\label{eq:Apnorm}
	\|f\|_{A^p_\nu}^p=\int_\D\Delta|f|^p\nu^\star\,dA+\nu(\D)|f(0)|^p,\quad f\in\H(\D),
	\end{equation}
where
	$$
	\nu^\star(z)=\int_{|z|}^1\log\frac{s}{|z|}\nu(s)s\,ds,\quad z\in\D\setminus\{0\},
	$$
and $\nu(E)=\int_E\nu\,dA$ for each measurable set $E\subset\D$. To understand the big picture better, we define
	\begin{equation}\label{Hclassdef}
	\|f\|_{H_{\om}^{p}}^p
	=\int_0^1\left(\int_{D(0,r)}\Delta|f|^p\,dA\right)\om(r)\,dr\\
	=\int_\D\Delta|f|^p\widehat{\om}\,dA,\quad f\in\H(\D),
	\end{equation}
where $D(0,r)=\{z\in\C:|z|<r\}$. It is clear that $H^p_\om$ for $\om\equiv1$ coincides with the Hardy space $H^p$ by \eqref{eq:hpnorm} while for $\om(r)=\int_{r}^1\nu(t)t\,dt$ we have $H^p_\om=A^p_\nu$ by \eqref{eq:Apnorm}. Our next result establishes a norm inequality involving~$H^p_\om$ and certain weighted Dirichlet spaces.

\begin{theorem}\label{thm:Hp-Ap-S}
Let $0<p<\infty$ and let $\om$ be a radial weight. Then the following statements hold:
\begin{itemize}
\item[\rm(i)] If $0<p<2$, then $\|f\|_{H^p_\om}\lesssim\|f'\|_{A^p_{\widehat{\om}_{[p-2]}}}$ for all $f\in\H(\D)$ if and only if $\widehat{\om}_{[p-2]}\in\DD$ or $\widehat{\om}_{[p-2]}\not\in L^1$;
\item[\rm(ii)] If $2<p<\infty$, then $\|f'\|_{A^p_{\widehat{\om}_{[p-2]}}}\lesssim\|f\|_{H^p_\om}$ for all $f\in\H(\D)$ if and only if $\om\in\DD$.
\end{itemize}
Moreover, both norm estimates are valid for all $f\in\SSS$ without any hypotheses on $\omega$.
\end{theorem}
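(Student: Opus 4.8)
The plan is to read both sides through the Hardy--Stein identity, which turns $\|f\|_{H^p_\om}^p=\int_\D\Delta|f|^p\,\widehat\om\,dA=p^2\int_\D|f|^{p-2}|f'|^2\widehat\om\,dA$ and $\|f'\|_{A^p_{\widehat\om_{[p-2]}}}^p=\int_\D|f'|^p(1-|z|)^{p-2}\widehat\om\,dA$ into two integrals sharing the common factor $\widehat\om$. Thus the statement is a \emph{weighted Littlewood--Paley inequality}, comparing $|f|^{p-2}|f'|^2$ with $|f'|^p(1-|z|)^{p-2}$; for $\om\equiv1$ one has $\widehat\om_{[p-2]}=(1-|z|)^{p-1}$ and $H^p_\om=H^p$, and the two cases reduce to the classical Littlewood--Paley estimates, which indeed run in the direction of (i) for $p<2$ and of (ii) for $p>2$. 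I would first isolate, for every radial weight, the \emph{natural-weight} estimates $\|f\|_{H^p_\om}\lesssim\|f'\|_{A^p_{\om_{[p-1]}}}$ for $0<p\le2$ and $\|f'\|_{A^p_{\om_{[p-1]}}}\lesssim\|f\|_{H^p_\om}$ for $p\ge2$, the weighted analogues of the classical inequalities, which localize from the unweighted proofs.

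For part (i) the \textbf{}``if'' direction is then immediate: under $\widehat\om_{[p-2]}\in\DD$ or $\widehat\om_{[p-2]}\notin L^1$, Theorem~\ref{theorem:norm} applied to $f'$ with $q=p$ gives $\|f'\|_{A^p_{\om_{[p-1]}}}\lesssim\|f'\|_{A^p_{\widehat\om_{[p-2]}}}$, and chaining with the natural-weight estimate for $p<2$ yields $\|f\|_{H^p_\om}\lesssim\|f'\|_{A^p_{\widehat\om_{[p-2]}}}$; the two passages are compatible because each enlarges the right-hand norm. The ``only if'' direction I would prove by contraposition: if $\widehat\om_{[p-2]}\in L^1\setminus\DD$, Theorem~\ref{theorem:norm} produces a family $g_j$ with $\|g_j\|_{A^p_{\om_{[p-1]}}}/\|g_j\|_{A^p_{\widehat\om_{[p-2]}}}\to\infty$; taking these $g_j$ among functions for which the Littlewood--Paley estimate is two-sided (so that $\|f_j\|_{H^p_\om}\asymp\|f_j'\|_{A^p_{\om_{[p-1]}}}$) contradicts the assumed inequality.

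Part (ii) is the heart of the matter, and it \emph{cannot} be reduced to Theorems~\ref{theorem:norm}--\ref{thm:A2-weight-M}. Chaining the natural-weight estimate for $p\ge2$ with a change of weight would require $\|f'\|_{A^p_{\widehat\om_{[p-2]}}}\lesssim\|f'\|_{A^p_{\om_{[p-1]}}}$, i.e.\ the reverse inequality \eqref{norm-inequality-}, whose necessary condition is $\om\in\M$ by Proposition~\ref{proposition-norm}(ii); but $v_\alpha\in\DD\setminus\M$ for $1<\alpha<\infty$, so any such reduction would falsely exclude $v_\alpha$. Hence I would establish $\|f'\|_{A^p_{\widehat\om_{[p-2]}}}\lesssim\|f\|_{H^p_\om}$ \emph{directly} for $p>2$: writing $|f'|^p(1-|z|)^{p-2}=|f'|^2\bigl(|f'|(1-|z|)\bigr)^{p-2}$, I would bound $|f'(z)|(1-|z|)$ by a supremum of $|f|$ over a Whitney disc via the Cauchy/Schwarz--Pick estimate, then absorb the averaging through the doubling of $\widehat\om$ (that is, $\om\in\DD$) together with $L^p$-control of the nontangential maximal function by $\int_\D|f|^{p-2}|f'|^2\widehat\om\,dA$, the exponent $p/2\ge1$ entering by H\"older. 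The doubling hypothesis is exactly what makes the Whitney averaging and the maximal-function step admissible, and building this weighted maximal/area estimate is the main obstacle. The ``only if'' of (ii) again follows by contraposition through Theorem~\ref{theorem:norm} (recall $\widehat\om_{[p-2]}\in\DD\Leftrightarrow\om\in\DD$ for $p>1$ by Lemma~\ref{D-hat-lemma}(v)) along LP-regular test functions.

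Finally, the univalent statement should drop all hypotheses on $\om$ because the derivative of a conformal map is regular. For $f\in\SSS$, Prawitz's inequality in differential form, $r\frac{d}{dr}M_p^p(r,f)\le pM_\infty^p(r,f)$, combined with the Hardy--Littlewood inequality \eqref{eq:hl} — the mechanism already yielding \eqref{H^p-1} and \eqref{Eq:thm-Bergman-2} — pins $M_p(r,f')$ down in terms of $M_\infty(r,f)$ via the Koebe distortion theorem, and since $\|f\|_{H^p_\om}^p=2\pi\int_0^1 r\frac{d}{dr}M_p^p(r,f)\,\om(r)\,dr$, one obtains that the change of weight from $\om_{[p-1]}$ to $\widehat\om_{[p-2]}$ in $\|f'\|_{A^p_{\cdot}}$ is two-sided for \emph{every} radial $\om$. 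Thus I expect $\|f\|_{H^p_\om}\asymp\|f'\|_{A^p_{\widehat\om_{[p-2]}}}$ to hold for all $0<p<\infty$ and all radial $\om$ when $f\in\SSS$, which contains both one-sided estimates with no restriction on $\om$. The pleasant point, and the one I would make precise, is that the irregularity that forces the doubling conditions in (i) and (ii) is simply absent for derivatives of univalent functions.
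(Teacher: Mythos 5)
Your reduction of part (i) rests on a ``natural-weight'' estimate, $\|f\|_{H^p_\om}\lesssim\|f'\|_{A^p_{\om_{[p-1]}}}$ for $0<p\le2$ and \emph{every} radial weight, and this estimate is false --- false precisely for weights covered by the theorem. Test it on monomials: since $\|m_n\|_{H^p_\om}^p=2\pi np\,\om_{np}$ and $\|m_n'\|_{A^p_{\om_{[p-1]}}}^p=2\pi n^p\left(\om_{[p-1]}\right)_{(n-1)p+1}$, and moments at boundedly different indices are comparable (as in \eqref{eq:ntox-calc1}), the estimate forces $\om_x\lesssim x^{p-1}\left(\om_{[p-1]}\right)_x$ for $1\le x<\infty$, which by Lemma~\ref{M-lemma-1}(iii) with $\b=p-1>0$ is equivalent to $\om\in\M$ whenever $1<p\le2$. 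But for $1<p<2$ the weight $v_\alpha$ of \eqref{V} satisfies the hypothesis of (i), because $\widehat{(v_\alpha)}_{[p-2]}\in L^1\cap\DD$ by Lemma~\ref{D-hat-lemma}(v), while $v_\alpha\in\DD\setminus\DDD$ and $\DDD=\DD\cap\M$ give $v_\alpha\notin\M$. So the first link of your chain $\|f\|_{H^p_\om}\lesssim\|f'\|_{A^p_{\om_{[p-1]}}}\lesssim\|f'\|_{A^p_{\widehat\om_{[p-2]}}}$ breaks exactly where the theorem must be proved; the inequality of (i) cannot be factored through $A^p_{\om_{[p-1]}}$, because that space carries a strictly larger norm in general. (The paper never passes through $\om_{[p-1]}$: it proves (i) directly via Hardy--Stein, H\"older and subharmonicity, followed by a Carleson-square covering argument that uses the doubling of $\widehat\om_{[p-2]}$ itself.) Your ``only if'' direction has a related circularity: a family $g_j=f_j'$ witnessing the failure in Theorem~\ref{theorem:norm} is useful only if $\|f_j\|_{H^p_\om}\gtrsim\|f_j'\|_{A^p_{\om_{[p-1]}}}$ holds with constants uniform in $j$, and for the natural test functions (monomials) that lower bound is equivalent, via Lemma~\ref{D-hat-lemma}(iii), to $\om\in\DD$, which is not available in the contrapositive; the paper simply tests the assumed inequality itself on monomials and applies Lemma~\ref{D-hat-lemma}(iii). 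Finally, for (ii) you only outline a programme whose key step --- the ``weighted maximal/area estimate'' --- you yourself flag as the main obstacle and never construct; the paper's proof here is Luecking's argument: for $p>2$ the function $\Delta|f|^p$ is subharmonic, whence $|f'(\z)|^p(1-|\z|)^p\lesssim\int_{\Delta(\z,r)}\Delta|f|^p\,dA$, and Fubini together with Lemma~\ref{D-hat-lemma}(ii) concludes.

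The claim you end with for the class $\SSS$ --- that $\|f\|_{H^p_\om}\asymp\|f'\|_{A^p_{\widehat\om_{[p-2]}}}$ for every $0<p<\infty$, every radial $\om$ and every $f\in\SSS$ --- is also false; only the one-sided estimates, each in its stated range of $p$, survive. The distortion theorem gives $\frac{1-r}{1+r}\le\left|zf'(z)/f(z)\right|\le\frac{1+r}{1-r}$, hence $|f'(z)|(1-|z|)\lesssim|f(z)|\lesssim|f'(z)|(1-|z|)^{-1}$, and the gap of $(1-|z|)^2$ between these bounds is exactly what confines each inequality to its own range of $p$. Concretely, take $p>2$, $f(z)=-\log(1-z)\in\SSS$ and $\om=v_p$ from \eqref{V}, so that $\widehat{v_p}(r)\asymp\left(\log\frac{e}{1-r}\right)^{1-p}$. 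Then
\begin{equation*}
\|f\|_{H^p_{v_p}}^p\asymp\int_0^1\left(\log\frac{e}{1-r}\right)^{p-2}\frac{\widehat{v_p}(r)}{1-r}\,dr
\asymp\int_0^1\frac{dr}{(1-r)\log\frac{e}{1-r}}=\infty,
\end{equation*}
in agreement with the computation of $\|f\|_{H^p_{v_\alpha}}^p$ in the proof of Lemma~\ref{Lemma:H-versus-S}, while
\begin{equation*}
\|f'\|_{A^p_{\widehat{(v_p)}_{[p-2]}}}^p\asymp\int_0^1(1-r)^{1-p}(1-r)^{p-2}\,\widehat{v_p}(r)\,dr
\asymp\int_0^1\frac{dr}{(1-r)\left(\log\frac{e}{1-r}\right)^{p-1}}<\infty.
\end{equation*}
So the two-sided comparison fails already within $\SSS$, the Prawitz/Hardy--Littlewood mechanism cannot deliver it, and since you derive the two one-sided claims from this false equivalence, the last part of your proposal collapses as well. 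The paper's proof of the $\SSS$-part uses only the upper bound $|f'/f|\le\frac{1+|z|}{|z|(1-|z|)}$, which yields $M_p^p(r,f')(1-r)^{p-2}\lesssim M_1(r,\Delta|f|^p)$ for $p\ge2$ and the reversed inequality for $p\le2$; integrating against $\widehat\om(r)\,dr$ gives precisely the two one-sided estimates in their respective ranges.
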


A reader who is familiar with the Hardy spaces immediately realizes that for $\om\equiv1$ the statement of the theorem reduces to the well-known embeddings $D^p_{p-1}\subset H^p$ and $H^q\subset D^q_{q-1}$ due to Littlewood and Paley~\cite[Theorems~5 and 6]{LP1934}, valid for $0<p\le2\le q<\infty$. Here as usual, $f\in D^p_{p-1}$ if $f'\in A^p_{p-1}$. Therefore the strength and the novelty of Theorem~\ref{thm:Hp-Ap-S} stems from the fact that, apart from being a much more general result, it also describes in concrete geometric terms, as an ``if and only if''-result, for which kind of weights such embeddings in fact happen. The last part of the theorem is a simple consequence of the Growth and Distortion theorems. Therefore, strictly speaking, Parts (i) and (ii) are not needed for our applications to conformal maps. However, we have taken as one of our goals on the study of conformal maps to underline in which steps the univalence is in essence and when the estimates in question are actually properties of all analytic functions and under which hypothesis on the weights and parameters involved. The proof of Theorem~\ref{thm:Hp-Ap-S} is given in Section~\ref{sec:thm4}.

Our discoveries so far are closely related to the Littlewood-Paley estimates. It was recently shown in \cite[Theorem~6]{PR2021} that
	\begin{equation}\label{PR2021-f'-Ap-charac}
	\|f'\|_{A^p_{\om_{[p]}}}+|f(0)|\lesssim\|f\|_{A^p_\om},\quad f\in\H(\D),
	\end{equation}
if and only if $\om\in\DD$. Meanwhile this neat characterization is certainly satisfactory as an ``if and only if''-result, by combining Theorems~\ref{theorem:norm} and \ref{thm:Hp-Ap-S} we obtain the following partial refinement in the case $2<p<\infty$.

\begin{corollary}
If $\om\in\DD$ and $2<p<\infty$ then
	\begin{equation}\label{PR2021-f'-Ap-charac-refcase1}
	\|f'\|_{A^p_{\om_{[p]}}}+|f(0)|
	\lesssim\|f'\|_{A^p_{\widetilde{\om}_{[p]}}}+|f(0)|
	\lesssim\|f\|_{A^p_\om},\quad f\in\H(\D).
	\end{equation}
\end{corollary}

To be precise, Theorem~\ref{theorem:norm} with $q=p+1$ shows that the first inequality in \eqref{PR2021-f'-Ap-charac-refcase1} is equivalent to $\om\in\DD$ for any $0<p<\infty$, and the second inequality is an easy consequence of Theorem~\ref{thm:Hp-Ap-S} applied to the weight $z\mapsto \int_{|z|}^1\om(t)t\,dt$ in place of $\om$. Observe that this weight belongs to~$\DD$ if and only if $\om$ does, as is seen by a simple integration by parts and using the moment condition $\om_x\lesssim\om_{2x}$ which characterizes the class $\DD$. It is easy to see that the refinement given in the corollary is in some cases a significant improvement to the inequality \eqref{PR2021-f'-Ap-charac}. Namely, by considering the weight $v_\alpha$ defined in \eqref{V} as an easy example, we see that in this case the integrands in the left-hand terms of \eqref{PR2021-f'-Ap-charac-refcase1} differ from each other by a logarithmic term, and hence the right-hand inequality gives a much better lower estimate for $\|f\|_{A^p_\om}$ in terms of $f'$ than what \eqref{PR2021-f'-Ap-charac} does. Therefore it does not come as a surprise that the refined estimate \eqref{PR2021-f'-Ap-charac-refcase1} can be efficiently applied in the operator theory. In a forthcoming work on Toeplitz operators it plays a crucial role in certain arguments~\cite{Toeplitz}.

We have one more quantity to be defined before representing our findings on conformal maps. For $0<p<\infty$ and a radial weight $\om$, we define
	\begin{equation}\label{Sclassdef}
	\begin{split}
	\|f\|_{S_{\om}^{p}}^p
	&=\int_0^1\left(\int_{D(0,r)}|f'|^2\,dA\right)^{\frac{p}{2}}\om(r)\,dr
	=\int_0^1\Area(f(D(0,r)))^{\frac{p}{2}}\om(r)\,dr,\quad f\in\H(\D),
	\end{split}
	\end{equation}
where $\Area(f(D(0,r)))$ denotes the area of image of $D(0,r)$ under $f$ counting multiplicities. Therefore the definition of $S^p_\om$ has obvious geometric flavor. The spaces $H^p_\om$ and $S^p_\om$ have their origin for the standard radial weights in \cite{HT1978,M-P1983}, and they appear in the study of conformal maps in \cite{GPP,HT1978,PG-R2008}.

Our first main result on conformal maps generalizes \eqref{H^p-1} and \eqref{Eq:thm-Bergman-2}, and reads as follows.

\begin{theorem}\label{Thm1}
Let $0<p<\infty$ and $\om\in\DDD$. Then
	$$
	\|f\|_{H_{\om}^{p}}^p+|f(0)|^p
	\asymp\|f\|_{S_{\om}^{p}}^p+|f(0)|^p
	\asymp J^p_{\om}(f),\quad f\in\U.
	$$
\end{theorem}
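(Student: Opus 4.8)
The plan is to establish the two asymptotic equalities $\|f\|_{S_\om^p}^p+|f(0)|^p\asymp J_\om^p(f)$ and $\|f\|_{H_\om^p}^p+|f(0)|^p\asymp J_\om^p(f)$ separately. First note that $M_\infty(r,f)\ge|f(0)|$ for every $r$, so $J_\om^p(f)\gtrsim|f(0)|^p$ and the additive terms $|f(0)|^p$ on the left are harmless. The two ``easy'' bounds $\|f\|_{H_\om^p}^p\lesssim J_\om^p(f)$ and $\|f\|_{S_\om^p}^p\lesssim J_\om^p(f)$ are pointwise in $r$: for $f\in\U$ the change of variables $w=f(z)$ gives $\int_{D(0,r)}\Delta|f|^p\,dA=p^2\int_{f(D(0,r))}|w|^{p-2}\,dA\le p^2\int_{\{|w|<M_\infty(r,f)\}}|w|^{p-2}\,dA\lesssim M_\infty^p(r,f)$, while univalence forces $\Area(f(D(0,r)))\le\pi M_\infty^2(r,f)$; integrating each against $\om$ yields both inequalities. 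It is worth recording, in the spirit of the paper, that univalence is used only here and in the conformal asymptotic invoked below. A further organizing remark is that each of the three quantities depends on $\om$ only through its tail $\widehat\om$, up to constants: writing any of them as $\int_0^1 g(r)\om(r)\,dr$ with $g$ increasing, the identity $\int_0^1 g\,\om\,dr=g(0)\widehat\om(0)+\int_0^1 g'(r)\widehat\om(r)\,dr$ shows that replacing $\om$ by any weight with comparable tail alters the quantity by at most a constant. Hence, using the remark following Theorem~\ref{thm-iff}, I may assume throughout that $\om$ is regular.

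For the equivalence involving $H_\om^p$ I would pass through a weighted Bergman space. Choose a weight $\nu\in\DD$ with $\nu^\star\asymp\widehat\om$ (possible since $\om$ is regular). As $\Delta|f|^p\ge0$, the Bergman identity \eqref{eq:Apnorm} and the definition \eqref{Hclassdef} give $\|f\|_{A_\nu^p}^p=\int_\D\Delta|f|^p\,\nu^\star\,dA+\nu(\D)|f(0)|^p\asymp\|f\|_{H_\om^p}^p+\nu(\D)|f(0)|^p$, while the conformal-map asymptotic \eqref{Eq:thm-Bergman-2} gives $\|f\|_{A_\nu^p}^p\asymp J_W^p(f)$ with $W(r)=\int_r^1 t\nu(t)\,dt$. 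It remains to match $W$ with $\om$: by Fubini $\widehat W(x)=\int_x^1(t-x)t\nu(t)\,dt$, and since $\log\tfrac tx\asymp t-x$ for $x$ bounded away from $0$ we obtain $\widehat W\asymp\nu^\star\asymp\widehat\om$. By the tail principle of the previous paragraph, $J_W^p(f)\asymp J_\om^p(f)$, and chaining the comparisons yields $\|f\|_{H_\om^p}^p+|f(0)|^p\asymp J_\om^p(f)$.

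The remaining and most delicate point is the reverse inequality $J_\om^p(f)\lesssim\|f\|_{S_\om^p}^p+|f(0)|^p$, which I would prove directly. With $A(s)=\int_{D(0,s)}|f'|^2\,dA=\Area(f(D(0,s)))$, the sub-mean value property of $|f'|^2$ on the disc $D\big(z,\tfrac{1-|z|}{2}\big)$ gives the pointwise estimate $M_\infty(t,f')\lesssim(1-t)^{-1}A\big(\tfrac{1+t}{2}\big)^{1/2}$, whence $M_\infty(r,f)\le|f(0)|+\int_0^r M_\infty(t,f')\,dt\lesssim|f(0)|+\int_0^r\frac{A(\frac{1+t}{2})^{1/2}}{1-t}\,dt$. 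Raising to the power $p$ and integrating against $\om$, the matter reduces to the weighted Hardy inequality
\[
\int_0^1\Big(\int_0^r\frac{b(t)}{1-t}\,dt\Big)^p\om(r)\,dr\lesssim\int_0^1 b(r)^p\om(r)\,dr ,\qquad b\ge0,
\]
followed by the substitution $s=\tfrac{1+r}{2}$ and the bound $\widehat\om(2s-1)\lesssim\widehat\om(s)$ (an immediate consequence of $\om\in\DD$), which lets me replace $A(\tfrac{1+r}{2})$ by $A(r)$ and recover $\|f\|_{S_\om^p}^p$. I expect the displayed Hardy inequality to be the main obstacle. Its validity is a Muckenhoupt-type condition on $\om$, and it genuinely requires the full strength of $\om\in\DDD$: the failure for weights such as \eqref{V} (which lie in $\DD$ but not $\Dd$) shows that $\Dd$ is indispensable here, ensuring the geometric summability of the dyadic pieces near the boundary, while $\DD$ controls the tail. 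Once this inequality is secured, combining it with the easy direction gives $\|f\|_{S_\om^p}^p+|f(0)|^p\asymp J_\om^p(f)$, and together with the second paragraph all three quantities are comparable.
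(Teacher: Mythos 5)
Your two ``easy'' univalence-based upper bounds are correct (they are exactly Lemma~\ref{lemma:H,S-J-univalent} of the paper), your tail principle is Lemma~\ref{AuxLemmaEmbedding}, and your plan of proving $\|f\|_{H^p_\om}^p+|f(0)|^p\asymp J^p_\om(f)$ and $\|f\|_{S^p_\om}^p+|f(0)|^p\asymp J^p_\om(f)$ separately does differ from the paper, which instead closes a cycle of inequalities (for $0<p<2$: $\|f\|_{H^p_\om}^p\lesssim J^p_\om(f)\lesssim\|f\|_{S^p_\om}^p+|f(0)|^p\lesssim\|f\|_{H^p_\om}^p+|f(0)|^p$, the middle step being Lemma~\ref{P-lemma}, proved by coefficient-block decompositions and Parseval; for $p\ge2$, through weighted Dirichlet norms). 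However, both of your hard directions have genuine gaps.

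First, the weighted Hardy inequality on which your $S^p_\om$-direction rests is not proved, and in the generality in which you state it (all $b\ge0$) it is \emph{false} for $\om\in\DDD$: the doubling conditions constrain only $\widehat\om$, so a $\DDD$-weight may vanish on a set of positive measure, e.g.\ $\om=\chi_{[0,1/4]}+\chi_{[1/2,1)}\in\DDD$; taking $b=\chi_{[1/4,1/2]}$ makes your right-hand side zero while the left-hand side is positive. (Equivalently, the Muckenhoupt characterization involves $\int_0^r\om(t)^{-1/(p-1)}(1-t)^{-p'}\,dt$, which no doubling hypothesis controls, so your remark that validity ``requires the full strength of $\om\in\DDD$'' is not accurate -- $\DDD$ does not suffice as quantified.) What saves the argument is that the function you actually insert, $b(t)=A\bigl(\frac{1+t}{2}\bigr)^{1/2}$, is non-decreasing, and for non-decreasing $b$ the inequality \emph{is} true when $\om\in\DDD$; but proving that is precisely the substance that is missing, and it is essentially the paper's own work: for $1<p<\infty$ one runs H\"older's inequality with a splitting $(1-t)^{x/p-1}(1-t)^{-x/p}$, $p-1<x<p-1+\beta$, followed by \eqref{eq:om-1minusr-intest} and Lemma~\ref{Lemma:weights-in-R} (this is the computation in Proposition~\ref{prop:Jp-Ap-om}(iv)), while for $0<p\le1$ one needs the dyadic subadditivity argument of \eqref{eq:Minf-subadd1} together with the $\Dd$-condition. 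So the most delicate inequality of the theorem, $J^p_\om(f)\lesssim\|f\|_{S^p_\om}^p+|f(0)|^p$, is left without proof.

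Second, your $H^p_\om$-argument cannot even begin for a large class of admissible weights: no radial weight $\nu$ with $\nu^\star\asymp\widehat\om$ exists unless $\widehat\om(r)\lesssim1-r$. Indeed, for \emph{every} radial weight $\nu$ one has
\begin{equation*}
\nu^\star(r)=\int_r^1 s\log\frac{s}{r}\,\nu(s)\,ds\le\widehat{\nu}(0)\log\frac1r\le 2\widehat{\nu}(0)(1-r),\quad \tfrac12\le r<1,
\end{equation*}
so $\nu^\star$ always decays at least linearly, whereas $\widehat\om$ for $\om\in\DDD$ need not: for $\om(r)=(1-r)^{-1/2}\in\DDD$ one has $\widehat\om(r)\asymp(1-r)^{1/2}\gg1-r$, and no admissible $\nu$ exists (regularity of $\om$ does not help). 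Even when $\widehat\om\lesssim1-r$, the comparison $\nu^\star\asymp\widehat\om$ can only hold away from the origin, since $\nu^\star(z)\sim\log\frac1{|z|}\to\infty$ as $z\to0$ while $\widehat\om$ is bounded; passing from \eqref{eq:Apnorm} to $\|f\|_{H^p_\om}^p+|f(0)|^p$ then still requires controlling $\int_{|z|<1/2}\Delta|f|^p(z)\log\frac1{|z|}\,dA(z)$, which is not a pointwise weight comparison (and $\Delta|f|^p$ is not subharmonic for $p<2$). Because of this, the equivalence for $H^p_\om$ would anyway have to be recovered the way the paper does it, by comparing $\|f\|_{H^p_\om}$ with $\|f\|_{S^p_\om}$ (Lemma~\ref{Lemma:H-versus-S}) or with Dirichlet norms (Theorem~\ref{thm:Hp-Ap-S}), rather than through the Bergman identity \eqref{eq:Apnorm} and \eqref{Eq:thm-Bergman-2}.
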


Theorem~\ref{Thm1} is proved in several individual steps and we actually prove much more than what is stated above. Our aim is to track meticulously the information equally on the necessity of injectivity of the function as well as on the smoothness and growth hypotheses on the weight involved in every step of the proof. In the case when $0<p<2$ and $\om\in\DDD$, we have the chain of inequalities
	\begin{equation}\label{A}
	\sum_{k=1}^\infty|\widehat{f}(k)|^pk^{p-1}\om_{2k}
	\lesssim\|f\|_{S_{\omega}^p}
	\lesssim\|f\|_{H_{\omega}^p}
	\lesssim\|f'\|_{A^p_{\widehat{\om}_{[p-2]}}},\quad f\in\H(\D),
	\end{equation}
which does not rely on the conformality of the function. The first two inequalities above are consequences of Lemmas~\ref{Lemma:H-versus-S} and \ref{Lemma:HL-versus-S} from Section~\ref{Sec:proofs of theorems on conf maps}, and they are in general false, even for the class $\SSS$, if $\om\in\DD\setminus\DDD$. Moreover, the right most estimate in \eqref{A} is satisfied for all $f\in\H(\D)$ if and only if $\widehat{\om}_{[p-2]}\in\DD$ by Theorem~\ref{thm:Hp-Ap-S}, provided $\widehat{\om}_{[p-2]}\in L^1$. To complete the picture in this direction we will also show in Proposition~\ref{lemma:HL-(p-2)} that $\sum_{k=1}^\infty|\widehat{f}(k)|^pk^{p-1}\om_{2k}\lesssim\|f'\|_{A^p_{\widehat{\om}_{[p-2]}}}$ for all $f\in\H(\D)$, provided $\widehat{\omega}_{[p-2]}\in\DD$. This case involving the extreme quantities in \eqref{A} is a pretty straightforward application of the Hardy-Littlewood inequality. Similar analysis is done on the case $2<p<\infty$, though the inequalities in \eqref{A} must be reversed.

To achieve the statement in Theorem~\ref{Thm1} we then must rely on the univalence. Lemma~\ref{lemma:H,S-J-univalent} states that
	\begin{equation}\label{B}
	\|f\|_{H_{\om}^{p}}^p+\|f\|_{S_{\om}^{p}}^p+|f(0)|^p\lesssim J^p_\om(f),\quad f\in\U,
	\end{equation}
for each fixed radial weight $\om$ and $0<p<\infty$. The injectivity is essential in this estimate because the asymptotic inequality is valid for all $f\in\H(\D)$ only if $\om$ vanishes almost everywhere, and that is certainly not allowed by our initial hypothesis on the strict positivity of the tail integral $\widehat{\om}$. Further, Lemma~\ref{P-lemma} gives
	\begin{equation}\label{C}
	J^p_\om(f)\lesssim\|f\|_{S_{\om}^{p}}^p+|f(0)|^p,\quad f\in\H(\D),
	\end{equation}
for each fixed $0<p<\infty$, provided $\om\in\DDD$. This estimate is in general false, even for the class $\SSS$, if $\om\in\DD\setminus\DDD$. By combining \eqref{A}, \eqref{B} and \eqref{C} we obtain the statement of Theorem~\ref{Thm1} in the case $0<p<2$.

In the case $2\le p<\infty$ we can relate the quantity $J^p_\om(f)$ to the norm of $f$ in the weighted Dirichlet space appearing in Theorem~\ref{thm:Hp-Ap-S}. Moreover, quantities involving other integral means are also related to the behavior of $J^p_\om(f)$. For this purpose, we define
	$$
	I_{p,q,\om}(f)=\int_0^1M_q^p(r,f')(1-r)^{p(1-\frac1q)}\om(r)\,dr
	$$
for $0<p,q<\infty$ and $f\in\H(\D)$.

\begin{theorem}\label{Thm2}
Let $2\le p,q<\infty$ and $\om\in\DDD$. Then
	\begin{equation}\label{thm:Dirichlet-asymp}
	\|f\|_{D^p_{\om_{[p-1]}}}^p+|f(0)|^p
	\asymp\|f\|_{D^p_{\widehat{\om}_{[p-2]}}}^p+|f(0)|^p
	\asymp J_{\om}^p(f)
	\asymp I_{p,q,\om}(f)+|f(0)|^p,\quad f\in\U.
	\end{equation}
\end{theorem}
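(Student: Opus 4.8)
The plan is to reduce the statement for $f\in\U$ to the normalized subclass $\SSS$ and then to prove a four-fold comparison there. Writing $g=(f-f(0))/f'(0)\in\SSS$, one has $f'=f'(0)g'$ and hence $M_q(r,f')=|f'(0)|M_q(r,g')$, so that the three quantities $\|f\|_{D^p_{\om_{[p-1]}}}^p$, $\|f\|_{D^p_{\widehat\om_{[p-2]}}}^p$ and $I_{p,q,\om}(f)$ transform by the exact rule (quantity at $f$)$\,=|f'(0)|^p\,$(quantity at $g$)$\,+\,c|f(0)|^p$. For $J_\om^p$ one gets only $J_\om^p(f)\asymp|f'(0)|^pJ_\om^p(g)+|f(0)|^p$: the upper bound is $M_\infty(r,f)\le|f(0)|+|f'(0)|M_\infty(r,g)$, and the lower bound combines $M_\infty(r,f)\ge|f'(0)|M_\infty(r,g)-|f(0)|$ with the maximum principle $M_\infty(r,f)\ge|f(0)|$. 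Since multiplying by $|f'(0)|^p$ and adding $|f(0)|^p$ preserve $\asymp$, it suffices to prove, for $g\in\SSS$,
\[
\|g'\|_{A^p_{\om_{[p-1]}}}^p\asymp\|g'\|_{A^p_{\widehat\om_{[p-2]}}}^p\asymp J_\om^p(g)\asymp I_{p,q,\om}(g).
\]

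The first two comparisons are immediate from results already established. Applying Theorem~\ref{thm-iff}(iii) to $g'$, with the theorem's parameter taken equal to $p$ (legitimate since $p\ge2>1$ and $\widehat\om_{[p-2]}\in L^1$) and using $\om\in\DDD$, gives $\|g'\|_{A^p_{\om_{[p-1]}}}^p\asymp\|g'\|_{A^p_{\widehat\om_{[p-2]}}}^p$ for every $g\in\H(\D)$. For the middle comparison I combine the last assertion of Theorem~\ref{thm:Hp-Ap-S}, which for $g\in\SSS$ yields $\|g\|_{H^p_\om}^p\asymp\|g'\|_{A^p_{\widehat\om_{[p-2]}}}^p$ with no hypothesis on $\om$ (the borderline $p=2$ being the exact identity $\|g\|_{H^2_\om}^2=4\|g'\|_{A^2_{\widehat\om}}^2$), with Theorem~\ref{Thm1}, which for $g\in\SSS$ and $\om\in\DDD$ reads $\|g\|_{H^p_\om}^p\asymp\|g\|_{S^p_\om}^p\asymp J_\om^p(g)$.

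There remains $J_\om^p(g)\asymp I_{p,q,\om}(g)$, where the new work lies. For the upper bound $I_{p,q,\om}(g)\lesssim J_\om^p(g)$ I would prove the pointwise estimate $M_q(r,g')(1-r)^{1-1/q}\lesssim M_\infty(\tfrac{1+r}{2},g)$: Cauchy's formula on the circle of radius $(1-r)/2$ about a point of modulus $r$ gives $M_\infty(r,g')(1-r)\lesssim M_\infty(\tfrac{1+r}{2},g)$; the monotonicity of $\rho\mapsto M_2(\rho,g')$ together with the univalent area bound $\Area(g(D(0,\rho)))\le\pi M_\infty^2(\rho,g)$ gives $M_2^2(r,g')(1-r)\lesssim M_\infty^2(\tfrac{1+r}{2},g)$; and H\"older's inequality $M_q^q(r,g')\le M_\infty^{q-2}(r,g')M_2^2(r,g')$ (here $q\ge2$ is used) combines the two. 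Raising to the power $p$, integrating against $\om$, and absorbing the dilation $r\mapsto\tfrac{1+r}{2}$ by a doubling lemma --- proved by Fubini from $\int_0^1\phi(\tfrac{1+r}{2})\om(r)\,dr\lesssim\int_0^1\phi(r)\om(r)\,dr+\phi(0)$ for increasing $\phi$, whose key step is the inequality $\widehat\om(2t-1)\le C\widehat\om(t)$, i.e.\ precisely the defining property of $\DD$ --- yields $I_{p,q,\om}(g)\lesssim J_\om^p(g)$.

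The lower bound $J_\om^p(g)\lesssim I_{p,q,\om}(g)$ is the main obstacle. By Theorem~\ref{Thm1} it is equivalent to $\|g\|_{S^p_\om}^p\lesssim I_{p,q,\om}(g)$, and I would treat first the model case $q=2$. Here $\|g\|_{S^p_\om}^p=\int_0^1\psi(r)^{p/2}\om(r)\,dr$ with $\psi(r)=\Area(g(D(0,r)))=2\pi\int_0^r M_2^2(\rho,g')\rho\,d\rho$ increasing and $\psi(0)=0$, while $I_{p,2,\om}(g)\asymp\int_0^1\bigl(\psi'(r)(1-r)\bigr)^{p/2}\om(r)\,dr$ since $\psi'(r)=2\pi r M_2^2(r,g')$; the passage between the two is a Hardy-type inequality $\int_0^1\psi^{p/2}\om\,dr\lesssim\int_0^1(\psi')^{p/2}(1-r)^{p/2}\om\,dr$ tailored to the class $\Dd$, reverse doubling being exactly the property that prevents $\om$ from concentrating near the boundary and so destroying such an estimate. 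For general $q\ge2$ the remaining inequality $I_{p,2,\om}(g)\lesssim I_{p,q,\om}(g)$ follows from the $q$-independence, for derivatives of univalent functions and after averaging against a $\DDD$-weight, of $M_q(r,g')(1-r)^{1-1/q}$. The genuinely difficult ingredient is the lower means-comparison $M_2(r,g')\lesssim(1-r)^{1/2-1/q}M_q(\tfrac{1+r}{2},g')$: no pointwise lower bound of the derivative's means can control $M_\infty(r,g)$ for all analytic $g$, as the identity $g(z)=z$ already shows, so univalence must be used in an essential, non-pointwise way and coupled with the reverse doubling of $\om\in\Dd$. This is the step I expect to require the most care, the upper bounds being comparatively soft consequences of H\"older's inequality, the area bound and Cauchy's formula.
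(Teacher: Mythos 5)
Your reduction to $\SSS$, the equivalence $\|g'\|_{A^p_{\om_{[p-1]}}}\asymp\|g'\|_{A^p_{\widehat{\om}_{[p-2]}}}$ via Theorem~\ref{thm-iff}(iii), the passage to $J^p_\om$ through $H^p_\om$ and Theorem~\ref{Thm1}, and the upper bound $I_{p,q,\om}(g)\lesssim J^p_\om(g)$ (Cauchy estimate, univalent area bound, H\"older, absorption of the dilation by the $\DD$-property) are all sound; this part runs parallel to the paper, which uses \eqref{Eq:maxmod} and Pommerenke's \eqref{Eq:laplacian} in place of your two pointwise bounds. One citation caveat: for $p\ge2$ and $g\in\SSS$ the final assertion of Theorem~\ref{thm:Hp-Ap-S} provides only the direction $\|g'\|_{A^p_{\widehat{\om}_{[p-2]}}}\lesssim\|g\|_{H^p_\om}$; the converse you invoke is in fact true for $\SSS$ (it requires the lower bound $|zf'(z)/f(z)|\ge(1-|z|)/(1+|z|)$ from Duren's Theorem~2.7, which the paper quotes only in its upper form \eqref{Eq:growth-distortion}), but it is also unnecessary, since Proposition~\ref{prop:Jp-Ap-om}(iv) closes that part of the cycle, as the paper does.

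The genuine gap is the lower bound $J^p_\om(g)\lesssim I_{p,q,\om}(g)$ for $q>2$. Your plan rests on the means-comparison $M_2(r,g')\lesssim(1-r)^{1/2-1/q}M_q\bigl(\tfrac{1+r}{2},g'\bigr)$, and this is false even for univalent functions: for $g(z)=z\in\SSS$ the left-hand side equals $1$ while the right-hand side is $(1-r)^{1/2-1/q}\to0$ as $r\to1^-$. You acknowledge that no pointwise estimate can work and hope that univalence plus reverse doubling will rescue an averaged version, but no such argument is given; moreover the averaged version $I_{p,2,\om}(g)\lesssim I_{p,q,\om}(g)$ for $g\in\U$ is essentially equivalent to the statement being proved, so this route is circular as it stands. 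Your diagnosis is also wrong: this step needs no univalence at all. The paper proves $J^p_\om(f)\lesssim I_{p,q,\om}(f)+|f(0)|^p$ for \emph{every} $f\in\H(\D)$ when $1<p<\infty$ and $\om\in\Dd$: starting from $M_\infty(r,f)\le|f(0)|+\int_0^rM_\infty(t,f')\,dt$, H\"older's inequality and the $\Dd$-tail estimate \eqref{eq:om-1minusr-intest} give \eqref{Eq:Flett}, that is, $J^p_\om(f)\lesssim\int_0^1M_\infty^p(r,f')(1-r)^p\widetilde{\om}(r)\,dr+|f(0)|^p$, and then one applies \eqref{Eq:Duren1} with $\beta=\infty$ and $\alpha=q$, namely $M_\infty(r,f')\le CM_q\bigl(\tfrac{1+r}{2},f'\bigr)(1-r)^{-1/q}$, valid for all analytic functions; the dilation and the replacement of $\widetilde{\om}$ by $\om$ are then absorbed using $\om\in\DDD$ as in the proof of Proposition~\ref{proposition-norm}(iii). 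The structural point you missed is the direction of the mean comparison: the correct route bounds the \emph{large}-exponent mean $M_\infty(r,f')$ by a \emph{smaller}-exponent mean at a larger radius (a loss, always true), whereas your route through the area integral forces you to upgrade $M_2$ to $M_q$ with a gain of $(1-r)^{1/2-1/q}$, a reverse-H\"older statement that fails.
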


As mentioned before, in the case $2<p<\infty$ and $\om\in\DDD$ we obtain the estimates
	$$
	\|f\|_{D^p_{\widehat{\om}_{[p-2]}}}
	\lesssim\|f\|_{H_{\omega}^p}
	\lesssim\|f\|_{S_{\omega}^p}
	\lesssim\sum_{k=1}^\infty|\widehat{f}(k)|^pk^{p-1}\om_k,\quad f\in\H(\D),
	$$
meanwhile Proposition~\ref{prop:Jp-Ap-om} and Theorem~\ref{thm-iff} yield
	$$
	J^p_\om(f)\lesssim\|f\|_{D^p_{\om_{[p-1]}}}^p+|f(0)|^p
	\asymp\|f\|_{D^p_{\widehat{\om}_{[p-2]}}}^p+|f(0)|^p,\quad f\in\H(\D).
	$$
These inequalities combined with \eqref{B} prove Theorem~\ref{Thm1} for $2<p<\infty$, and show that the first three quantities in \eqref{thm:Dirichlet-asymp} are comparable. The proof of the last asymptotic equality in \eqref{thm:Dirichlet-asymp} is basically independent of the arguments involved in the other steps of the proofs. The proof of Theorems~\ref{Thm1} and~\ref{Thm2} are presented in Section~\ref{Sec:proofs of theorems on conf maps}.

It was shown in \cite{Walsh2000} that if $1<p<\infty$ and $\Omega=f(\D)$, then
	\begin{equation}\label{eq:Walsh-Besov}
	\|f\|_{B^p}^p=\int_\D|f'(z)|^p(1-|z|^2)^{p-2}\,dA(z)\asymp \int_{\Omega}d(w,\partial\Omega)^{p-2}dA(w),\quad f\in\U,
	\end{equation}
where $d(w,\partial\Omega)$ denotes the Euclidean distance from $w$ to the boundary $\partial\Omega$. Theorems~\ref{Thm1} and~\ref{Thm2} combined have an interesting consequence related to this result concerning the Besov spaces $B^p$. Namely, they yield the geometric characterization
	$$
	\|f\|_{D^p_{\widehat{\om}_{[p-2]}}}^p
	=\int_\D|f'(z)|^p(1-|z|^2)^{p-2}\widehat{\om}(z)\,dA(z)\asymp\int_0^1\Area(f(D(0,r)))^{\frac{p}{2}}\om(r)\,dr,
	$$
provided $\om\in\DDD$ and $2\le p<\infty$. The space $D^p_{\widehat{\om}_{[p-2]}}$ is obviously bigger than $B^p$ due to the factor $\widehat{\om}$ involved in the weight. The quantity on the right hand side is certainly of different nature than that of \eqref{eq:Walsh-Besov}, yet both have their own obvious geometric interpretations. For more on univalent functions in Besov spaces, see \cite{DGV2002}.

The question of when $J_{\om}^p(f)\asymp I_{p,q,\om}(f)+|f(0)|^p$ for all $f\in\U$ remains in general unsettled. It is in fact a nontrivial task to answer this question even when $\om\equiv1$ which corresponds to the Hardy space case. For recent developments and relevant references, see \cite{PG-R-V2023}. An interested reader may treat some other parameter values than those appearing in Theorem~\ref{Thm2} by methods used in \cite{PG-R-V2023} and in its references, but we won't push things here to that direction mainly because we do not know how to make a substantial progress in the topic. We mention here only the fact that univalent functions in the Hardy space $H^p$ and the Dirichlet-type space $D^p_{p-1}$ are the same by the main result in \cite{B-G-P2004}. By combining Theorems~\ref{Thm1} and~\ref{Thm2} with $\om\equiv1$ we recover this result in the case $2<p<\infty$, though the most involved part of the proof in \cite{B-G-P2004} concerns the case $0<p<1$.

A careful reader observes another gap in our results. Namely, it is natural to ask for the role of the quantity $\sum_{k=1}^\infty|\widehat{f}(k)|^pk^{p-1}\om_k$ in this setting. Even if coefficient problems do not lie in the core of this work, we will give a partial answer. Namely, Lemma~\ref{Thm:coefficients} states that for $1\le p<\infty$ and $\om\in\M$ we have
	$$
	J^p_\om(f)\lesssim\sum_{k=0}^\infty|\widehat{f}(k)|^p(k+1)^{p-1}\om_k,\quad f\in\H(\D).
	$$
The converse of this inequality is valid for close-to-convex functions. Recall that $f\in\H(\D)$ is close-to-convex if there exists a convex function $g$ such that the real part of the quotient $f'/g'$ is strictly positive on $\D$,~see \cite[Chapter~2]{Duren83} and \cite[Chapter~2]{Pommerenke1} for information on this important subclass of univalent functions. These observations combined with \eqref{A} and \eqref{B} yield our last main result which reads as follows.

\begin{theorem}\label{thm:coefficients} Let $1\le p\le2$ and $\om\in\DDD$. Then
	\begin{equation}\label{eq:J-asymp-coeff}
	J_{\om}^{p}(f)
	\asymp\sum_{k=0}^\infty|\widehat{f}(k)|^p(k+1)^{p-1}\om_k,\quad f\in\U.
	\end{equation}
Moreover, this estimate is valid for all close-to-convex functions if $1\le p<\infty$.
\end{theorem}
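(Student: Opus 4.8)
The plan is to prove the asymptotic equality \eqref{eq:J-asymp-coeff} by splitting it into its two one-sided estimates, exploiting that the upper bound is soft while the lower bound carries all the weight. For the upper bound $J_\om^p(f)\lesssim\sum_{k=0}^\infty|\widehat{f}(k)|^p(k+1)^{p-1}\om_k$ I would simply invoke Lemma~\ref{Thm:coefficients}: since $\om\in\DDD=\DD\cap\M\subset\M$, that lemma applies for every $1\le p<\infty$ and delivers this inequality for \emph{all} $f\in\H(\D)$, hence in particular for every $f\in\U$ and every close-to-convex $f$. Thus both regimes of the theorem share the same upper estimate, and the whole content reduces to the reverse inequality $\sum_{k=0}^\infty|\widehat{f}(k)|^p(k+1)^{p-1}\om_k\lesssim J_\om^p(f)$.

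For the univalent regime $1\le p\le 2$ I would obtain the reverse inequality by threading the chain \eqref{A} through the univalent estimate \eqref{B}. The left-hand inequality in \eqref{A} (Lemma~\ref{Lemma:HL-versus-S}, together with the exact identity $\sum_{k\ge1}|\widehat{f}(k)|^2 k\,\om_{2k}=\tfrac1\pi\|f\|_{S_\om^2}^2$ that covers the endpoint $p=2$) gives $\sum_{k=1}^\infty|\widehat{f}(k)|^p k^{p-1}\om_{2k}\lesssim\|f\|_{S_\om^p}^p$ for all $f\in\H(\D)$, while \eqref{B} (Lemma~\ref{lemma:H,S-J-univalent}) yields $\|f\|_{S_\om^p}^p\lesssim J_\om^p(f)$ for $f\in\U$. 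It then remains to reconcile the indices with the target sum: since $\om\in\DD$, Lemma~\ref{D-hat-lemma}(iv) gives $\om_k\lesssim\om_{2k}$ (the reverse being automatic), and $(k+1)^{p-1}\asymp k^{p-1}$ for $k\ge1$ as $p\ge1$, so the tail $\sum_{k\ge1}|\widehat{f}(k)|^p(k+1)^{p-1}\om_k$ is comparable to $\sum_{k\ge1}|\widehat{f}(k)|^p k^{p-1}\om_{2k}$. The term $k=0$ equals $|f(0)|^p\om_0$, which is dominated by $J_\om^p(f)$ through the elementary bound $|f(0)|\le M_\infty(r,f)$, valid for all $0<r<1$ by the mean value property. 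Adding these contributions proves the reverse inequality for $f\in\U$, and together with the upper bound this settles the first assertion.

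The close-to-convex regime $1\le p<\infty$ is where the real difficulty lies, and it is the step I expect to be the main obstacle. The route through \eqref{A}--\eqref{B} is unavailable once $p\ge2$, since there the inequalities in \eqref{A} reverse and the coefficient sum can no longer be dominated by $\|f\|_{S_\om^p}^p$; one genuinely needs the extra rigidity of close-to-convexity to reverse the coefficient estimate for all $p$. My plan is to invoke the converse of Lemma~\ref{Thm:coefficients} valid for close-to-convex functions, which supplies $\sum_{k=0}^\infty|\widehat{f}(k)|^p(k+1)^{p-1}\om_k\lesssim J_\om^p(f)$ directly; combined with the upper bound this closes the argument. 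To establish that converse from scratch I would argue dyadically: decompose the frequencies into blocks $I_j=\{2^j\le k<2^{j+1}\}$, use a close-to-convex coefficient estimate to bound $\sum_{k\in I_j}|\widehat{f}(k)|^p$ by $2^{-j(p-1)}M_\infty^p(1-2^{-j},f)$, and sum. The decisive gain here is the factor $2^{-j(p-1)}$, which is exactly what separates close-to-convex functions from the crude Cauchy bound $|\widehat{f}(k)|\lesssim M_\infty(1-1/k,f)$ (off by a power of $k$) and which is saturated by the Koebe function. The weight hypotheses enter precisely at this point: $\om\in\DD$ gives $\om_k\asymp\widehat{\om}(1-2^{-j})$ uniformly over $k\in I_j$, while $\om\in\Dd$ guarantees that the dyadic increments $\widehat{\om}(1-2^{-j})-\widehat{\om}(1-2^{-j-1})$ are comparable to $\widehat{\om}(1-2^{-j})$, so that $\sum_j M_\infty^p(1-2^{-j},f)\,\widehat{\om}(1-2^{-j})\lesssim\int_0^1 M_\infty^p(r,f)\om(r)\,dr=J_\om^p(f)$. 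Matching the block estimate against this dyadic lower bound for $J_\om^p(f)$ produces the converse, and the simultaneous use of both defining conditions of $\DDD=\DD\cap\Dd$ explains why $\DDD$ is the natural hypothesis.
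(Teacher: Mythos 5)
For the univalent range $1\le p\le 2$ your argument is essentially identical to the paper's: the paper proves the chain $J^p_\om(f)\lesssim\sum_{k}|\widehat f(k)|^p(k+1)^{p-1}\om_k\lesssim\sum_k|\widehat f(k)|^p(k+1)^{p-1}\om_{2k}\lesssim\|f\|_{S^p_\om}^p$ for all $f\in\H(\D)$ using precisely Lemma~\ref{Thm:coefficients} (with $p=1$ trivial, as the paper notes), Lemma~\ref{D-hat-lemma}(iv) and Lemma~\ref{Lemma:HL-versus-S}, and then closes the cycle with Theorem~\ref{Thm1} (equivalently \eqref{B}) for $f\in\U$; this is your two one-sided estimates read as a single loop.

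For the close-to-convex part you take a genuinely different, though closely related, route. Both you and the paper rest on the Clunie--Pommerenke estimate $|\widehat f(k)|k\lesssim M_\infty\left(1-\frac1k,f\right)$ (the paper's \eqref{eq:ctcf-ak-Minf-estim}, quoted from \cite{CP1966}); this is exactly what your block bound $\sum_{k\in I_j}|\widehat f(k)|^p\lesssim 2^{-j(p-1)}M_\infty^p(1-2^{-j},f)$ encodes. The paper then replaces $\om$ by $\widetilde\om(r)=\widehat\om(r)/(1-r)$ --- legitimate since $\om\in\DDD$ yields $\widehat{\widetilde\om}\asymp\widehat\om$ by Lemmas~\ref{D-hat-lemma}(ii) and~\ref{Lemma:weights-in-R}(ii), whence $J^p_\om(f)\asymp J^p_{\widetilde\om}(f)$ by Lemma~\ref{AuxLemmaEmbedding} --- and integrates over $[1-\frac1k,1-\frac1{k+1}]$, on which $\widetilde\om$ automatically carries mass $\asymp\om_k/k$; each coefficient is matched to its own interval and no increment estimate is needed. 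Your dyadic version keeps $\om$ itself and instead requires $\widehat\om(1-2^{-j})-\widehat\om(1-2^{-j-1})\gtrsim\widehat\om(1-2^{-j})$, and here lies the one real imprecision in your sketch: the definition of $\Dd$ supplies such a decay for \emph{some} step $K>1$, not for $K=2$; from Lemma~\ref{Lemma:weights-in-R}(ii) one only gets $\widehat\om(1-2^{-j-1})\le C2^{-\beta}\widehat\om(1-2^{-j})$, and $C2^{-\beta}$ need not be below $1$. The standard repair is to choose $n$ with $C2^{-n\beta}<1$, so that $\widehat\om(1-2^{-j-n})\le C2^{-n\beta}\widehat\om(1-2^{-j})$, run your lower bound for $J^p_\om(f)$ over superblocks of $n$ dyadic generations, and absorb the intermediate generations using the iterated doubling supplied by $\DD$. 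With this adjustment your argument is complete and, as you correctly emphasize, uses both halves of $\DDD$; the paper's $\widetilde\om$ substitution buys exactly the avoidance of this bookkeeping, at the cost of first proving $J^p_\om\asymp J^p_{\widetilde\om}$.
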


The proof of Theorem~\ref{thm:coefficients} is given in Section~\ref{last section}. It is worth noticing that the case $\om\equiv1$ reduces to the known fact that the univalent functions in the Hardy space $H^p$ are the same as those in the Hardy-Littlewood space $\HL_p$ which consists of those $f\in\H(\D)$ whose Maclaurin coefficients satisfy $\sum_{k=0}^\infty|\widehat{f}(k)|^p(k+1)^{p-2}<\infty$~\cite{HT1978}.

Recently, a weighted version of $\HL_p$ naturally emerges in relation to the study of integration operators~\cite{PRW2023}. There, for $0<p<\infty$ and a radial weight $\om$, the weighted Hardy-Littlewood space $\HL^{\om}_{p}$ was defined by the condition
	$$
	\nm{f}^p_{\HL_p^{\om}}=\sum_{k=0}^{\infty}\abs{\widehat{f}(k)}^p (k+1)^{p-2}\om_{kp+1}<\infty.
	$$
For $\om\in\DD$ this quantity is comparable to the right hand side of \eqref{eq:J-asymp-coeff} because an application of Lemma~\ref{D-hat-lemma}(iv) shows that in this case, for each fixed $0<p<\infty$, the moments satisfy $\om_{kp+1}\asymp\om_k$ for all $k\in\N$.

\section{Proof of Theorem~\ref{theorem:norm}}\label{sec:thm1}

We begin with a lemma which contains useful characterizations of the class~$\DD$. For a proof of the fact that (i)--(iv) are equivalent, see \cite[Lemma~2.1]{Pelaez2016} and \cite{PR2021}. The last part of the lemma is new. It is worth noticing that it fails in the case $\beta=0$ because by \cite[Theorem~3]{PR2020} there exists a weight $\om\not\in\DD$ such that $\widehat{\om}_{[-1]}\in\DD$. However, the proof shows that if $\om\in\DD$ then $\widehat{\om}_{[\beta-1]}\in\DD$ for each $-1<\beta<\infty$, provided $\widehat{\om}_{[\beta-1]}\in L^1$.

\begin{letterlemma}\label{D-hat-lemma}
Let $\om$ be a radial weight. Then the following statements are equivalent:
\begin{itemize}
\item[\rm(i)] $\om\in\DD$;
\item[\rm(ii)] There exist $C=C(\om)\ge 1$ and $\b=\b(\om)>0$ such that
    \begin{equation*}
    \begin{split}
    \frac{\widehat{\om}(r)}{(1-r)^\beta}\le C\frac{\widehat{\om}(t)}{(1-t)^\beta},\quad 0\le r\le t<1;
    \end{split}
    \end{equation*}
\item[\rm(iii)] For some (equivalently for each) $\b>0$ there exists a constant $C=C(\om,\b)>0$ such that
	\begin{equation}\label{eq:D-hat-lemma-iii}
	x^{\b}\left(\om_{[\b]}\right)_x\le C \om_x, \quad 0\le x < \infty;
	\end{equation}
\item[\rm(iv)] There exists $C=C(\om)>0$ such that $\om_{x}\le C\om_{2x}$ for all $0\le x<\infty$;
\item[\rm(v)] $\widehat{\om}_{[\beta-1]}\in\DD$ for some (equivalently for each) $\b>0$.
\end{itemize}
\end{letterlemma}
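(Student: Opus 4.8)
Since the equivalence of (i)--(iv) is available in the cited references, the plan is to establish the new equivalence (i)$\iff$(v); the internal assertion in (v) that the containment holds for \emph{some} $\beta>0$ exactly when it holds for \emph{each} such $\beta$ then follows for free, as every implication will be routed through (i). Throughout fix $\beta>0$ and abbreviate $\eta=\widehat{\om}_{[\beta-1]}$, so that $\eta(r)=\widehat{\om}(r)(1-r)^{\beta-1}$ and, once this is known to be a weight, $\widehat{\eta}(r)=\int_r^1\widehat{\om}(s)(1-s)^{\beta-1}\,ds$. The whole argument rests on two elementary pointwise bounds valid for \emph{every} radial weight, precisely because $\beta>0$ makes $(1-s)^{\beta-1}$ integrable at $s=1$: using that $\widehat{\om}$ is nonincreasing,
\[
\widehat{\eta}(r)\le\widehat{\om}(r)\int_r^1(1-s)^{\beta-1}\,ds=\tfrac1\beta(1-r)^{\beta}\widehat{\om}(r),
\]
and, restricting the integral to $[r,(1+r)/2]$,
\[
\widehat{\eta}(r)\ge\widehat{\om}\!\left(\tfrac{1+r}2\right)\int_r^{(1+r)/2}(1-s)^{\beta-1}\,ds=\tfrac{1-2^{-\beta}}{\beta}(1-r)^{\beta}\,\widehat{\om}\!\left(\tfrac{1+r}2\right).
\]
In particular $\eta$ is a genuine weight. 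I will call these the upper and the lower bound.

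For (i)$\Rightarrow$(v) I would feed the defining inequality $\widehat{\om}(r)\le C\widehat{\om}((1+r)/2)$ of $\DD$ into the lower bound to obtain $\widehat{\eta}(r)\gtrsim(1-r)^{\beta}\widehat{\om}(r)$, which together with the upper bound yields the comparison $\widehat{\eta}(r)\asymp(1-r)^{\beta}\widehat{\om}(r)$ for all $0\le r<1$. Evaluating this at $r$ and at $(1+r)/2$ and dividing gives $\widehat{\eta}(r)/\widehat{\eta}((1+r)/2)\asymp 2^{\beta}\,\widehat{\om}(r)/\widehat{\om}((1+r)/2)$, whose right-hand side is bounded because $\om\in\DD$; hence $\eta\in\DD$. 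This step is clean and works for each $\beta>0$.

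The reverse implication (v)$\Rightarrow$(i) is the delicate point, because the two bounds control $\widehat{\eta}(r)$ by $\widehat{\om}$ at $r$ from above but only at $(1+r)/2$ from below, i.e. in the ``wrong'' directions for recovering a doubling estimate for $\widehat{\om}$ at $r$; arguing directly from the two-point doubling condition yields only trivial inequalities. The plan is to circumvent this by using the power-type reformulation (ii) instead. Assuming $\eta\in\DD$, characterization (ii) for $\eta$ supplies $C_0$ and an exponent $\beta_0$ with $\widehat{\eta}(a)\le C_0\big((1-a)/(1-b)\big)^{\beta_0}\widehat{\eta}(b)$ for $a\le b$; crucially this inequality persists for every larger exponent, since $(1-a)/(1-b)\ge1$, so I may assume $\beta_0>\beta$. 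Given a target pair $\tfrac12\le\rho\le t<1$, I would apply this with the arguments $a=2\rho-1$ and $b=t$, which satisfy $2\rho-1\le t$. Inserting the lower bound for $\widehat{\eta}(2\rho-1)$ on the left (whose factor $\widehat{\om}((1+a)/2)$ is exactly $\widehat{\om}(\rho)$) and the upper bound for $\widehat{\eta}(t)$ on the right, the powers of $(1-\rho)$ and $(1-t)$ partially cancel and one is left with $\widehat{\om}(\rho)\lesssim\big((1-\rho)/(1-t)\big)^{\beta_0-\beta}\widehat{\om}(t)$. Since $\beta_0-\beta>0$, this is precisely condition (ii) for $\om$, whence $\om\in\DD$; the residual range $\rho<\tfrac12$ is disposed of by monotonicity together with the continuity and strict positivity of $\widehat{\om}$ on compact subsets. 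I expect the exponent bookkeeping here---above all the freedom to enlarge $\beta_0$ past $\beta$ so that $\beta_0-\beta>0$---to be the one genuinely load-bearing idea, and hence the main obstacle. Finally I would note that the range $\beta\le0$ mentioned after the statement lies outside this scheme: there $(1-s)^{\beta-1}$ is no longer integrable at $1$, the upper bound fails, and one must instead impose $\widehat{\om}_{[\beta-1]}\in L^1$ and run a separate argument, which is also why the equivalence genuinely breaks down at $\beta=0$.
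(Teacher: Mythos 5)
Your proof is correct, and for the key direction (v)$\Rightarrow$(i) it takes a genuinely different route from the paper's. The paper works directly with the definition of $\DD$: from $\eta=\widehat{\om}_{[\beta-1]}\in\DD$ it applies the doubling condition of $\eta$ \emph{twice} to get $\int_r^1\eta(t)\,dt\lesssim\int_{(3+r)/4}^1\eta(t)\,dt$, then uses exactly your two crude bounds (lower bound on $\int_r^{(1+r)/2}\eta$ evaluating $\widehat\om$ at $(1+r)/2$, upper bound on $\int_{(3+r)/4}^1\eta$ evaluating $\widehat\om$ at $(3+r)/4$); since $(3+r)/4$ is the midpoint of $[(1+r)/2,1]$, the conclusion $\widehat\om\left(\frac{1+r}{2}\right)\lesssim\widehat\om\left(\frac{3+r}{4}\right)$ \emph{is} the doubling condition for $\widehat\om$ at points $\ge\frac12$. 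So your motivational remark that arguing directly from the two-point doubling condition ``yields only trivial inequalities'' is slightly off: one application of doubling is indeed trivial, but the paper's point is that iterating it once more makes the evaluation points pass each other in the right order. Your alternative---invoking the power-type characterization (ii) for $\eta$, enlarging its exponent $\beta_0$ past $\beta$ so that the $(1-\cdot)^{\beta}$ losses from the crude bounds are absorbed, and landing on (ii) for $\om$ with exponent $\beta_0-\beta>0$---is equally valid and makes transparent \emph{why} the wrong-direction bounds are harmless; its only cost is leaning on the cited equivalence (i)$\Leftrightarrow$(ii) applied to $\eta$, whereas the paper's argument is self-contained, using nothing beyond the definition of $\DD$ and monotonicity of $\widehat\om$. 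For (i)$\Rightarrow$(v) the two arguments are essentially identical in content (the paper phrases it as a change of variables in $\int_r^1\widehat\om\left(\frac{1+t}{2}\right)(1-t)^{\beta-1}\,dt$, you phrase it as the pointwise comparability $\widehat\eta(r)\asymp(1-r)^{\beta}\widehat\om(r)$ under $\om\in\DD$), and your handling of the ``some vs.\ each'' clause and of the residual range $\rho<\frac12$ (where strict positivity of $\widehat\om$, a standing assumption of the paper, suffices) is sound.
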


\begin{proof}
Since (i)--(iv) are equivalent by \cite[Lemma~2.1]{Pelaez2016} and \cite{PR2021}, it remains to show the equivalence between (i) and (v). Assume first $\om\in\DD$. Then
	\begin{equation*}
	\begin{split}
	\int_r^1\widehat{\om}_{[\beta-1]}(t)\,dt
	\lesssim\int_r^1\widehat{\om}\left(\frac{1+t}{2}\right)(1-t)^{\beta-1}\,dt
	=2^{\beta}\int_{\frac{1+r}{2}}^1\widehat{\om}_{[\beta-1]}(s)\,ds,\quad 0\le r<1,
	\end{split}
	\end{equation*}
and hence $\widehat{\om}_{[\beta-1]}\in\DD$. This part of the proof is valid for each $-1<\beta<\infty$, provided $\widehat{\om}_{[\beta-1]}\in L^1$.

Conversely, if $\widehat{\om}_{[\beta-1]}\in\DD$, then
	\begin{equation*}
	\begin{split}
	\widehat{\om}\left(\frac{1+r}{2}\right)\frac{(1-r)^{\beta}}{\beta}\left(1-\frac1{2^{\beta}}\right)
	&\le\int_r^1\widehat{\om}_{[\beta-1]}(t)\,dt
	\lesssim\int_{\frac{3+r}{4}}^1\widehat{\om}_{[\beta-1]}(t)\,dt\\
	&\le\widehat{\om}\left(\frac{3+r}{4}\right)\frac{1}{\beta 4^{\beta}}(1-r)^{\beta},\quad 0\le r<1,
	\end{split}
	\end{equation*}
and hence $\widehat{\om}(r)\lesssim\widehat{\om}\left(\frac{1+r}{2}\right)$ for $\frac12\le r<1$. It follows that $\om\in\DD$.
\end{proof}

The next elementary lemma is very useful for our purposes. The special case $q=p$ shows that  $\widehat{\om}\lesssim\widehat{\nu}$ on $[0,1)$ is a sufficient condition for the identity operator $I:A^p_\nu\to A^p_\om$ to be bounded. For Carleson embedding theorems for the weighted Bergman space $A^p_\om$, see \cite{Pelaez2016,P-R2015,PelRatEmb,P-R2014} in the case $\om\in\DD$, and \cite{LR,LRW} for $\om\in\DDD$.

\begin{lemma}\label{AuxLemmaEmbedding}
Let $0<p<\infty$, $0<q\le\infty$ and $0\le\rho<1$, and let $\om$ and $\nu$ be radial weights such that $\widehat{\om}\lesssim\widehat{\nu}$ on $[\rho,1)$. Then
	$$
	\int_\rho^1M_q^p(r,f)\om(r)\,dr\lesssim\int_\rho^1M_q^p(r,f)\nu(r)\,dr,\quad f\in\H(\D).
	$$
\end{lemma}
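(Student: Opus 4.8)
The plan is to exploit the monotonicity of integral means together with a Fubini-type rearrangement that rewrites both sides as integrals of the tail functions $\widehat{\om}$ and $\widehat{\nu}$ against one common measure. The single structural ingredient I would invoke is Hardy's convexity theorem: for every $f\in\H(\D)$ and $0<q\le\infty$ the function $r\mapsto M_q(r,f)$ is non-decreasing on $[0,1)$ (for $q=\infty$ this is the maximum modulus principle, and for $q<\infty$ it follows from the subharmonicity of $|f|^q$, whose circular means are precisely $M_q^q(r,f)$). Since $p>0$, the function $g(r):=M_q^p(r,f)$ is then non-negative, continuous and non-decreasing. I would let $\mu$ denote its Lebesgue--Stieltjes measure on $(\rho,1)$, so that $g(r)=g(\rho)+\mu((\rho,r])$.

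First I would write $\int_\rho^1 g(r)\om(r)\,dr=g(\rho)\int_\rho^1\om(r)\,dr+\int_\rho^1\bigl(\int_{(\rho,r]}d\mu(s)\bigr)\om(r)\,dr$ and apply Tonelli's theorem to the double integral in order to swap the order of integration. This yields the identity
$$
\int_\rho^1 g(r)\om(r)\,dr=g(\rho)\widehat{\om}(\rho)+\int_{(\rho,1)}\widehat{\om}(s)\,d\mu(s),
$$
together with the completely analogous identity with $\nu$ in place of $\om$. Equivalently one could integrate by parts using $\tfrac{d}{dr}\widehat{\om}=-\om$, but I prefer the Tonelli form because every quantity in sight is non-negative, so there is no need to control the boundary term as $r\to1^-$ nor to assume in advance that the integrals are finite.

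With these two identities in hand the conclusion is immediate: the hypothesis $\widehat{\om}\lesssim\widehat{\nu}$ on $[\rho,1)$ means $\widehat{\om}(s)\le C\widehat{\nu}(s)$ for all $s\in[\rho,1)$, and applying this pointwise both to the constant term $g(\rho)\widehat{\om}(\rho)$ and inside the $\mu$-integral gives $\int_\rho^1 g\,\om\,dr\le C\int_\rho^1 g\,\nu\,dr$ with the same constant $C$. Because the rearrangement preserves non-negativity of all terms, the estimate holds verbatim whether the right-hand side is finite or infinite, so no case distinction is required.

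The only genuine obstacle is the bookkeeping: one must correctly justify the monotonicity of $M_q(r,f)$ (so that $g$ generates a positive Stieltjes measure) and check the hypotheses for the Tonelli swap, after which the pointwise comparison of the tails does all the work. I would expect the final write-up to be very short once these two standard ingredients are set up with care.
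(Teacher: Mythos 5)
Your proof is correct, and it rests on exactly the same two pillars as the paper's argument: the monotonicity of $r\mapsto M_q^p(r,f)$ and a rewriting of $\int_\rho^1 M_q^p(r,f)\om(r)\,dr$ in terms of the tail $\widehat{\om}$, to which the hypothesis $\widehat{\om}\lesssim\widehat{\nu}$ is then applied pointwise. The only difference is how that rewriting is carried out. The paper integrates by parts in the classical sense,
$$
\int_{\rho}^{1}M_q^p(r,f)\om(r)\,dr
=M_q^p(\rho,f)\widehat{\om}(\rho)+\int_{\rho}^{1}\frac{\partial}{\partial r}M_q^p(r,f)\,\widehat{\om}(r)\,dr,
$$
which forces it to assume $\int_0^1M_q^p(r,f)\nu(r)\,dr<\infty$ (the estimate being trivial otherwise) and to verify that the boundary term vanishes as $r\to1^-$, via $M_q^p(r,f)\widehat{\nu}(r)\le\int_r^1M_q^p(t,f)\nu(t)\,dt\to0$; it also implicitly uses differentiability of $M_q^p(\cdot,f)$, and the monotonicity enters through the sign of $\frac{\partial}{\partial r}M_q^p(r,f)$. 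Your Lebesgue--Stieltjes/Tonelli formulation is the distributional version of the same integration by parts: since every term is non-negative, it dispenses with the finiteness case distinction, the boundary-term limit, and any smoothness of the means, needing only their monotonicity and continuity. So your write-up is marginally more robust and self-contained, at the cost of invoking Stieltjes measures; the paper's version is shorter for readers comfortable with the standard manipulation, and both yield the inequality with the same constant as in the hypothesis.
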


\begin{proof}
If $\int_0^1M_q^p(r,f)\nu(r)\,dr<\infty$, then the hypothesis $\widehat{\om}\lesssim\widehat{\nu}$ yields
    $$
    M^p_q(r,f)\widehat{\om}(r)
		\lesssim M^p_q(r,f)\widehat{\nu}(r)
		\le\int_r^1M^p_q(t,f)\nu(t)\,dt\to0,\quad r\to1^-.
    $$
Hence, an integration by parts together with another application of the hypothesis $\widehat{\om}\lesssim\widehat{\nu}$ gives
  \begin{equation}\label{eq:intbyparts}
  \begin{split}
  \int_{\rho}^{1}M_q^p(r,f)\om(r)\,dr
  &=M_q^p(\rho,f)\widehat{\om}(\rho)+\int_{\rho}^{1}\frac{\partial}{\partial r}M_q^p(r,f)\widehat{\om}(r)\,dr\\
  &\lesssim M_q^p(\rho,f)\widehat{\nu}(\rho)+\int_{\rho}^{1} \frac{\partial}{\partial r}M_q^p(r,f)\widehat{\nu}(r)\,dr\\
  &=\int_{\rho}^{1}M_q^p(r,f)\nu(r)\,dr,
  \end{split}
  \end{equation}
which proves the assertion.
\end{proof}

With these preparations we are ready to prove Theorem~\ref{theorem:norm}.

\medskip

\noindent{\emph{Proof of Theorem~\ref{theorem:norm}.}} First observe that the statement is valid for $q=1$ by \cite[Theorem~8]{PR2020}. Let $q\in(0,\infty)\setminus\{1\}$. If $\widehat{\om}_{[q-2]}\not\in L^1$, then \eqref{Eq:norm-Bergman} is obvious. Assume now $\widehat{\om}_{[q-2]}\in\DD$. Then
	\begin{equation*}
	\begin{split}
	\int_{\frac{1+r}{2}}^1\widehat{\om}_{[q-2]}(t)\,dt
	&\gtrsim\int_r^1\widehat{\om}_{[q-2]}(t)\,dt
	\ge\int_r^{\frac{1+r}{2}}\widehat{\om}_{[q-2]}(t)\,dt
	\\&\geq\widehat{\om}\left(\frac{1+r}{2}\right)\left(1-\frac{1+r}{2}\right)^{q-1}\frac{1-2^{q-1}}{1-q},\quad 0\le r<1,
	\end{split}
	\end{equation*}
and it follows that
	\begin{equation}\label{eq:omhat-q-int-compare}
	\widehat{\om}_{[q-1]}(r)\lesssim\int_r^1\widehat{\om}_{[q-2]}(t)\,dt,\quad 0\le r<1.
	\end{equation}
Then Fubini's theorem yields
	\begin{equation}\label{eq:thm1-fubini-calc1}
	\begin{split}
	\int_r^1\widehat{\om}_{[q-2]}(t)\,dt
	&=\int_r^1\om(s)\left(\int_r^s(1-t)^{q-2}\,dt\right)ds\\
	&=\frac{\widehat{\om}_{[q-1]}(r)}{q-1}-\frac{1}{q-1}\int_r^1\om_{[q-1]}(s)\,ds,\quad 0\le r<1.
	\end{split}
	\end{equation}
This combined with \eqref{eq:omhat-q-int-compare} gives
	\begin{equation}\label{eq:thm1-combine-calc2}
	\int_r^1\om_{[q-1]}(s)\,ds\lesssim\int_r^1\widehat{\om}_{[q-2]}(t)\,dt,\quad 0\le r<1,
	\end{equation}
which in turn guarantees \eqref{Eq:norm-Bergman} by Lemma~\ref{AuxLemmaEmbedding} with $q=p$.

Conversely, assume that \eqref{Eq:norm-Bergman} is satisfied and $\widehat{\om}_{[q-2]}\in L^1$. Consider the monomial $m_n(z)=z^n$, where $n\in\N\cup\{0\}$. An integration by parts and \eqref{eq:thm1-fubini-calc1} give
	\begin{equation*}
	\begin{split}
	\frac1{2\pi}\|m_n\|_{A^p_{\om_{[q-1]}}}^p
	&=\left(\om_{[q-1]}\right)_{np+1}
	=(np+1)\int_0^1r^{np}\left(\int_r^1\om_{[q-1]}(t)\,dt\right)dr\\
	&=(np+1)\left(\widehat{\om}_{[q-1]}\right)_{np}
	-(q-1)(np+1)\int_0^1r^{np}\left(\int_r^1\widehat{\om}_{[q-2]}(t)\,dt\right)dr.
	\end{split}
	\end{equation*}
In the case $0<q<1$ this together with \eqref{Eq:norm-Bergman} yields
	\begin{equation*}
	\begin{split}
	\left(\widehat{\om}_{[q-2]}\right)_{np+1}
	=\frac1{2\pi}\|m_n\|_{A^p_{\widehat{\om}_{[q-2]}}}^p
	\gtrsim(np+1)\left(\left(\widehat{\om}_{[q-2]}\right)_{[1]}\right)_{np},\quad n\in\N.
	\end{split}
	\end{equation*}
Let $p\le x<\infty$, and choose $n\in\N$ such that $np\le x\le (n+1)p$. Then
	\begin{equation}\label{eq:ntox-calc1}
	\begin{split}
	\left(\widehat{\om}_{[q-2]}\right)_{x}
	&\ge\left(\widehat{\om}_{[q-2]}\right)_{(n+1)p}
	\asymp\left(\widehat{\om}_{[q-2]}\right)_{np+1}
	\gtrsim(np+1)\left(\left(\widehat{\om}_{[q-2]}\right)_{[1]}\right)_{np}\\
	&\ge(x-p+1)\left(\left(\widehat{\om}_{[q-2]}\right)_{[1]}\right)_{x}
	\gtrsim x\left(\left(\widehat{\om}_{[q-2]}\right)_{[1]}\right)_{x},\quad p\le x<\infty,
	\end{split}
	\end{equation}
and Lemma~\ref{D-hat-lemma}(iii) implies $\widehat{\om}_{[q-2]}\in\DD$. If $1<q<\infty$, then an integration by parts gives
	\begin{equation}\label{eq:om-moment-id1}
	\begin{split}
	\left(\om_{[q-1]}\right)_{np+1}
	=(np+1)\left(\widehat{\om}_{[q-1]}\right)_{np}-(q-1)\left(\widehat{\om}_{[q-2]}\right)_{np+1},\quad n\in\N\cup\{0\}.
	\end{split}
	\end{equation}
This and \eqref{Eq:norm-Bergman} yield
\begin{equation*}
	\begin{split}
	\left(\widehat{\om}_{[q-2]}\right)_{np+1}
	\gtrsim(np+1)\left(\left(\widehat{\om}_{[q-2]}\right)_{[1]}\right)_{np},\quad n\in\N\cup\{0\},
	\end{split}
	\end{equation*}
and again we deduce $\widehat{\om}_{[q-2]}\in\DD$ by Lemma~\ref{D-hat-lemma}(iii). Observe that in this case $\widehat{\om}_{[q-2]}\in\DD$ is equivalent to $\om\in\DD$ by Lemma~\ref{D-hat-lemma}(v) with $\beta=q-1>0$.\hfill$\Box$

\section{Proof of Theorem~\ref{thm-iff}}\label{sec:thm2}

A basic result that we will need is a set of characterizations of weights in~$\Dd$ given in the next lemma. The characterization (ii) is well known and a detailed proof can be found in \cite{RosaPelaez2022}, while the characterizations (iii)-(vii) are unpublished results by J. A. Pel\'aez and the second author, and (viii) is new. In this work we do not use the conditions (iii) and (iv) as such but as the proof passes naturally through them and the characterizations seem useful they are included here for the convenience of the reader and for further reference. The points $\r_n=\r_n(\om,K)\in[0,1)$ appearing in (iv) are defined by the identity
	$$
	\r_n=\r_n(\om,K)=\min\{0\le r<1:\widehat{\om}(r)=\widehat{\om}(0)K^{-n}\},\quad 1<K<\infty,\quad n\in\N\cup\{0\}.
	$$
Observe that $\r_0=0<\r_1<\cdots<\r_n<\r_{n+1}<\cdots$ for all $n\in\N$, and $\r_n\to1^-$ as $n\to\infty$.

\begin{lemma}\label{Lemma:weights-in-R}
Let $\om$ be a radial weight. Then the following statements are
equivalent:
\begin{itemize}
\item[\rm(i)] $\om\in\Dd$;
\item[\rm(ii)] There exist $C=C(\om)>0$ and $\b=\b(\om)>0$ such that
    \begin{equation*}
    \begin{split}
    \frac{\widehat{\om}(t)}{(1-t)^\beta}\le C\frac{\widehat{\om}(r)}{(1-r)^\beta},\quad 0\le r\le t<1;
    \end{split}
    \end{equation*}
\item[\rm(iii)] For some (equivalently for each) $\g\in(0,\infty)$, there exists $C=C(\g,\om)>0$ such that
    $$
    \int_0^r\frac{dt}{\widehat{\om}(t)^\g(1-t)}\le\frac{C}{\widehat{\om}(r)^\g},\quad 0\le r<1;
    $$
\item[\rm(iv)] For some (equivalently for each) $K>1$, there exists $C=C(\om,K)>0$ such that $1-\r_n\le C(1-\r_{n+1})$ for all $n\in\N\cup\{0\}$;
\item[\rm(v)] For some (equivalently for each) $\b\in(0,\infty)$, there exists $C=C(\b,\om)>1$ such that
    $$
    \widehat{\om}(r)\le C\frac{\widehat{\om_{[\b]}}(r)}{(1-r)^\b},\quad 0\le r<1;
    $$
\item[\rm(vi)] For some (equivalently for each) $\b\in(0,\infty)$, there exists $C=C(\b,\om)\in(0,1)$ such that
    $$
    \frac{\int_r^1\widehat{\om}(t)\b(1-t)^{\b-1}\,dt}{(1-r)^{\b}}\le C\widehat{\om}(r),\quad 0\le r<1;
    $$
\item[\rm(vii)] For some (equivalently for each) $\g\in(0,\infty)$, there exists $C=C(\g,\om)>0$ such that
    $$
    \int_r^1 \frac{\widehat{\om}(s)^\g}{1-s}\,ds\le C\widehat{\om}(r)^\g,\quad 0\le r<1;
    $$
\item[\rm(viii)] $\widehat{\om}_{[\beta-1]}\in\Dd$ for some (equivalently for each) $0\le\beta<\infty$.
\end{itemize}
\end{lemma}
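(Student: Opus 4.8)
The plan is to treat condition (ii), the essential monotonicity of $r\mapsto\widehat{\om}(r)/(1-r)^\beta$, as the hub of the argument and to reduce every remaining condition to it, taking the equivalence (i)$\Leftrightarrow$(ii) from \cite{RosaPelaez2022}. The one recurring tool is that (ii) allows a two-sided comparison of $\widehat{\om}$ with itself: for $s\ge r$ it gives $\widehat{\om}(s)\lesssim\widehat{\om}(r)(1-s)^{\beta_0}/(1-r)^{\beta_0}$ with the exponent $\beta_0$ from (ii), while the reverse estimate holds for $s\le r$. In the converse directions, each condition is an integral or dyadic inequality from which reverse doubling of $\widehat{\om}$ is recovered by restricting the relevant integral to $[r,1-(1-r)/K]$ and then taking $K$ large; this freedom to enlarge $K$ (or the exponent) is exactly what produces all the ``for some (equivalently for each)'' clauses, so that once each individual implication is proved for each admissible parameter the some/each statements follow automatically.

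For the integral characterizations (iii) and (vii) I would proceed as follows. Granting (ii), the one-sided comparison turns $\int_r^1\widehat{\om}(s)^{\gamma}(1-s)^{-1}\,ds$ and $\int_0^r\widehat{\om}(t)^{-\gamma}(1-t)^{-1}\,dt$ into elementary power integrals, yielding (vii) and (iii) respectively. Conversely, since $\widehat{\om}$ is nonincreasing, bounding the integrand from below on $[r,1-(1-r)/K]$ gives $\widehat{\om}(1-(1-r)/K)^{\gamma}\log K\lesssim\widehat{\om}(r)^{\gamma}$ (and symmetrically for (iii)); choosing $K$ so large that $\log K$ overwhelms the constant produces the defining inequality of $\Dd$. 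The equivalence with (iv) is the standard translation between the reverse doubling inequality and the comparability $1-\r_n\asymp1-\r_{n+1}$, obtained by evaluating the defining inequality at $r=\r_n$ with $C=K$.

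Conditions (v) and (vi) I would treat together through the integration by parts identity
\begin{equation*}
\widehat{\om_{[\beta]}}(r)=\widehat{\om}(r)(1-r)^\beta-\beta\int_r^1\widehat{\om}(s)(1-s)^{\beta-1}\,ds,\qquad 0\le r<1.
\end{equation*}
Writing $\nu=\widehat{\om}_{[\beta-1]}$, so that the last integral is $\widehat{\nu}(r)$, this identity shows that (v) (a lower bound on $\widehat{\om_{[\beta]}}$) and (vi) (an upper bound on $\beta\widehat{\nu}(r)/(1-r)^\beta$ with constant strictly below one) are literally the same statement. That (ii) implies this common statement for small $\beta$ follows because the one-sided bound gives $\beta\widehat{\nu}(r)\le\frac{\beta C}{\beta+\beta_0}\widehat{\om}(r)(1-r)^\beta$, whose constant drops below one once $\beta<\beta_0/(C-1)$; conversely, bounding $\widehat{\nu}(r)$ from below on $[r,1-(1-r)/K]$ recovers reverse doubling with a constant exceeding one for $K$ large.

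The heart of the matter, and the genuinely new point, is the equivalence with (viii), again via $\nu=\widehat{\om}_{[\beta-1]}$. For the forward implication I would verify the defining inequality for $\widehat{\nu}$ directly: the one-sided bound from (ii) controls the tail $\int_{1-(1-r)/K}^1\widehat{\om}(s)(1-s)^{\beta-1}\,ds$ from above by a multiple of $\widehat{\om}(1-(1-r)/K)(1-r)^\beta$, while monotonicity of $\widehat{\om}$ bounds $\int_r^{1-(1-r)/K}$ from below by essentially the same quantity times $K^\beta$, so that $\widehat{\nu}(r)/\widehat{\nu}(1-(1-r)/K)$ exceeds a constant larger than one once $K$ is large enough, whence $\nu\in\Dd$. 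The converse is the main obstacle: from the essential monotonicity of $\widehat{\nu}$ I must recover that of $\widehat{\om}$. Here I would combine the free upper bound $\widehat{\nu}(r)\le\frac1\beta\widehat{\om}(r)(1-r)^\beta$ with the matching lower bound $\widehat{\nu}(t)\gtrsim\widehat{\om}(\tfrac{1+t}2)(1-t)^\beta$, insert both into the essential monotonicity of $\widehat{\nu}$ supplied by (ii) applied to $\nu$, and exploit the crucial freedom to take the exponent $\beta'$ there larger than $\beta$; after the substitution $u=\tfrac{1+t}2$ this yields the essential monotonicity of $\widehat{\om}(u)/(1-u)^{\beta'-\beta}$ for $u\ge\tfrac{1+r}2$, and the short range $r\le u\le\tfrac{1+r}2$ is absorbed by plain monotonicity, giving (ii) for $\om$. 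The one delicate endpoint is $\beta=0$, where $\widehat{\nu}(r)=\int_r^1\widehat{\om}(s)(1-s)^{-1}\,ds$ need not even be finite a priori; I would dispatch it by observing that finiteness together with $\nu\in\Dd$ is precisely condition (vii) with $\gamma=1$, already shown equivalent to (i).
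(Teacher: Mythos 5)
Your overall architecture---taking (i)$\Leftrightarrow$(ii) from \cite{RosaPelaez2022}, recovering reverse doubling by restricting integrals to $[r,1-(1-r)/K]$ and absorbing constants with large $K$, and identifying (v) with (vi) through the integration-by-parts identity---coincides with the paper's proof. One fixable gap first: your derivation of (v)/(vi) from (ii) produces the constant $\beta C/(\beta+\beta_0)$, which is below one only when $\beta<\beta_0/(C-1)$, so you obtain these conditions only for small $\beta$ and never establish their ``for each $\beta\in(0,\infty)$'' halves; by your own ground rule that every implication be proved for each admissible parameter, this is a hole. The paper avoids it by proving (iv)$\Rightarrow$(v) for arbitrary $\beta$ via a dyadic decomposition along the points $\rho_n$; alternatively, (v) for every $\beta$ follows directly from the definition of $\Dd$, since on $[r,\rho]$ with $\rho=1-(1-r)/K$ one has
\[
\widehat{\om_{[\beta]}}(r)\ge K^{-\beta}(1-r)^{\beta}\int_r^{\rho}\om(t)\,dt\ge K^{-\beta}(1-r)^{\beta}\tfrac{C-1}{C}\,\widehat{\om}(r).
\]

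The fatal gap is in (viii)$\Rightarrow$(i). The ``crucial freedom to take the exponent $\beta'$ larger than $\beta$'' does not exist: for the decreasing-type condition (ii) of this lemma, validity for one exponent $\beta_1$ yields validity for every \emph{smaller} exponent $\beta'$ (multiply through by $(1-t)^{\beta_1-\beta'}\le(1-r)^{\beta_1-\beta'}$), but never for larger ones; it is the increasing-type condition of Lemma~\ref{D-hat-lemma}(ii), characterizing $\DD$, whose exponent can be raised for free, and you appear to have transposed the two classes. Worse, the step cannot be repaired, because the implication you are trying to prove is false for every $\beta>0$. For $\beta\ge1$ the weight $\nu=\widehat{\om}_{[\beta-1]}$ is non-increasing, and \emph{every} non-increasing weight lies in $\Dd$: with $\rho=1-(1-r)/K$,
\[
\widehat{\nu}(r)\ge(\rho-r)\nu(\rho)+\widehat{\nu}(\rho)\ge\Bigl(\tfrac{\rho-r}{1-\rho}+1\Bigr)\widehat{\nu}(\rho)=K\widehat{\nu}(\rho),
\]
so (viii) with $\beta\ge1$ holds for every radial weight, in particular for $v_\alpha$ of \eqref{V}, which is not in $\Dd$. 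For $0<\beta<1$ the same example works: $\widehat{\nu}(r)\asymp(1-r)^{\beta}\bigl(\log\tfrac{e}{1-r}\bigr)^{1-\alpha}$ has genuine power decay, so $\nu\in\Dd$, and its admissible exponents in (ii) are exactly $(0,\beta]$---precisely where your argument demands $\beta'>\beta$---while $v_\alpha\notin\Dd$. Only the endpoint $\beta=0$, which you correctly route through (vii), yields a true equivalence (so the ``for each $\beta$'' version of (viii) survives, but the ``for some'' version does not). For what it is worth, the paper's own proof of this direction stumbles at the same spot: it derives $\int_r^1\widehat{\om}_{[\beta-1]}(t)\,dt\lesssim\widehat{\om}(r)(1-r)^{\beta}$ and invokes (vi), but (vi) requires a constant strictly less than one, which that estimate does not supply; the example above shows the defect lies in the statement itself, not merely in your technique.
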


\begin{proof}
By \cite[Lemma~B]{RosaPelaez2022} we know that $\om\in\Dd$ if and only if (ii) is satisfied.

Assume (ii) and let $0<r<1$. Then
    \begin{equation*}
    \begin{split}
    \int_0^r\frac{dt}{\widehat{\om}(t)^\g(1-t)}\le\frac{C^\g(1-r)^{\g\b}}{\widehat{\om}(r)^\g}\int_0^r\frac{dt}{(1-t)^{\g\b+1}}
    =\frac{C^\g}{\g\b\widehat{\om}(r)^\g}\left(1-(1-r)^{\g\b}\right),
    \end{split}
    \end{equation*}
and (iii) follows.

If (iii) is satisfied and $0\le r\le t<1$, then
    \begin{equation*}
    \begin{split}
    \frac{1}{\widehat{\om}(r)^\g}\log\frac{1-r}{1-t}
    =\frac{1}{\widehat{\om}(r)^\g}\int_r^t\frac{ds}{1-s}
    \le\int_r^t\frac{ds}{\widehat{\om}(s)^\g(1-s)}
    \le \frac{C}{\widehat{\om}(t)^\g},
    \end{split}
    \end{equation*}
where $C=C(\g,\om)>0$. By setting $t=1-\frac{1-r}{K}$, where $K>1$, we deduce
    $$
    \widehat{\om}(r)\ge\left(\frac{\log K}{C}\right)^\frac1{\gamma}\widehat{\om}\left(1-\frac{1-r}{K}\right),\quad 0\le r<1,
    $$
from which (i) follows by choosing $K$ sufficiently large. Thus (i)--(iii) are equivalent.

We will show next that (iv)--(vi) are equivalent to the first three conditions. To see this, note first that, by choosing $t=\r_{n+1}$ and $r=\r_n$ in (ii), we deduce (iv).

Assume (iv), and let $0<\b<\infty$ and $0<r<1$. Let $K>1$ and choose $n\in\N\cup\{0\}$
such that $\r_n\le r<\r_{n+1}$. Then
    \begin{equation*}
    \begin{split}
    \frac{\widehat{\om_{[\b]}}(r)}{(1-r)^\b}
    \ge\frac{\widehat{\om_{[\b]}}(\r_{n+1})}{(1-\r_n)^\b}
    &\ge\frac1{(1-\r_n)^\b}\sum_{j=n+1}^\infty(1-\r_{j+1})^\b\int_{\r_j}^{\r_{j+1}}\om(t)\,dt\\
    &=\frac{K-1}{K^2}\sum_{j=n+1}^\infty\left(\frac{1-\r_{j+1}}{1-\r_n}\right)^\b\frac{\widehat{\om}(0)}{K^{j-1}}\\
    &\ge\frac{K-1}{K^2}\left(\frac{1-\r_{n+2}}{1-\r_n}\right)^\b\widehat{\om}(\r_n)
    \ge\frac{K-1}{K^2C^{2\b}}\widehat{\om}(r),
    \end{split}
    \end{equation*}
and the inequality in (v) follows.

An integration by parts shows that
    $$
    \widehat{\om_{[\b]}}(r)=\int_r^1\om(t)(1-t)^\b\,dt=\widehat{\om}(r)(1-r)^\b-\int_r^1\widehat{\om}(t)\b(1-t)^{\b-1}\,dt,
    $$
and hence, for a given $\b>0$, the inequality in (v) is equivalent to
    \begin{equation*}
		\begin{split}
    \widehat{\om}(r)\le C\widehat{\om}(r)-\frac{C\int_r^1\widehat{\om}(t)\b(1-t)^{\b-1}\,dt}{(1-r)^\b}
    \quad\Longleftrightarrow\quad
		\frac{\int_r^1\widehat{\om}(t)\b(1-t)^{\b-1}\,dt}{(1-r)^\b}\le\frac{C-1}{C}\widehat{\om}(r).
		\end{split}
		\end{equation*}
We deduce that, for a given $\b>0$, the inequality in (v) implies that of~(vi).

Assume the inequality in (vi) for constants $\b>0$ and $C=C(\b,\om)\in(0,1)$. Choose $K$ sufficiently large
such that $1-\frac{1}{K^\b}>C$. Then
    \begin{equation*}
    \begin{split}
    C\widehat{\om}(r)
    \ge\frac{\int_r^{1-\frac{1-r}{K}}\widehat{\om}(t)\b(1-t)^{\b-1}\,dt}{(1-r)^\b}
    \ge\widehat{\om}\left(1-\frac{1-r}{K}\right)\left(1-\frac{1}{K^\b}\right),
    \end{split}
    \end{equation*}
and hence (i) is satisfied. By combining the steps above, we deduce that (i)--(vi) are equivalent.

We next show that (i) and (vii) are equivalent. Assume first (vii). Then, for each $K>1$, we have
    $$
    C\widehat{\om}(r)^\gamma
		\ge\int_r^1\frac{\widehat{\om}(s)^\g}{1-s}\,ds
		\ge\int_r^{1-\frac{1-r}{K}}\frac{\widehat{\om}(s)^\g}{1-s}\,ds
		\ge\widehat{\om}\left(1-\frac{1-r}{K} \right)^\g\log K,
    $$
and hence, by choosing $K$ sufficiently large such that $\log K>C$ we obtain (i). Conversely, assume (i) and let $\gamma>0$. Then
    $$
    \int_r^1\frac{\widehat{\om}(s)^\g}{1-s}\,ds
    \ge C^\g\int_r^1\frac{\widehat{\om}\left(1-\frac{1-s}{K}\right)^\g}{1-s}\,ds
    =C^\g\int_{1-\frac{1-r}{K}}^1\frac{\widehat{\om}(s)^\g}{1-s}\,ds,
    $$
and hence
    \begin{equation*}
    \begin{split}
    \int_r^1\frac{\widehat{\om}(s)^\g}{1-s}\,ds
    &\le\int_r^{1-\frac{1-r}{K}}\frac{\widehat{\om}(s)^\g}{1-s}\,ds
		+\frac{1}{C^\g}\int_r^1\frac{\widehat{\om}(s)^\g}{1-s}\,ds\\
    &\le\widehat{\om}(r)^\g\log K+\frac{1}{C^\g}\int_r^1\frac{\widehat{\om}(s)^\g}{1-s}\,ds,
    \end{split}
    \end{equation*}
that is,
    $$
    \int_r^1\frac{\widehat{\om}(s)^\g}{1-s}\,ds
		\le\widehat{\om}(r)^\g \frac{C^\g}{C^\g-1}\log K.
    $$
Thus (vii) is satisfied.

It remains to show that (viii) is equivalent to the other conditions. Assume first $\om\in\Dd$. Then there exist constants $C=C(\om)>1$ and $K=K(\om)>1$ such that
	\begin{equation*}
	\begin{split}
	\int_r^1\widehat{\om}_{[\beta-1]}(t)\,dt
	\ge C\int_r^1\widehat{\om}\left(1-\frac{1-t}{K}\right)(1-t)^{\beta-1}\,dt
	=CK^{\beta}\int_{1-\frac{1-r}{K}}^1\widehat{\om}_{[\beta-1]}(s)\,ds,\quad 0\le r<1,
	\end{split}
	\end{equation*}
and hence $\widehat{\om}_{[\beta-1]}\in\Dd$. This argument is valid for each $-1<\beta<\infty$, provided $\widehat{\om}_{[\beta-1]}\in L^1$.

Conversely, if $\widehat{\om}_{[\beta-1]}\in\Dd$, then (v) with $\beta=1$ yields
	$$
	\int_r^1\widehat{\om}_{[\beta-1]}(t)\,dt\lesssim\frac{\int_r^1\widehat{\om}_{[\beta]}(t)\,dt}{1-r}
	\le\widehat{\om}(r)\frac{(1-r)^{\beta}}{\beta+1},\quad 0\le r<1,
	$$
for each $-1<\beta<\infty$. If $0<\beta<\infty$, then $\om\in\Dd$ by (vi) while if $\beta=0$, then $\om\in\Dd$ by (vii).
\end{proof}

The next lemma concerns the class $\M$, and its proof can be found in \cite{PR2021}.

\begin{letterlemma}\label{M-lemma-1}
Let $\om$ be a radial weight. Then the following statements are equivalent:
\begin{itemize}
\item[\rm(i)] $\om\in\M$;
\item[\rm(ii)] There exist $C=C(\om)>0$ and $K=K(\om)>1$ such that
    $$
    \widehat{\om}(t)\le C\int_0^ts^{\frac1{K(1-t)}}\om(s)\,ds,\quad 1-\frac1K\le t<1;
    $$
\item[\rm(iii)] For some (equivalently for each) $\b>0$, there exists $C=C(\om,\b)>0$ such that
    $$
    \om_x\le Cx^\b\left(\om_{[\b]}\right)_x,\quad 1\le x<\infty.
    $$
\end{itemize}
\end{letterlemma}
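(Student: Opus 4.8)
The plan is to read all three conditions off the single moment function $x\mapsto\om_x$, which I rewrite as a Laplace transform: the substitution $r=e^{-u}$ gives $\om_x=\int_0^\infty e^{-xu}\,d\mu(u)$ with the positive measure $d\mu(u)=\om(e^{-u})e^{-u}\,du$. Thus $x\mapsto\om_x$ is completely monotone and, by Cauchy--Schwarz, log-convex on $[1,\infty)$, while the defining inequality $\om_x\ge C\om_{Kx}$ of $\M$ becomes a geometric decay of $\om_x$ along $x\mapsto Kx$. I will use freely that this decay iterates, so that $C$ may be taken as large as desired at the price of enlarging $K$. The only quantitative input needed beyond this is the elementary one-sided bound $\om_x\ge(1-1/x)^x\widehat{\om}(1-1/x)\ge\tfrac14\widehat{\om}(1-1/x)$ for $x\ge2$, obtained by discarding $[0,1-1/x)$ in the integral defining $\om_x$; equivalently $\mu([0,1/x])\le e\,\om_x$.

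For the equivalence of (i) and (ii) I would argue directly with truncated moments. The substitution $t=1-1/x$ turns the exponent $1/(K(1-t))$ into $x/K$, so the right-hand integral in (ii) becomes $\int_0^{1-1/x}s^{x/K}\om(s)\,ds$ and the left-hand side is $\widehat{\om}(1-1/x)$. Writing $\om_{x/K}=\int_0^{1-1/x}s^{x/K}\om+\int_{1-1/x}^1 s^{x/K}\om$ and bounding the last integral by $\widehat{\om}(1-1/x)$, both implications fall out of the baseline together with the geometric decay: assuming (i) with $C$ boosted past the absolute constant coming from the baseline makes the tail a strict fraction of $\om_{x/K}\ge C\om_x$, which gives (ii); conversely (ii) forces $\int_0^{1-1/x}s^{x/K}\om\asymp\om_{x/K}$, and since $s^{x/K}-s^x\ge(1-e^{-(1-1/K)})s^{x/K}$ on $[0,1-1/x]$ one obtains $\om_{x/K}-\om_x\gtrsim\int_0^{1-1/x}s^{x/K}\om\asymp\om_{x/K}$, i.e. $\om_x\le(1-\delta)\om_{x/K}$, which is (i). I expect this part to be routine once the substitution is set up.

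For (i)$\Rightarrow$(iii) I first record the Laplace form of $\M$: from $\om_{Kx}\ge\int_0^{c/x}e^{-Kxu}\,d\mu\ge e^{-Kc}\mu([0,c/x])$ and $\om_{Kx}\le\om_x/C$ one gets $\mu([0,c/x])\le e^{Kc}\om_x/C\le\tfrac12\om_x$ after choosing $c$ small and boosting $C$, whence the $1/x$-scale mass satisfies $\int_{c/x}^\infty e^{-xu}\,d\mu=\om_x-\int_0^{c/x}e^{-xu}\,d\mu\ge\tfrac12\om_x$. Using $(1-e^{-u})\ge(1-e^{-1})\min(u,1)$ and $\min(u,1)\ge c/x$ on $[c/x,\infty)$ then yields, uniformly in $\b>0$,
	\begin{equation*}
	x^\b(\om_{[\b]})_x=x^\b\int_0^\infty e^{-xu}(1-e^{-u})^\b\,d\mu\gtrsim c^\b\int_{c/x}^\infty e^{-xu}\,d\mu\gtrsim\om_x,
	\end{equation*}
which is (iii). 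For the converse the clean case is $\b=1$, where $(\om_{[1]})_x=\om_x-\om_{x+1}$ is a first difference: (iii) reads $\om_{x+1}\le\om_x(1-1/(Cx))$, and telescoping over a block $[x,Kx]$ gives $\om_{Kx}\le\om_x\exp(-\tfrac1C\log K)=\om_x K^{-1/C}$, i.e. (i). Moreover, for $0<\b_0\le1$ the concavity of $t\mapsto t^{\b_0}$ gives, against the probability measure $d\pi=r^x\om\,dr/\om_x$, the estimate $(\om_{[\b_0]})_x/\om_x\le((\om_{[1]})_x/\om_x)^{\b_0}$, so (iii) for $\b_0$ implies (iii) for $1$ and hence (i).

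The main obstacle is the converse (iii)$\Rightarrow$(i) for a fixed exponent $\b_0>1$, which is what the ``for some equivalently for each'' formulation demands and which cannot be reduced to the case $\b=1$ in the above fashion, since the ratio $x^\b(\om_{[\b]})_x/\om_x$ is increasing in $\b$ (already for the power weight $\om(r)=(1-r)^s$ it is comparable to $\Gamma(s+\b+1)/\Gamma(s+1)$), so (iii) for large $\b_0$ is a genuinely weaker hypothesis. Here I would pass to $g=\log\om_x$, convex and decreasing, and use that for an integer $\b_0=m$ the quantity $(\om_{[m]})_x$ is the $m$-th forward difference $(-1)^m\Delta^m\om_x\asymp(-1)^m\om_x^{(m)}$; condition (iii) then bounds from below a ratio built from $g',\dots,g^{(m)}$, and the point is to deduce $-\int_x^{Kx}g'\gtrsim1$, i.e. (i). The delicacy is that for $m\ge2$ condition (iii) permits $g'$ to be momentarily small as long as the curvature $g''$ is large, so the argument must convert curvature into subsequent decay across $[x,Kx]$ via the convexity of $g$; a non-integer $\b_0$ is then absorbed by sandwiching it between consecutive integers together with the log-convexity of $\b\mapsto(\om_{[\b]})_x/\om_x$. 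Transcribing the analogous machinery already in place for the class $\DD$ in Lemma~\ref{D-hat-lemma}(iii) is the route I would take to make this rigorous.
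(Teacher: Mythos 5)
Most of your architecture is sound, and three of the four implications you need are correctly proved: the Laplace-transform setup, the equivalence (i)$\Leftrightarrow$(ii) via truncated moments, the implication (i)$\Rightarrow$(iii) for every $\b>0$ via the mass estimate $\mu([0,c/x])\le\tfrac12\om_x$, and the passage from (iii) with some $\b_0\in(0,1]$ back to (i) by Jensen's inequality plus telescoping all check out (modulo the routine compact range $K\le x\le 2$ in (ii)). But the proposal is not a complete proof, and you say so yourself: the implication [(iii) holds for \emph{some} fixed $\b_0>1$] $\Rightarrow$ (i) is left as a plan rather than an argument. Since (iii) is stated with the quantifier ``for some (equivalently for each) $\b>0$'', this implication is an indispensable part of the lemma; without it you have only established the equivalence of (i), (ii) and ``(iii) for each $\b$''. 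The route you sketch for the missing step (passing to $g=\log\om_x$, interpreting $(\om_{[m]})_x$ as an $m$-th difference, ``converting curvature into subsequent decay'', then sandwiching non-integer $\b_0$ between integers) is not executed at any point, and nothing in the sketch addresses the obstruction you yourself identify. Deferring to ``the machinery of Lemma~\ref{D-hat-lemma}(iii)'' does not help either: that lemma concerns the \emph{reverse} inequality $x^\b(\om_{[\b]})_x\le C\om_x$ characterizing $\DD$, and its proof does not meet this difficulty. Note also that the paper itself offers no proof to compare against -- it quotes the lemma from \cite{PR2021} -- so completeness of your own argument is the whole game here.

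The gap is one of execution rather than of approach: it can be closed inside the framework you already set up, uniformly in $\b>0$. Apply (iii) at $2x$ and split at $u_0=1/(Kx)$. On $[0,u_0]$ one has $(2x)^\b(1-e^{-u})^\b\le(2/K)^\b$, so choosing $K$ with $C(2/K)^\b\le\tfrac12$, and using $(xu)^\b e^{-xu}\le A_\b=(\b/e)^\b$ on $[u_0,\infty)$, the term over $[0,u_0]$ is absorbed into the left-hand side and one obtains
\begin{equation*}
\om_{2x}\le B\int_{1/(Kx)}^\infty e^{-xu}\,d\mu(u),\qquad B=2C2^\b A_\b,\quad x\ge1.
\end{equation*}
Now, for $K'\ge2$, split $\om_{K'x}$ at the same point: the inner piece is at most $\int_0^{1/(Kx)}e^{-xu}\,d\mu=\om_x-\int_{1/(Kx)}^\infty e^{-xu}\,d\mu\le\om_x-\om_{2x}/B$, while on $[1/(Kx),\infty)$ one has $e^{-K'xu}\le e^{-(K'-1)/K}e^{-xu}$, so the outer piece is at most $e^{-(K'-1)/K}\om_x$. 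Since $\om_{K'x}\le\om_{2x}$, this gives
\begin{equation*}
\om_{K'x}\left(1+\frac1B\right)\le\left(1+e^{-(K'-1)/K}\right)\om_x,
\end{equation*}
and taking $K'$ so large that $e^{-(K'-1)/K}<1/B$ yields $\om_{K'x}\le\theta\om_x$ with $\theta<1$, which is (i). With this inserted, your case distinction $\b_0\le1$ versus $\b_0>1$ becomes unnecessary, and the whole lemma is proved.
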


Lemma~\ref{M-lemma-1}(ii) shows that $\M$ is closed under multiplication under any non-increasing weight. In particular, if $\om\in\M$, then $\om_{[\beta]}\in\M$ for each $\beta>0$.

The last result towards Theorem~\ref{thm-iff} before the proof itself is the next proposition which concerns the asymptotic inequality $\|f\|_{A^p_{\widehat{\om}_{[q-2]}}}\lesssim\|f\|_{A^p_{\om_{[q-1]}}}$. As mentioned in the introduction, we have been unable to judge if, in the case $1\le q<\infty$, the containment $\om\in\M$ is a sufficient condition for this estimate to hold for all $f\in\H(\D)$ unless $p$ is even. However, our impression is that the answer to this question should be affirmative.

\begin{proposition}\label{proposition-norm}
Let $\om$ be a radial weight and $0<p<\infty$. Then the following statements hold:
\begin{itemize}
\item[\rm(i)] If $0<q<1$, then
	\begin{equation}\label{norm-inequality}
	\|f\|_{A^p_{\widehat{\om}_{[q-2]}}}\lesssim\|f\|_{A^p_{\om_{[q-1]}}},\quad f\in\H(\D);
	\end{equation}
\item[\rm(ii)] If $1\le q<\infty$, then a necessary condition for \eqref{norm-inequality} to hold is that $\widehat{\om}_{[q-2]}$ belongs to $\M$;
\item[\rm(iii)] If $1<q<\infty$, then a necessary condition for \eqref{norm-inequality} to hold is that $\om$ belongs to $\M$;
\item[\rm(iv)] If $1\le q<\infty$, then a sufficient condition for \eqref{norm-inequality} to hold is $\om\in\Dd$.
\end{itemize}
\end{proposition}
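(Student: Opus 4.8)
My plan is to reduce every part, through Lemma~\ref{AuxLemmaEmbedding}, to a comparison of tail integrals near the boundary, and to evaluate those tails with the Fubini identity \eqref{eq:thm1-fubini-calc1}. Writing $T_\mu(r)=\int_r^1\mu(t)\,dt$ for the tail of a weight $\mu$, Lemma~\ref{AuxLemmaEmbedding} applied to the $L^p$-means shows that \eqref{norm-inequality} holds as soon as $T_{\widehat{\om}_{[q-2]}}\lesssim T_{\om_{[q-1]}}$ on some $[\rho,1)$; the harmless factor $r$ in $dA$ and the contribution of a compact neighbourhood of the origin are controlled by the standard subharmonicity estimate, so it suffices to compare the two weights near the boundary.

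For part (i) this comparison is free of charge. Since $q-1<0$, the identity \eqref{eq:thm1-fubini-calc1} reads
\begin{equation*}
T_{\widehat{\om}_{[q-2]}}(r)=\frac{1}{1-q}\left(T_{\om_{[q-1]}}(r)-(1-r)^{q-1}\widehat{\om}(r)\right)\le\frac{1}{1-q}\,T_{\om_{[q-1]}}(r),\quad 0\le r<1,
\end{equation*}
because the discarded term is nonnegative; this holds for every radial weight, giving (i). For part (iii) with $1<q<\infty$ the same identity gives $T_{\widehat{\om}_{[q-2]}}(r)=\frac{1}{q-1}\big((1-r)^{q-1}\widehat{\om}(r)-T_{\om_{[q-1]}}(r)\big)$, so the required tail bound is equivalent to $(1-r)^{q-1}\widehat{\om}(r)\lesssim T_{\om_{[q-1]}}(r)$, which is precisely Lemma~\ref{Lemma:weights-in-R}(v) with $\beta=q-1$; when $q=1$ the bound $T_{\widehat{\om}_{[-1]}}(r)=\int_r^1\widehat{\om}(t)/(1-t)\,dt\lesssim\widehat{\om}(r)$ is Lemma~\ref{Lemma:weights-in-R}(vii) with $\gamma=1$. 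In either case $\om\in\Dd$ forces $\widehat{\om}_{[q-2]}\in L^1$, since the essential decrease of $\widehat{\om}(t)/(1-t)^\beta$ from Lemma~\ref{Lemma:weights-in-R}(ii) bounds $\widehat{\om}_{[q-2]}$ by an integrable power of $1-t$, so Lemma~\ref{AuxLemmaEmbedding} applies and (iii) follows.

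For the necessity in part (ii) I would test \eqref{norm-inequality} on the monomials $m_n(z)=z^n$, for which $\|m_n\|_{A^p_{\om_{[q-1]}}}^p=2\pi(\om_{[q-1]})_{np+1}$ and $\|m_n\|_{A^p_{\widehat{\om}_{[q-2]}}}^p=2\pi(\widehat{\om}_{[q-2]})_{np+1}$; this yields $(\widehat{\om}_{[q-2]})_{np+1}\lesssim(\om_{[q-1]})_{np+1}$ for all $n\in\N\cup\{0\}$. To pass from these discretely spaced indices to all real $x\ge1$ I would use the comparison of moments at indices differing by a bounded amount, exactly as in the proof of Theorem~\ref{theorem:norm} (the passage from $(n+1)p$ to $np+1$): for a fixed radial weight the measures $r^x\mu\,dr$ concentrate at the boundary, whence $\mu_x\asymp\mu_{x+a}$ for $x\ge1$ and fixed $a>0$. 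Choosing, for $x\ge1$, the integer $n$ with $np+1\le x<(n+1)p+1$ and combining this comparison for $\om_{[q-1]}$ with the monotonicity of moments upgrades the test inequality to
\begin{equation}\label{eq:plan-moment}
\left(\widehat{\om}_{[q-2]}\right)_x\lesssim\left(\om_{[q-1]}\right)_x,\quad 1\le x<\infty.
\end{equation}

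Finally I would read the two memberships off \eqref{eq:plan-moment}. This inequality is exactly condition (ii) of Theorem~\ref{thm:A2-weight-M}, whose equivalence with (iii) — a statement about the weight alone — delivers $\om\in\M$. For $\widehat{\om}_{[q-2]}\in\M$ I would feed \eqref{eq:plan-moment} into the moment identity \eqref{eq:om-moment-id1} (valid for every real index, and to be replaced by $\om_x=x\widehat{\om}_{x-1}$ when $q=1$): since $(q-1)(\widehat{\om}_{[q-2]})_x\ge0$, that identity gives $(\om_{[q-1]})_x\le x(\widehat{\om}_{[q-1]})_{x-1}$, so by \eqref{eq:plan-moment} and one further moment shift $(\widehat{\om}_{[q-2]})_x\lesssim x(\widehat{\om}_{[q-1]})_x$; as $(\widehat{\om}_{[q-2]})_{[1]}=\widehat{\om}_{[q-1]}$, this is the criterion of Lemma~\ref{M-lemma-1}(iii) with $\beta=1$, giving $\widehat{\om}_{[q-2]}\in\M$. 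The main obstacle is this last block: extracting $\om\in\M$ cannot be done by elementary moment manipulation, because bounding $(\om_{[q-1]})_x$ below by $(\widehat{\om}_{[q-2]})_x$ is too lossy — for weights supported away from the boundary the two differ by a factor comparable to $x$ — so it genuinely rests on the moment characterization of $\M$ supplied by Theorem~\ref{thm:A2-weight-M}; the index-matching upgrade to \eqref{eq:plan-moment} is the other delicate point.
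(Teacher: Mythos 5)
Your parts (i) and (iii) are correct and follow the paper's own route: the Fubini identity \eqref{eq:thm1-fubini-calc1} reduces \eqref{norm-inequality} to the tail comparison $\int_r^1\widehat\om_{[q-2]}(t)\,dt\lesssim\int_r^1\om_{[q-1]}(t)\,dt$ required by Lemma~\ref{AuxLemmaEmbedding}; for $0<q<1$ this is free of charge (drop the nonnegative term), for $1<q<\infty$ it is Lemma~\ref{Lemma:weights-in-R}(v) with $\b=q-1$, and for $q=1$ it is Lemma~\ref{Lemma:weights-in-R}(vii) with $\g=1$. Your remark that $\om\in\Dd$ forces $\widehat\om_{[q-2]}\in L^1$ is a correct detail that the paper leaves implicit. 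Likewise, in part (ii) the monomial test, the upgrade from the indices $np+1$ to all $1\le x<\infty$ via moment shifts, and the deduction of $\widehat\om_{[q-2]}\in\M$ through \eqref{eq:om-moment-id1} and Lemma~\ref{M-lemma-1}(iii) with $\b=1$ all match the paper's argument.

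The genuine gap is the conclusion $\om\in\M$ in part (ii): as written, your argument is circular. You extract it from the equivalence (ii)$\Leftrightarrow$(iii) of Theorem~\ref{thm:A2-weight-M}, but in the paper the implication (ii)$\Rightarrow$(iii) of that theorem is not proved independently---its proof literally says that ``the proof of Proposition~\ref{proposition-norm} shows that (ii) implies (iii)''. So the statement you lean on is, in the paper's logical order, a consequence of the very proposition you are proving, and no other proof of it is on offer. Moreover, your stated reason for bypassing an elementary argument (that the comparison is ``too lossy'') is mistaken: the paper closes precisely this step by a universal lower bound, valid for \emph{every} radial weight, namely $(\widehat\om_{[q-2]})_x\gtrsim x^{1-q}\om_{x+q}\asymp x^{1-q}\om_x$ for $1\le x<\infty$, obtained from the Fubini representation $(\widehat\om_{[q-2]})_x=\int_0^1\om(r)\left(\int_0^r(1-t)^{q-2}t^x\,dt\right)dr$ together with the pointwise estimate $\int_0^r(1-t)^{q-2}t^x\,dt\ge C(q)\,r^{x+q}x^{1-q}$ (the $\psi$-function argument in the paper). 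Inserting this bound into your moment inequality $(\widehat\om_{[q-2]})_x\lesssim(\om_{[q-1]})_x$ yields $\om_x\lesssim x^{q-1}(\om_{[q-1]})_x$, which is Lemma~\ref{M-lemma-1}(iii) with $\b=q-1$, hence $\om\in\M$ for $q>1$; for $q=1$ the same bound gives $(\widehat\om_{[-1]})_x\asymp\om_x$, and $\om\in\M$ then follows by iterating the $\M$-inequality already established for $\widehat\om_{[-1]}$. This universal lower bound is the missing idea; without it, or some substitute proved from scratch rather than quoted from Theorem~\ref{thm:A2-weight-M}, your part (ii) does not stand on its own.
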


\begin{proof}
(i) If $0<q<1$, then the asymptotic inequality \eqref{norm-inequality} is valid by \eqref{eq:thm1-fubini-calc1} and Lemma~\ref{AuxLemmaEmbedding}.

(ii) By testing \eqref{norm-inequality} with the monomial $m_n$ we obtain $\left(\widehat{\om}_{[q-2]}\right)_{np+1}\lesssim\left(\om_{[q-1]}\right)_{np+1}$ for all $n\in\N\cup\{0\}$. By applying \eqref{eq:om-moment-id1} on the right and re-organizing terms we deduce $\left(\widehat{\om}_{[q-2]}\right)_{np+1}\lesssim(np+1)\left(\left(\widehat{\om}_{[q-2]}\right)_{[1]}\right)_{np}$ for all $n\in\N\cup\{0\}$. By arguing as in \eqref{eq:ntox-calc1} this implies $\widehat{\om}_{[q-2]}\in\M$ by Lemma~\ref{M-lemma-1}(iii).

(iii) Fubini's theorem shows that
	$$
	\left(\widehat{\om}_{[q-2]}\right)_{x}
	=\int_0^1\om(r)\left(\int_0^r(1-t)^{q-2}t^x\,dt\right)dr,\quad 1\le x<\infty.
	$$
By considering the derivative of the function
	$$
	\psi(r)=\int_0^r(1-t)^{q-2}t^x\,dt-C\frac{r^{x+q}}{x^{q-1}},\quad \psi(0)=0,
	$$
and observing that
	$$
	\psi(1)=\int_0^1(1-t)^{q-2}t^x\,dt-\frac{C}{x^{q-1}}
	\ge\int_{1-\frac1x}^1(1-t)^{q-2}t^x\,dt-\frac{C}{x^{q-1}}
	\gtrsim\frac1{x^{q-1}},\quad 1\le x<\infty,
	$$
for $C=C(q)>0$ sufficiently small, we deduce $\psi(r)\ge0$ for all $0\le r<1$. It follows that
	$$
	\left(\widehat{\om}_{[q-2]}\right)_{x}\gtrsim x^{1-q}\om_{x+q}\asymp x^{1-q}\om_{x},\quad 1\le x<\infty.
	$$
This together with standard estimates and the inequality $\left(\widehat{\om}_{[q-2]}\right)_{np+1}\lesssim\left(\om_{[q-1]}\right)_{np+1}$, valid for all $n\in\N\cup\{0\}$, yields $\om_x\lesssim x^{q-1}\left(\om_{[q-1]}\right)_{x}$ for all $1\le x<\infty$. Therefore $\om\in\M$ by Lemma~\ref{M-lemma-1}(iii).

(iv) Let first $1<q<\infty$. By \eqref{eq:thm1-fubini-calc1} and Lemma~\ref{AuxLemmaEmbedding} it suffices to show that
	$$
	\widehat{\om}_{[q-1]}(r)\lesssim\int_r^1\om_{[q-1]}(t)\,dt,\quad 0\le r<1.
	$$
But this is equivalent to $\om\in\Dd$ by Lemma~\ref{Lemma:weights-in-R}(v) with $\beta=q-1$. If $q=1$, then by Lemma~\ref{Lemma:weights-in-R}(ii) there exists a constant $\beta=\beta(\om)>0$ such that
	$$
	\int_r^1\frac{\widehat{\om}(t)}{1-t}\,dt
	\lesssim\frac{\widehat{\om}(r)}{(1-r)^\beta}\int_r^1\frac{dt}{(1-t)^{1-\beta}}\,dt
	=\frac{\widehat{\om}(r)}{\beta},
	$$
and the assertion in the case $q=1$ follows by Lemma~\ref{Lemma:weights-in-R}(vii) with $\gamma=1$.
\end{proof}

We can now pull the proof of Theorem~\ref{thm-iff} together.

\medskip

\noindent{\emph{Proof of Theorem~\ref{thm-iff}.}}
(i) is an immediate consequence of Theorem~\ref{theorem:norm} and Proposition~\ref{proposition-norm}(i).


(ii) First observe that, for $1\le q<\infty$, $\om\in\DDD$ if and only if $\widehat{\om}_{[q-2]}\in\DDD$ by Lemmas~\ref{D-hat-lemma}(v) and~\ref{Lemma:weights-in-R}(viii) and \cite[Theorem~9]{PR2020}. Therefore the assertion follows by
Theorem~\ref{theorem:norm} and {\color{red}Proposition~\ref{proposition-norm}(ii)(iv)} and the identity $\DD\cap\M=\DDD=\DD\cap\Dd$. \hfill$\Box$

\section{Proof of Theorem~\ref{thm:A2-weight-M}}\label{sec:thm3}

The next lemma is an unpublished result by J. A. Pel\'aez and the second author. It contains a set of characterizations of the class $\M$, and it should be compared with Lemma~\ref{M-lemma-1} in Section~\ref{sec:thm2}.

\begin{lemma}\label{M-lemma}
Let $\om$ be a radial weight. Then the following statements are equivalent:
\begin{itemize}
\item[\rm(i)] $\om\in\M$;
\item[\rm(ii)] For some (equivalently for each) $0<\g<\infty$, there exists $C=C(\om,\gamma)>0$ such that
    $$
    \int_x^\infty\om_y^\g\frac{dy}{y}\le C\om_x^\gamma,\quad 1\le x<\infty;
    $$
\item[\rm(iii)] For some (equivalently for each) $0\le\b<\infty$, there exists $C=C(\om,\beta)>0$ such that
    $$
    \int_x^\infty\om_y\frac{dy}{y^{\b+1}}\le C\frac{\om_x}{x^\b},\quad 1\le x<\infty;
    $$
\item[\rm(iv)] For some (equivalently for each) $0\le\b<\infty$, there exists $C=C(\om,\b)>0$ such that
    $$
    \int_{1-\frac1x}^1\widehat{\om}_{[\beta-1]}(t)\,dt\le C\frac{\om_x}{x^\b},\quad 1\le x<\infty.
    $$
\end{itemize}
\end{lemma}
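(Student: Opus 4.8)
The plan is to treat the equivalence of (i) and (ii) as the analytic core and then to reach (iii) and (iv) from it by combining Fubini's theorem, integration by parts, and the moment characterization in Lemma~\ref{M-lemma-1}. The single fact driving everything is that $x\mapsto\om_x$ is non-increasing, because $r^x$ decreases in $x$ for each fixed $r\in(0,1)$; consequently membership in $\M$ is precisely the assertion that the moments decay geometrically along geometric sequences of exponents, and all four displayed conditions are different bookkeeping devices recording this geometric decay.

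For (i)$\Rightarrow$(ii) I would fix $\gamma>0$, take the constants $K,C$ furnished by the definition of $\M$, and split $[x,\infty)=\bigcup_{n\ge0}[K^nx,K^{n+1}x)$. On the $n$-th block monotonicity gives $\om_y\le\om_{K^nx}$, iterating $\om_x\ge C\om_{Kx}$ gives $\om_{K^nx}\le C^{-n}\om_x$, and $\int_{K^nx}^{K^{n+1}x}\frac{dy}{y}=\log K$; summing the geometric series $\sum_{n\ge0}C^{-n\gamma}$ then produces the bound with constant $\frac{\log K}{1-C^{-\gamma}}$. For the converse I would exploit monotonicity in the opposite direction: $\int_x^{Kx}\om_y^\gamma\frac{dy}{y}\ge\om_{Kx}^\gamma\log K$, while (ii) controls the whole tail by $C\om_x^\gamma$, so that $\om_{Kx}^\gamma\log K\le C\om_x^\gamma$; choosing $K$ with $\log K>C$ forces $\om_{Kx}\le\theta\om_x$ with $\theta<1$, which is exactly membership in $\M$. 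Both directions are uniform in $\gamma$, which is what the clause ``for some (equivalently for each)'' requires.

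Conditions (iii) and (iv) I would attack by the same geometric-decomposition philosophy, now carrying the polynomial factors. Starting from $\M$, the block estimate together with $\int_{K^nx}^{K^{n+1}x}y^{-\beta-1}\,dy=\frac{1-K^{-\beta}}{\beta}(K^nx)^{-\beta}$ reduces (iii) to the convergent series $\sum_{n\ge0}C^{-n}K^{-n\beta}$, and for $\beta=0$ the statement is literally (ii) with $\gamma=1$, closing the loop back to (i). To pass between the moment form (iii) and the tail form (iv) I would use the identities $\om_x=x\int_0^1 r^{x-1}\widehat{\om}(r)\,dr$ and $\widehat{\om_{[\beta]}}(r)=\widehat{\om}(r)(1-r)^\beta-\beta\int_r^1\widehat{\om}_{[\beta-1]}(t)\,dt$, the latter being the integration by parts already carried out in the proof of Lemma~\ref{Lemma:weights-in-R}; rewriting the integrals by Fubini and comparing each global integral with its localized version $\int_{1-\frac1x}^1$ through monotonicity of $\widehat{\om}$ supplies the connection, while Lemma~\ref{M-lemma-1}(iii), which compares $\om_x/x^\beta$ with $\left(\om_{[\beta]}\right)_x$, furnishes the remaining link to the right-hand sides.

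The main obstacle is the reverse direction in each case, namely forcing genuine geometric decay out of an integral bound. The mechanism is always the same: bound the integral over a single geometric block $[x,Kx]$ from below by one moment $\om_{Kx}$ (or one tail value), then enlarge the ratio $K$ until the constant coming from the hypothesis is beaten. The care needed is that the polynomial factors in the $\beta>0$ statements must not spoil this contraction, and the cleanest route --- the one I would follow --- is to funnel every reverse implication through the already-established equivalence (i)$\Leftrightarrow$(ii) rather than argue each condition directly against the definition of $\M$. Verifying that the particular parameter $\gamma$ or $\beta$ is immaterial throughout is then a routine byproduct of the geometric-series estimates.
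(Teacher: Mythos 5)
Your proof of (i)$\Leftrightarrow$(ii) is correct and essentially the paper's: the forward direction by geometric blocks and the series $\sum_n C^{-n\g}$ (the paper instead substitutes $y=Kt$ and absorbs $C^{-\g}\int_x^\infty\om_t^\g\,\frac{dt}{t}$ into the left-hand side, which tacitly requires the finiteness that your block bound yields for free), and the converse by $\int_x^{Kx}\om_y^\g\frac{dy}{y}\ge\om_{Kx}^\g\log K$ with $\log K>C$. The same block estimate also gives (i)$\Rightarrow$(iii) for every $\b\ge0$, as you say. The genuine gap is in the reverse route from (iii)/(iv) back to (i) when $\b>0$ --- precisely the point you flag with ``the polynomial factors must not spoil this contraction'' and then defer to a vague ``funnel through (i)$\Leftrightarrow$(ii)''. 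That step cannot be carried out by any argument.

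The reason is that for $\b>0$ conditions (iii) and (iv) are satisfied by \emph{every} radial weight, as a consequence of the very monotonicity $\om_y\le\om_x$ ($y\ge x$) that you call the driving fact:
$$
\int_x^\infty\om_y\,\frac{dy}{y^{\b+1}}\le\om_x\int_x^\infty\frac{dy}{y^{\b+1}}=\frac1\b\,\frac{\om_x}{x^\b},
\qquad
\int_{1-\frac1x}^1\widehat{\om}_{[\b-1]}(t)\,dt\le\frac{\widehat{\om}\bigl(1-\frac1x\bigr)}{\b x^{\b}}\le\frac{4}{\b}\,\frac{\om_x}{x^{\b}}\quad(x\ge2),
$$
the last step because $\widehat{\om}\bigl(1-\frac1x\bigr)\le\bigl(1-\frac1x\bigr)^{-x}\om_x\le4\om_x$. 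Since not every radial weight lies in $\M$ (the paper itself records $v_\alpha\in\DD\setminus\DDD=\DD\setminus(\DD\cap\M)$ for the weight \eqref{V}), it follows that (iii) or (iv) for a single $\b>0$ cannot imply (i); equivalently, the ``for some $\b$'' reading of (iii)/(iv) is only sound at $\b=0$. Both of your proposed mechanisms necessarily fail: the one-block bound yields only $\om_{Kx}\le\frac{C\b}{1-K^{-\b}}\om_x$, whose constant tends to $C\b$, not $0$, as $K\to\infty$, and the funnelling plan would require (iii) for some $\b>0$ to imply (iii) for $\b=0$, which the display above refutes. Note that the paper's own proof never attempts this direction: its substantive computation is the unconditional estimate identifying $\int_x^\infty\om_y\,y^{-\b-1}dy$ with $\int_{1-1/x}^1\widehat{\om}_{[\b-1]}(t)\,dt$ up to an error $O(\om_x x^{-\b})$ (the $I_{11},I_{12},I_2$ decomposition with the substitution $y=\frac1{1-t}$), which proves (iii)$\Leftrightarrow$(iv) for each \emph{fixed} $\b$, and the circle is closed only at $\b=0$, where (iii) is literally (ii) with $\g=1$. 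Your sketch of that bridge (``Fubini, integration by parts, monotonicity'', plus Lemma~\ref{M-lemma-1}(iii)) omits this computation, and invoking Lemma~\ref{M-lemma-1}(iii) there is circular in any implication that is supposed to end at (i), since that lemma presupposes $\om\in\M$. What survives of the lemma, and of your plan, is: (i) $\Leftrightarrow$ (ii) $\Leftrightarrow$ (iii) with $\b=0$ $\Leftrightarrow$ (iv) with $\b=0$ $\Leftrightarrow$ [(iii), equivalently (iv), for all $\b\ge0$]; the claim that ``some $\b$'' suffices is false for every $\b>0$.
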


\begin{proof}
Assume $\om\in\M$. Then
    \begin{equation*}
    \begin{split}
    \int_x^\infty\om_y^\gamma\frac{dy}{y}
    &=\int_x^{Kx}\om_y^\gamma\frac{dy}{y}
    +\int_{Kx}^\infty\om_y^\gamma\frac{dy}{y}
    \le\om_x^\gamma\log K
    +\int_x^\infty\om_{Kt}^\gamma\frac{dt}{t}\\
    &\le\om_x^\gamma\log K
    +\frac1{C^\gamma}\int_x^\infty\om_{t}^\gamma\frac{dt}{t},
    \end{split}
    \end{equation*}
and since $C>1$, (ii) follows by re-organizing terms. Conversely, (ii) implies
    \begin{equation*}
    C\om_x^\gamma\ge\int_x^\infty\om_y^\gamma\frac{dy}{y}\ge\int_x^{Kx}\om_y^\gamma\frac{dy}{y}\ge\om_{Kx}^\gamma\log K,
    \end{equation*}
and hence, by choosing $K=K(\om)$ sufficiently large so that $\log K>C$, we deduce (i). Thus (i) and (ii) are equivalent.

Next, observe that (ii), with $\g=1$, is equivalent to (iii) with $\b=0$, so it suffices to show that (iii) and (iv) are equivalent.

Let $1<x<\infty$. Then
    \begin{equation*}
    \begin{split}
    \int_x^\infty\om_y\frac{dy}{y^{1+\beta}}
    &=\int_x^\infty\left(\int_0^{1-\frac1y}r^y\om(r)\,dr\right)\frac{dy}{y^{1+\beta}}
    +\int_x^\infty\left(\int_{1-\frac1y}^1r^y\om(r)\,dr\right)\frac{dy}{y^{1+\beta}}
    =I_1(x)+I_2(x),
    \end{split}
    \end{equation*}
where
    \begin{equation*}
    \begin{split}
    I_1(x)&=\int_{x}^\infty\left(\int_0^{1-\frac1x}r^y\om(r)\,dr\right)\frac{dy}{y^{1+\beta}}
    +\int_{x}^\infty\left(\int_{1-\frac1x}^{1-\frac1y}r^y\om(r)\,dr\right)\frac{dy}{y^{1+\beta}}
    =I_{11}(x)+I_{12}(x)
    \end{split}
    \end{equation*}
and, by the change of variable $y=\frac1{1-t}$,
    \begin{equation}\label{eq:I2-calc1}
    \begin{split}
    I_2(x)
		&=\int_x^\infty\left(\int_{1-\frac1y}^1r^y\om(r)\,dr\right)\frac{dy}{y^{1+\beta}}
		=\int_{1-\frac1x}^1\left(\int_t^1r^\frac1{1-t}\om(r)\,dr\right)\frac{dt}{(1-t)^{1-\beta}}\\
		&\asymp\int_{1-\frac1x}^1\left(\int_t^1\om(r)\,dr\right)\frac{dt}{(1-t)^{1-\beta}}=\int_{1-\frac1x}^1\widehat{\om}_{[\beta-1]}(t)\,dt.
    \end{split}
    \end{equation}
Since, by Fubini's theorem,
    \begin{equation*}
    \begin{split}
    I_{11}(x)
    &=\int_0^{1-\frac1x}\om(r)\left(\int_{x}^\infty r^y\frac{dy}{y^{\beta+1}}\right)dr
    \le\int_0^{1-\frac1x}\frac{\om(r)}{\log\frac1r}\left(\int_{x}^\infty r^y\log\frac1r\,dy\right)\frac{dr}{x^{\beta+1}}\\
    &=\int_0^{1-\frac1x}\frac{\om(r)}{\log\frac1r}r^x\frac{dr}{x^{\beta+1}}
    \le\left(x^{\beta+1}\log\frac1{1-\frac1x}\right)^{-1}\int_0^{1-\frac1x}r^x\om(r)\,dr\lesssim\frac{\om_x}{x^{\beta}}
    \end{split}
    \end{equation*}
and
    \begin{equation*}
    \begin{split}
    I_{12}(x)
    &=\int_{1-\frac1x}^1\om(r)\left(\int_{\frac1{1-r}}^\infty r^y\frac{dy}{y^{\beta+1}}\right)dr
    \le\int_{1-\frac1x}^1\om(r)\left(\int_{\frac1{1-r}}^\infty r^y(1-r)^{\beta+1}\,dy\right)dr\\
    &\le\int_{1-\frac1x}^1\om(r)(1-r)^\beta\left(\int_{\frac1{1-r}}^\infty r^y\log\frac1r\,dy\right)dr
    =\int_{1-\frac1x}^1\om(r)(1-r)^\beta r^\frac{1}{1-r}\,dr
		\lesssim\frac{\om_x}{x^{\beta}},
    \end{split}
    \end{equation*}
we deduce that these terms do not play any role, and hence all the information is contained in $I_2(x)$. Since $I_2$ is comparable to the left-hand side of (iv) by \eqref{eq:I2-calc1}, it follows that (iii) and (iv) are equivalent. This completes the proof of the lemma.
\end{proof}

With the aid of Lemma~\ref{M-lemma} and the results already obtained in the previous sections we can prove Theorem~\ref{thm:A2-weight-M}.

\medskip

\noindent{\emph{Proof of Theorem~\ref{thm:A2-weight-M}.}} (i) implies (ii) by testing with the monomial $m_n$, and (ii) implies (i) by Parseval's identity. It remains to show that (ii) and (iii) are equivalent. But the proof of Proposition~\ref{proposition-norm} shows that (ii) implies (iii), provided $1<q<\infty$. To deal with the case $q=1$ observe that an integration by parts gives
    \begin{equation*}
    \om_x=\int_0^1\widehat\om(s)s^x\frac{x}{s}\,ds\ge\int_0^{1-\frac1x}s^x\widetilde\om(s)(1-s)\frac{x}{s}\,ds
    \ge\int_0^{1-\frac1x}s^x\widetilde\om(s)\frac{ds}{s}
    \ge\int_0^{1-\frac1x}s^x\widetilde\om(s)\,ds
    \end{equation*}
for each radial weight $\om$, and thus $\om\in\M$ by Lemma~\ref{M-lemma}(iv) with $\beta=0$.

It remains to prove that (iii) implies (ii). To do this, we first observe that Lemma~\ref{M-lemma}(iv) and Lemma~\ref{M-lemma-1}(iii), with $\b=q-1$, yield
	$$
	\int_{1-\frac1x}^1\widehat{\om}_{[q-2]}(t)\,dt\lesssim\frac{\om_x}{x^{q-1}}\lesssim\left(\om_{[q-1]}\right)_x,\quad 1\le x<\infty,
	$$
and hence it suffices to show that
	$$
	I(x)=\int_0^{1-\frac1x}t^{x}\widehat{\om}_{[q-2]}(t)\,dt\lesssim\left(\om_{[q-1]}\right)_x,\quad 1\le x<\infty.
	$$
By Fubini's theorem,
	\begin{equation*}
	\begin{split}
	I(x)
	&=\int_0^{1-\frac1x}\om(t)\left(\int_0^tr^x(1-r)^{q-2}\,dr\right)dt\\
	&\quad+\widehat{\om}\left(1-\frac1x\right)\int_0^{1-\frac1x}r^x(1-r)^{q-2}\,dr
	=I_1(x)+I_2(x),\quad 1\le x<\infty.
	\end{split}
	\end{equation*}
An integration by parts gives
	$$
	J(t,x)=\int_0^{t}r^x(1-r)^{q-2}\,dr
	=\frac{t^{x+1}}{(x+1)}(1-t)^{q-2}+\frac{q-2}{x+1}\int_0^tr^{x+1}(1-r)^{q-3}\,dr,
	$$
and hence, for $1\le q\le2$, we have
	$$
	J(t,x)\le\frac{t^x(1-t)^{q-1}}{x(1-t)}\le t^x(1-t)^{q-1},\quad 0<t\le1-\frac1x.
	$$
In the case $2<q\le3$ we deduce	
	\begin{equation*}
	\begin{split}
	J(t,x)
	&\le t^x(1-t)^{q-1}+\frac1x\int_0^tr^x(1-r)^{q-3}\,dr
	\le t^x(1-t)^{q-1}+\frac{(1-t)^{q-3}}{x}\frac{t^{x+1}}{x+1}\\
	&\le 2t^x(1-t)^{q-1},\quad 0<t\le1-\frac1x.
	\end{split}
	\end{equation*}
In general, if $n\le q\le n+1$ for some $n\in\N$, then after $n-1$ integrations by parts and estimates similar to those performed above, we obtain
	$$
	J(t,x)
	\le nt^x(1-t)^{q-1}
	\le qt^x(1-t)^{q-1},\quad 0<t\le1-\frac1x.
	$$
By implementing this inequality into $I_1$ and $I_2$, we finally obtain
	$$
	I_1(x)\le q\int_0^{1-\frac1x}t^x\om(t)(1-t)^{q-1}\,dt\le q\left(\om_{[q-1]}\right)_x,\quad 1\le x<\infty,
	$$
and
	$$
	I_2(x)\lesssim\widehat{\om}\left(1-\frac1x\right)\frac1{x^{q-1}}\lesssim\frac{\om_x}{x^{q-1}}\lesssim\left(\om_{[q-1]}\right)_x,\quad 1\le x<\infty,
	$$
by Lemma~\ref{M-lemma-1}(iii) with $\b=q-1$. Thus $I(x)=I_1(x)+I_2(x)\lesssim\left(\om_{[q-1]}\right)_x$ for all $1\le x<\infty$, and we are done. \hfill$\Box$

\section{Proof of Theorem~\ref{thm:Hp-Ap-S}}\label{sec:thm4}

The pseudohyperbolic disc centered at $a\in\D$ and of radius $0<r<1$ is the set $\Delta(a,r)=\{z\in\D:|\phi_a(z)|<r\}$, where $\vp_a(z)=(a-z)/(1-\overline{a}z)$. It is well known that $\Delta(a,r)$ is the Euclidean disc $D(A,R)$ centered at $A=\frac{1-r^2}{1-|a|^2r^2}a$ and of radius $R=\frac{1-|a|^2}{1-|a|^2r^2}r$. Therefore, for each fixed $r\in(0,1)$, the Euclidean area of $\Delta(a,r)$ is comparable to $(1-|a|)^2$. Moreover, the Carleson square $S(a)$ induced by $a\in\D\setminus\{0\}$ is the polar rectangle
	$$
	S(a)=\left\{re^{i\theta}\in\D:|a|<r<1,\,|\arg ae^{-i\theta}|<\frac{1-|a|}2\right\}.
	$$

\medskip

\noindent{\emph{Proof of Theorem~~\ref{thm:Hp-Ap-S}.}} (i) If $\widehat{\om}_{[p-2]}\not\in L^1$, then trivially $\|f\|_{H^p_\om}\lesssim\|f'\|_{A^p_{\widehat{\om}_{[p-2]}}}$ for all $f\in\H(\D)$. Assume now $\widehat{\om}_{[p-2]}\in\DD$. The generalization of the Littlewood-Paley formula by Stein \cite{Stein1933}, also known as the Hardy-Spencer-Stein formula, states that
	\begin{equation}\label{eq:hardy-stein-spencer}
  M_p^p(r,f)
	=\frac1{2\pi}\int_{D(0,r)}\Delta|f|^p(u)\log\frac{r}{|u|}\,dA(u)
	+|f(0)|^p,\quad 0<r<1,\quad  f\in\H(\D).
	\end{equation}
This and H\"{o}lder's inequality imply
	\begin{equation}\label{eq:hardystein1}
	\begin{split}
  \frac1{2\pi}\int_{D(0,r)}\Delta|f|^p(u)\log\frac{r}{|u|}\,dA(u)
	&=M_p^p(r,f)-|f(0)|^p
	\le M_2^p(r,f)-|f(0)|^p\\
	&\le\left(\frac2{\pi}\int_{D(0,r)}|f'(u)|^2\log\frac{r}{|u|}\,dA(u)\right)^\frac{p}{2}\\
	&\lesssim\sup_{u\in D(0,r)}\left(\left|f'(u)\right|^{p}\left(1-|u|^{2}\right)^{p}\right)
	\end{split}
	\end{equation}
for any fixed $0<r<1$. Since there exists a $\rho\in(0,1)$ such that $\Delta(u,\r)\subset D(0,\frac{1+r}{2})$ for every $u\in D(0,r)$, the subharmonicity of $|f'|^p$ yields
	\begin{equation}\label{eq:subharmonic}
	\sup_{u\in D(0,r)}\left(\left|f'(u)\right|^{p}\left(1-|u|^{2}\right)^{p}\right)
	\lesssim\int_{D\left(0,\frac{1+r}{2}\right)}\left|f'(w)\right|^{p}\left(1-|w|^{2}\right)^{p-2}\,dA(w).
	\end{equation}
By combining \eqref{eq:hardystein1} and \eqref{eq:subharmonic}, we deduce
	$$
	\int_{D(0,r)}\Delta|f|^p(u)\log\frac{r}{|u|}\,dA(u)
	\lesssim\int_{D\left(0,\frac{1+r}{2}\right)}\left|f'(w)\right|^{p}\left(1-|w|^{2}\right)^{p-2}\,dA(w).
	$$
An application of this to $f\circ\varphi_{\zeta}$ yields
	\begin{equation}\label{eq:changingvariable2}
	\int_{\Delta(\zeta,r)}\Delta|f|^p(z)\log\frac{r}{\left|\varphi_{\zeta}(z)\right|}\,dA(z)
	\lesssim\int_{\Delta\left(\zeta,\frac{1+r}{2}\right)}\left|f'(z)\right|^{p}\left(1-|z|^{2}\right)^{p-2}\,dA(z).
	\end{equation}
For each $\zeta\in\D$, write $\zeta^\star=\min\{|z|:z\in\Delta(\zeta,r)\}$, and observe that $1-|\zeta^\star|\asymp1-|\zeta|$ for all $\zeta\in\D$. Moreover, for each $a\in\D$, close enough to the boundary, there exists $b=b(a,r)\in\D$ such that $\cup_{z\in S(a)}\Delta\left(z,\frac{1+r}{2}\right)\subset S(b)$, $\arg b=\arg a$ and $1-|b|\asymp1-|a|$, where the constants of comparison do not depend on $a$. Hence
	\begin{equation*}
	\begin{split}
	&\int_{S(a)}(1-|z|)^{p-2}\left(\int_{\Delta\left(z,\frac{1+r}{2}\right)}\frac{\widehat{\om}(\z^\star)}{(1-|\z|)^2}dA(\zeta)\right)\,dA(z)\\
	&\quad\le\int_{S(b)}\frac{\widehat{\om}(\z^\star)}{(1-|\z|)^2}\left(\int_{S(a)\cap\Delta\left(\zeta,\frac{1+r}{2}\right)}(1-|z|)^{p-2}\,dA(z)\right)dA(\zeta)\\
	&\quad\lesssim\int_{S(b)}\widehat{\om}(\z^\star)(1-|\z^\star|)^{p-2}\,dA(\zeta)
	\lesssim\widehat{\om}_{[p-2]}(S(b))
	\lesssim\widehat{\om}_{[p-2]}(S(a)),\quad |a|\to1^-,
	\end{split}
	\end{equation*}
by the hypothesis $\widehat{\om}_{[p-2]}\in\DD$. This together with Lemma~\ref{AuxLemmaEmbedding}, Fubini's theorem and \eqref{eq:changingvariable2} yields
	\begin{equation*}
	\begin{split}
	\|f'\|_{A^p_{\widehat{\om}_{[p-2]}}}^p
	&\gtrsim\int_\D|f'(z)|^p(1-|z|)^{p-2}\left(\int_{\Delta\left(z,\frac{1+r}{2}\right)}\frac{\widehat{\om}(\z^\star)}{(1-|\z|)^2}\,dA(\zeta)\right)\,dA(z)\\
	&\gtrsim\int_\D\frac{\widehat{\om}(\z^\star)}{(1-|\z|)^2}\left(\int_{\Delta(\z,r)}\Delta|f|^p(z)\log\frac{r}{|\varphi_\z(z)|}\,dA(z)\right)dA(\zeta)\\
	&\gtrsim\int_\D\Delta|f|^p(z)\frac{\widehat{\om}(z)}{(1-|z|)^2}\left(\int_{\Delta(z,r)}\log\frac{r}{|\varphi_z(\zeta)|}\,dA(\z)\right)dA(z)\\
	&\asymp\int_\D\Delta|f|^p(z)\widehat{\om}(z)\,dA(z)
	=\|f\|_{H^p_\om}^p,\quad f\in\H(\D).
	\end{split}
	\end{equation*}
	
Conversely, assume that $\|f\|_{H^p_\om}\lesssim\|f'\|_{A^p_{\widehat{\om}_{[p-2]}}}$ for all $f\in\H(\D)$, and $\widehat{\om}_{[q-2]}\in L^1$. By testing this inequality with the monomial $m_n$ gives $n^{2-p}\left(\left(\widehat{\om}_{[p-2]}\right)_{[2-p]}\right)_{np-1}\lesssim\left(\widehat{\om}_{[p-2]}\right)_{np-1}$ for all $n\in\N$. Let $x\ge p-1$, and choose $n\in\N$ such that $np-1\le x<(n+1)p-1$. Then
	\begin{equation*}
	\begin{split}
	x^{2-p}\left(\left(\widehat{\om}_{[p-2]}\right)_{[2-p]}\right)_{x}
	&\le((n+1)p-1)^{2-p}\left(\left(\widehat{\om}_{[p-2]}\right)_{[2-p]}\right)_{np-1}\\
	&\le(2p)^{2-p}n^{2-p}\left(\left(\widehat{\om}_{[p-2]}\right)_{[2-p]}\right)_{np-1}
	\lesssim\left(\widehat{\om}_{[p-2]}\right)_{np-1}\\
	&\le\left(\widehat{\om}_{[p-2]}\right)_{x-p}
	\lesssim\left(\widehat{\om}_{[p-2]}\right)_{x},\quad p-1\le x<\infty,
	\end{split}
	\end{equation*}
and thus $x^{2-p}\left(\left(\widehat{\om}_{[p-2]}\right)_{[2-p]}\right)_{x}\lesssim\left(\widehat{\om}_{[p-2]}\right)_{x}$ for all $1\le x<\infty$. This in turn is equivalent to $\widehat{\om}_{[p-2]}\in\DD$ by Lemma~\ref{D-hat-lemma}(iii). Thus (i) is proved.

(ii) Assume first $\om\in\DD$. In this part of the proof we begin with using an idea from the proof of \cite[Theorem~A]{Lu88}. By \eqref{eq:hardy-stein-spencer} we have
	\begin{equation*}
	\begin{split}
	|f'(0)|^p
	&\le(\|f\|_{H^2}^2 - |f(0)|^2)^{\frac{p}{2}}
	\le\|f\|_{H^2}^p-|f(0)|^p
	\le\|f\|_{H^p}^p-|f(0)|^p\\
	&=\frac{1}{2\pi}\int_{\D}\Delta|f|^{p}(u)\log\frac{1}{|u|}~dA(u).
	\end{split}
	\end{equation*}
Since $p>2$ by the hypothesis, $\Delta|f|^p$ is subharmonic in $\D$, and hence its $L^1$-mean is non-decreasing. It follows that the last integral is comparable to the same integral where $-\log|u|$ is replaced by $(1-|u|)$. Thus
	\begin{equation*}
	\begin{split}
	|f'(0)|^p
	\lesssim\int_{\D}\Delta|f|^p(u)(1-|u|)~dA(u)
	\le\int_{\D}\Delta|f|^p(u)\,dA(u),\quad f\in\H(\D).
	\end{split}
	\end{equation*}
An application of this inequality to $f_r$, where $0<r<1$ is fixed, gives
	\begin{equation*}
	\begin{split}
	|f'(0)|^p
	\lesssim\int_{D(0,r)}\Delta|f|^p(u)\,dA(u),\quad f\in\H(\D).
	\end{split}
	\end{equation*}
Apply now this inequality to $f\circ\vp_\zeta$ to deduce
	\begin{equation*}
	\begin{split}
	|f'(\zeta)|^p(1-|\zeta|^2)^p
	&\lesssim\int_{\Delta(\zeta,r)}\Delta|f|^p(z)\,dA(z),\quad f\in\H(\D).
	\end{split}
	\end{equation*}
This together with Fubini's theorem, Lemma~\ref{D-hat-lemma}(ii) and \eqref{Hclassdef} yields
	\begin{equation}\label{dirichlet1}
	\begin{split}
	\|f'\|_{A^p_{\widehat{\om}_{[p-2]}}}^p
	&\lesssim\int_{\D}\left(\int_{\Delta(\z,r)}\Delta|f|^p(z)\,dA(z)\right)\frac{\omg(\z)}{(1-|\z|^2)^2}\,dA(\z)\\
	&=\int_{\D}\Delta|f|^p(z)\left(\int_{\Delta(z,r)}\frac{\omg(\z)}{(1-|\z|)^2}\,dA(\z)\right)\,dA(z)
	\asymp\|f\|_{H^p_\om}^p,\quad f\in\H(\D).
	\end{split}
	\end{equation}

Conversely, assume that $\|f'\|_{A^p_{\widehat{\om}_{[p-2]}}}\lesssim\|f\|_{H^p_\om}$ for all $f\in\H(\D)$. By testing this inequality with the monomial $m_n$ gives $n^{p-2}\left(\widehat{\om}_{[p-2]}\right)_{np-1}\lesssim\widehat{\om}_{np-1}$ for all $n\in\N$. This implies $x^{p-2}\left(\widehat{\om}_{[p-2]}\right)_{x}\lesssim\widehat{\om}_{x}$ for all $1\le x<\infty$, which is in turn equivalent to $\widehat{\om}\in\DD$ by Lemma~\ref{D-hat-lemma}(iii). This yields $\om\in\DD$ by Lemma~\ref{D-hat-lemma}(v). Therefore (ii) is now proved.

To prove the assertions for the class $\SSS$, we appeal to the Growth and Distortion theorems for functions in $\SSS$. Namely, by \cite[Theorem~2.7]{Duren83},
	\begin{equation}\label{Eq:growth-distortion}
	\left|\frac{f'(z)}{f(z)}\right|\le\frac1{|z|}\frac{1+|z|}{1-|z|},\quad z\in\D\setminus\{0\},\quad f\in\SSS,
	\end{equation}
and hence, for $2\le p<\infty$, we have
	\begin{equation*}
	\begin{split}
	M_p^p(r,f')
	&=\frac1{2\pi}\int_0^{2\pi}|f(re^{i\theta})|^{p-2}|f'(re^{i\theta})|^2\left|\frac{f'(re^{i\theta})}{f(re^{i\theta})}\right|^{p-2}\,d\theta
	\lesssim\frac{M_1(r,\Delta|f|^p)}{r^{p-2}(1-r)^{p-2}},\quad 0<r<1.
	\end{split}
	\end{equation*}
The assertion for $2\le p<\infty$ follows from this estimate and standard arguments. In a similar way we obtain
	$$
	M_p^p(r,f')\gtrsim M_1(r,\Delta|f|^p)r^{2-p}(1-r)^{2-p},\quad 0<r<1,
	$$
provided $0<p<2$. This yields the assertion concerning the class $\SSS$. \hfill$\Box$

\section{Proofs of Theorems~\ref{Thm1} and \ref{Thm2}}\label{Sec:proofs of theorems on conf maps}

We begin with finding an upper bound for $J^p_\om$ in terms of the norm in a suitable weighted Dirichlet space.

\begin{proposition}\label{prop:Jp-Ap-om}
Let $\om$ be a radial weight. Then the following statements hold:
\begin{itemize}
\item[\rm(i)] If $0<p\le1$, then $J^p_\om(f)\lesssim\|f'\|_{A^p_{\widehat{\om}_{[p-2]}}}^p$ for all $f\in\SSS$;
\item[\rm(ii)] If $0<p<1$ and $\om\in\DD$, then $J^p_\om(f)\lesssim\|f'\|_{A^p_{\widehat{\om}_{[p-2]}}}^p$ for all $f\in\H(\D)$;
\item[\rm(iii)] $J^1_\om(f)\lesssim J^1_{\widehat{\om}}(f')+|f(0)|\lesssim\|f'\|_{A^1_\om}+|f(0)|$ for all $f\in\H(\D)$;
\item[\rm(iv)] If $1<p<\infty$ and $\om\in\Dd$, then
	\begin{equation}\label{eq:Jp-Ap-f'}
	J^p_\om(f)\lesssim\|f'\|^p_{A^p_{\om_{[p-1]}}}+|f(0)|^p,\quad f\in\H(\D).
	\end{equation}
Moreover, this estimate is in general false for the class $\SSS$ if $\om\in\DD\setminus\DDD$.
\end{itemize}
\end{proposition}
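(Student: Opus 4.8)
The four parts rest on a single mechanism, and the genuine difficulty is confined to the range $0<p\le1$. The starting point is the elementary ray estimate
\[
M_\infty(r,f)\le |f(0)|+\int_0^r M_\infty(t,f')\,dt,\qquad 0<r<1,
\]
obtained by integrating $f'$ along radii, together with the Hardy--Littlewood inequality~\eqref{eq:hl} applied to $f'$. Writing $\widehat{\lambda}(t)=\int_t^1\lambda(s)\,ds$ for a weight $\lambda$ and using \eqref{eq:hl} in the form $\int_0^r M_\infty^p(t,f')\,dt\le \pi r\,M_p^p(r,f')$, Fubini's theorem yields the general lower bound
\[
\|f'\|_{A^p_\lambda}^p\gtrsim \int_0^1 M_\infty^p(t,f')\,\widehat{\lambda}(t)\,dt,\qquad f\in\H(\D),\quad 0<p<\infty .
\]
Raising the ray estimate to the power $p$ (via $(a+b)^p\lesssim a^p+b^p$) I reduce each part, up to the additive term $|f(0)|^p\widehat{\om}(0)$, to a Hardy-type inequality
\[
\int_0^1\Bigl(\int_0^r g(t)\,dt\Bigr)^p\om(r)\,dr\lesssim \int_0^1 g(r)^p\,\widehat{\lambda}(r)\,dr
\]
for the non-negative non-decreasing function $g=M_\infty(\cdot,f')$, where $\lambda=\widehat{\om}_{[p-2]}$ in (i)--(ii), $\lambda=\om$ in (iii), and $\lambda=\om_{[p-1]}$ in (iv). The extra term disappears in (i) because $f(0)=0$, and is a lower-order contribution otherwise.

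For $p=1$ (part (iii)) this inequality is an identity: Fubini turns the left-hand side into $\int_0^1 M_\infty(t,f')\widehat{\om}(t)\,dt=J^1_{\widehat{\om}}(f')$, giving the first estimate, and the second is the displayed lower bound with $\lambda=\om$, i.e. \eqref{eq:hl} with $p=1$ plus one integration by parts as in Lemma~\ref{AuxLemmaEmbedding}. For $1<p<\infty$ (part (iv)) I would invoke the classical weighted Hardy inequality for the averaging operator $g\mapsto\int_0^r g$ on $L^p$, whose Muckenhoupt condition (with $1/p+1/p'=1$) reads
\[
\sup_{0<\rho<1}\widehat{\om}(\rho)^{1/p}\Bigl(\int_0^\rho \widehat{\om_{[p-1]}}(t)^{-1/(p-1)}\,dt\Bigr)^{1/p'}<\infty .
\]
Here $\om\in\Dd$ enters decisively: Lemma~\ref{Lemma:weights-in-R}(v) with $\beta=p-1$ gives $\widehat{\om_{[p-1]}}(t)\gtrsim \widehat{\om}(t)(1-t)^{p-1}$, whence the inner integral is $\lesssim\int_0^\rho \widehat{\om}(t)^{-1/(p-1)}(1-t)^{-1}\,dt\lesssim \widehat{\om}(\rho)^{-1/(p-1)}$ by Lemma~\ref{Lemma:weights-in-R}(iii) with $\gamma=1/(p-1)$, and the two powers of $\widehat{\om}(\rho)$ cancel exactly. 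The lower bound with $\lambda=\om_{[p-1]}$ then identifies the output with $\|f'\|^p_{A^p_{\om_{[p-1]}}}$. For the final assertion of (iv) I would exhibit a univalent $f$ violating \eqref{eq:Jp-Ap-f'} when $\om\in\DD\setminus\DDD$ (so $\om\notin\Dd$), combining \eqref{Eq:thm-Bergman-2} with a test map tuned to the failure of this Muckenhoupt condition.

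The heart of the matter is the range $0<p\le1$ (parts (i) and (ii)), where the Hardy inequality above is \emph{false} for an arbitrary radial weight: concentrating the mass of $\om$ at a scale much finer than $1-r$ makes the ratio of the two sides blow up. Two distinct structural hypotheses rescue it. In part (ii) the assumption $\om\in\DD$ supplies doubling of the \emph{weight}: since $\widehat{\om}_{[p-2]}\in L^1$ forces $\widehat{\om}_{[p-2]}\in\DD$ (by the proof of Lemma~\ref{D-hat-lemma}(v)), a dyadic decomposition along $r_k=1-2^{-k}$ together with the subadditivity $(\sum_k\ell_k a_k)^p\le\sum_k\ell_k^p a_k^p$ reduces the required bound to the pointwise estimate $\widehat{\om}(r)(1-r)^{p-1}\lesssim\int_r^1\widehat{\om}_{[p-2]}(t)\,dt$, which is precisely \eqref{eq:omhat-q-int-compare} with $q=p$; the doubling of $\widehat{\om}_{[p-2]}$ also lets the resulting discrete sum reassemble into $\int_0^1 M_\infty^p(t,f')\,\widehat{\widehat{\om}_{[p-2]}}(t)\,dt$, which the general lower bound dominates by $\|f'\|^p_{A^p_{\widehat{\om}_{[p-2]}}}$. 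In part (i) there is no doubling of $\om$; instead univalence supplies doubling of the \emph{test function}: by the Koebe distortion theorem \eqref{Eq:growth-distortion} one has $|f'(\tfrac{1+r}{2}e^{i\theta})|\asymp|f'(re^{i\theta})|$, hence $g(r)=M_\infty(r,f')$ satisfies $g(\tfrac{1+r}{2})\asymp g(r)$. For such a radially doubling $g$ the fine-scale configuration that defeats the Hardy inequality cannot occur, so the inequality holds for every $\om$, and \eqref{H^p-1} and Prawitz' inequality then tie $J^p_\om(f)$ to the right-hand side.

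I expect the main obstacle to be part (i): one must make precise, in a form matching the weight $\widehat{\om}_{[p-2]}$, the claim that radial doubling of $g=M_\infty(\cdot,f')$ forces the low-$p$ Hardy inequality to hold for an arbitrary $\om$, since the subadditivity step that suffices under $\DD$ is here too lossy while the crude monotonicity bound $\int_0^r g\le r\,g(r)$ discards exactly the smoothing one needs. The bookkeeping of the term $|f(0)|^p\widehat{\om}(0)$, and the verification that the counterexample in (iv) genuinely detects $\om\notin\Dd$, are the remaining points requiring care.
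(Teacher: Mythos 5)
Your parts (iii) and (iv) (positive assertion) are sound, and (iv) in fact takes a genuinely different route from the paper: you verify Muckenhoupt's condition for the averaging operator $g\mapsto\int_0^r g$ using Lemma~\ref{Lemma:weights-in-R}(v) with $\beta=p-1$ and Lemma~\ref{Lemma:weights-in-R}(iii) with $\gamma=1/(p-1)$, whereas the paper splits the inner integral by H\"older with an auxiliary power $(1-t)^{x}$, $p-1<x<p-1+\beta(\om)$, and then invokes Proposition~\ref{proposition-norm}(iii); both arguments are valid, and yours is arguably cleaner. Part (ii) also follows the paper's mechanism (dyadic decomposition, subadditivity of $x\mapsto x^p$, and the pointwise weight estimate available under $\DD$). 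The problems are elsewhere.

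The decisive gap is part (i), which you explicitly leave open, and your diagnosis of why it is hard is mistaken: the subadditivity step is \emph{not} too lossy there --- it is exactly the engine of the paper's proof of (i) as well as of (ii). For any radial weight and any $f\in\H(\D)$, the ray estimate, the dyadic decomposition $r_n=\max\{0,1-2^n(1-r)\}$ with subadditivity, and Fubini give \eqref{ieq:Jpom-calc-cov6}, i.e.
\[
J^p_\om(f)\lesssim\int_{1/2}^1M_\infty^p(t,f')(1-t)^{p-1}\widehat{\om}(2t-1)\,dt+|f(0)|^p .
\]
The univalence doubling you identified --- which is \eqref{eq:Minf-S-asymp-phdisks} (Pommerenke), \emph{not} a consequence of \eqref{Eq:growth-distortion}, which only controls $|f'/f|$ --- is then used not to establish an abstract Hardy inequality for radially doubling $g$, but simply to trade $M_\infty(t,f')$ for $M_\infty(2t-1,f')$ inside this integral; the change of variable $s=2t-1$ and one more application of the doubling land you on $\int_0^1M_\infty^p(t,f')(1-t)^{p-1}\widehat{\om}\bigl(\frac{1+t}{2}\bigr)\,dt$. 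Since $0<p\le1$, every radial weight satisfies the pointwise bound $(1-t)^{p-1}\widehat{\om}\bigl(\frac{1+t}{2}\bigr)\lesssim\int_t^1\widehat{\om}_{[p-2]}(s)\,ds$ (integrate $(1-s)^{p-2}$ over $[t,\frac{1+t}{2}]$), and Fubini together with \eqref{eq:hl} then yields the bound by $\|f'\|^p_{A^p_{\widehat{\om}_{[p-2]}}}$. Without this (or an equivalent) completion, the one part of the proposition where univalence carries the load is unproven.

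A second, smaller gap is the final assertion of (iv): you only promise a test map ``tuned to the failure'' of the Muckenhoupt condition, but no example is produced. The paper's concrete choice is $\om=v_{p+1}$ from \eqref{V} and $f(z)=-\log(1-z)\in\SSS$, for which $J^p_{v_{p+1}}(f)\asymp\int_0^1\frac{dr}{(1-r)\log\frac{e}{1-r}}=\infty$ while $\|f'\|^p_{A^p_{(v_{p+1})_{[p-1]}}}\asymp\int_0^1\frac{dr}{(1-r)\left(\log\frac{e}{1-r}\right)^{p+1}}<\infty$. Exhibiting such an example (or an equally explicit one) is required to close the statement.
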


\begin{proof} (i) Observe that
    \begin{equation}\label{19}
    \begin{split}
    |f(re^{i\t})|^p
    =\left|\int_0^{re^{i\t}}f'(\z)\,d\z+f(0)\right|^p
    \le2^{p}\left(\left(\int_0^r|f'(te^{i\t})|\,dt\right)^p+|f(0)|^p\right),\quad f\in\H(\D),
    \end{split}
    \end{equation}
for each $0<p<\infty$, and hence
	\begin{equation}\label{eq:Minf-std-ieq}
	M_\infty^p(r,f)\lesssim\left(\int_0^r M_\infty(t,f')\,dt\right)^p+|f(0)|^p,\quad f\in\H(\D).
	\end{equation}
Denote $r_n=\max\{0,1-2^n(1-r)\}$ for all $n\in\mathbb{Z}$ so that $r_n=0$ for all $n\ge-\log_2(1-r)$ and
    $$
    \cdots<r_2<r_1<r_0=r<r_{-1}=\frac{1+r}{2}<r_{-2}<\cdots<r_{-k}\to1^-,\quad k\to\infty.
    $$
Then, by using the subadditivity of $x^p$ for $0<p<1$, we deduce
    \begin{equation}\label{eq:Minf-subadd1}
    \begin{split}
    \left(\int_0^rM_\infty(t,f')\,dt\right)^p
    &\le\left(\sum_{n\ge0}M_\infty(r_n,f')(r_n-r_{n+1})\right)^p\\
    &\le\sum_{n\ge0}M^p_\infty(r_n,f')2^{np}(1-r)^p\frac{\int_{r_{n}}^{r_{n-1}}(1-s)^{p-1}\,ds}{2^{np}(1-r)^p\frac{1-2^{-p}}{p}}\\
    &\le\frac{p}{1-2^{-p}}\int_0^{\frac{1+r}{2}}M^p_\infty(s,f')(1-s)^{p-1}\,ds,\quad f\in\H(\D).
    \end{split}
    \end{equation}
The estimates \eqref{eq:Minf-std-ieq} and \eqref{eq:Minf-subadd1} together with Fubini's theorem yield
		\begin{equation}\label{ieq:Jpom-calc-cov6}
    \begin{split}
    J_\om^p(f)
    &\lesssim\int_0^1\left(\int_0^{\frac{1+r}{2}}M_\infty^p(t,f')(1-t)^{p-1}\,dt\right)\om(r)\,dr+|f(0)|^p\\
		&=\widehat{\om}(0)\int_0^{\frac12}M_\infty^p(t,f')(1-t)^{p-1}\,dt\\
		&\quad+\int_{\frac12}^1M_\infty^p(t,f')(1-t)^{p-1}\widehat{\om}\left(2t-1\right)\,dt+|f(0)|^p\\
		&\lesssim\int_{\frac12}^1M_\infty^p(t,f')(1-t)^{p-1}\widehat{\om}\left(2t-1\right)\,dt+|f(0)|^p,\quad f\in\H(\D).
		\end{split}
    \end{equation}
It is well known that
	\begin{equation}\label{eq:Minf-S-asymp-phdisks}
	M_\infty(r,f')\asymp M_\infty(\r,f'),\quad 0\le r\le\r\le\frac{1+r}{2}<1,\quad 0\le r<1,\quad f\in\SSS,
	\end{equation}
see \cite[(11) on p.~128]{Pommerenke1} for details. This together with \eqref{ieq:Jpom-calc-cov6} yields
		\begin{equation*}
    \begin{split}
    J_\om^p(f)
    &\lesssim\int_{\frac12}^1M_\infty^p(2t-1,f')(1-t)^{p-1}\widehat{\om}\left(2t-1\right)\,dt\\
		&\asymp\int_{\frac12}^1M_\infty^p(t,f')\widehat{\om}_{[p-1]}(t)\,dt
		\asymp\int_{\frac12}^1M_\infty^p(2t-1,f')\widehat{\om}_{[p-1]}(t)\,dt\\
		&\asymp\int_{0}^1M_\infty^p(t,f')(1-t)^{p-1}\widehat{\om}\left(\frac{1+t}{2}\right)\,dt,\quad f\in\SSS.
		\end{split}
    \end{equation*}
Since Fubini's theorem shows that
	\begin{equation}\label{eq:omhatp-fubinicalc-67}
  \begin{split}
	\int_r^1\left(\int_t^1\widehat{\om}_{[p-2]}(s)\,ds\right)dt
	&=\int_r^1\widehat{\om}_{[p-2]}(s)(s-r)\,ds
	\ge\int_{\frac{1+3r}{4}}^{\frac{1+r}{2}}\widehat{\om}_{[p-2]}(s)(s-r)\,ds\\
	&\ge\widehat{\om}\left(\frac{1+r}{2}\right)\left(1-\frac{1+3r}{4}\right)^{p-2}\left(\frac{1+3r}{4}-r\right)\frac{1-r}{4}\\
	&\asymp\widehat{\om}\left(\frac{1+r}{2}\right)(1-r)^p
	\ge p\int_r^1(1-t)^{p-1}\widehat{\om}\left(\frac{1+t}{2}\right)\,dt,
	\end{split}
  \end{equation}
Lemma~\ref{AuxLemmaEmbedding} with $q=\infty$ yields
	$$
	\int_{0}^1M_\infty^p(t,f')(1-t)^{p-1}\widehat{\om}\left(\frac{1+t}{2}\right)\,dt
	\lesssim\int_{0}^1M_\infty^p(t,f')\left(\int_t^1\widehat{\om}_{[p-2]}(s)\,ds\right)\,dt.
	$$
Hence Fubini's theorem and \eqref{eq:hl} finally give
	\begin{equation}\label{eq:Jp-f'-Ap-closure}
  \begin{split}
	J_\om^p(f)
	&\lesssim\int_{0}^1M_\infty^p(t,f')\left(\int_t^1\widehat{\om}_{[p-2]}(s)\,ds\right)\,dt
	=\int_0^1\widehat{\om}_{[p-2]}(s)\left(\int_0^sM_\infty^p(t,f')\,dt\right)\,ds\\
	&\le\pi\int_0^1M_p^p(s,f')\widehat{\om}_{[p-2]}(s)s\,ds
	=\frac12\|f'\|_{A^p_{\widehat{\om}_{[p-2]}}}^p,\quad f\in\SSS.
	\end{split}
  \end{equation}
Thus the assertion is proved for $0<p<1$. The proof in the case $p=1$ is similar to that above but easier since the argument employed in \eqref{eq:Minf-subadd1} is not needed. Therefore we omit the details of this case.

(ii) In view of \eqref{ieq:Jpom-calc-cov6}, \eqref{eq:Jp-f'-Ap-closure} and Lemma~\ref{AuxLemmaEmbedding} it suffices to show that
	$$
	\int_r^1(1-t)^{p-1}\widehat{\om}\left(2t-1\right)\,dt
	\lesssim\int_r^1\left(\int_t^1\widehat{\om}_{[p-2]}(s)\,ds\right)dt,\quad\frac12\le r<1.
	$$
An appropriate lower estimate for the right hand side is obtained in \eqref{eq:omhatp-fubinicalc-67}. Further, $\om\in\DD$ if and only if $\widehat{\om}_{[p-1]}\in\DD$ by Lemma~\ref{D-hat-lemma}(v). Hence the left hand side satisfies
	\begin{equation*}
  \begin{split}
	\int_r^1(1-t)^{p-1}\widehat{\om}\left(2t-1\right)\,dt	
	&=\frac1{2^p}\int_{2r-1}^1(1-s)^{p-1}\widehat{\om}\left(s\right)\,ds\\
	&\lesssim\int_{\frac{1+r}{2}}^1(1-s)^{p-1}\widehat{\om}\left(s\right)\,ds
	\lesssim\widehat{\om}\left(\frac{1+r}{2}\right)(1-r)^p,
	\end{split}
  \end{equation*}
and the assertion follows.

(iii) The case $p=1$ of \eqref{eq:Minf-std-ieq} and Fubini's theorem yield
	$$
	J^1_\om(f)\lesssim\int_0^1\left(\int_0^rM_\infty(t,f')\,dt\right)\om(r)\,dr+|f(0)|=J_{\widehat{\om}}^1(f')+|f(0)|,
	$$
and thus the first asymptotic inequality is proved. The first estimate above together with \eqref{eq:hl} yields the second inequality.

(iv) Let $x>p-1>0$, where $x=x(\om,p)$ will be fixed later. The estimate \eqref{eq:Minf-std-ieq} and H\"older's inequality yield
	\begin{equation}\label{eq:Jp-Hoelder-asympcalc6x}
	\begin{split}
	J^p_\om(f)
	&\lesssim\int_0^1\left(\int_0^rM_\infty(t,f')(1-t)^\frac{x}{p}(1-t)^{-\frac{x}{p}}\,dt\right)^p\om(r)\,dr+|f(0)|^p\\
	&\le\int_0^1\left(\int_0^rM_\infty^p(t,f')(1-t)^x\,dt\right)\left(\int_0^r\frac{dt}{(1-t)^{\frac{xp'}{p}}}\right)^\frac{p}{p'}\om(r)\,dr+|f(0)|^p\\
	&\asymp\int_0^1\left(\int_0^rM_\infty^p(t,f')(1-t)^x\,dt\right)\frac{\om(r)}{(1-r)^{x+1-p}}\,dr+|f(0)|^p,
	\end{split}
	\end{equation}
where, by Fubini's theorem and \eqref{eq:hl},
	\begin{equation*}
	\begin{split}
	\int_0^rM_\infty^p(t,f')(1-t)^x\,dt
	&=\int_0^rx(1-s)^{x-1}\left(\int_0^sM_\infty^p(t,f')\,dt\right)ds
	+(1-r)^x\int_0^rM_\infty^p(t,f')\,dt\\
	&\lesssim\int_0^rM_p^p(s,f')(1-s)^{x-1}s\,ds
	+M_p^p(r,f')(1-r)^xr.
	\end{split}
	\end{equation*}
It follows that
	\begin{equation*}
	\begin{split}
	J^p_\om(f)
	&\lesssim\int_0^1\left(\int_0^rM_p^p(s,f')(1-s)^{x-1}s\,ds\right)\frac{\om(r)}{(1-r)^{x+1-p}}\,dr
	+\|f'\|^p_{A^p_{\om_{[p-1]}}}+|f(0)|^p\\
	&=\int_0^1M_p^p(s,f')(1-s)^{x-1}\left(\int_s^1\frac{\om(r)}{(1-r)^{x+1-p}}\,dr\right)s\,ds+\|f'\|^p_{A^p_{\om_{[p-1]}}}+|f(0)|^p.
	\end{split}
	\end{equation*}
Since $\om\in\Dd$ by the hypothesis, an integration by parts together with Lemma~\ref{Lemma:weights-in-R} shows that
	\begin{equation}\label{eq:om-1minusr-intest}
	\int_t^1\frac{\om(r)}{(1-r)^{x+1-p}}\,dr\lesssim\widehat{\om}(t)(1-t)^{p-1-x},\quad 0\le t<1,
	\end{equation}
if $x=x(\om,p)>p-1$ is sufficiently small, that is, $x<p-1+\beta$, where $\beta=\beta(\om)>0$ is that of Lemma~\ref{Lemma:weights-in-R}(ii). This  estimate together with Proposition~\ref{proposition-norm}(iv) now yields \eqref{eq:Jp-Ap-f'}.

It remains to show that the estimate \eqref{eq:Jp-Ap-f'} is in general false for the class $\SSS$ if $1<p<\infty$ and $\om\in\DD\setminus\DDD$. To see this, consider the weight $v_\alpha\in\DD\setminus\DDD$ defined by \eqref{V}, where $1<\alpha<\infty$, and the function $f(z)=-\log(1-z)$ which belongs to~$\SSS$. Then
	$$
	J^p_{v_\alpha}(f)\asymp\int_0^1\frac{dr}{(1-r)\left(\log\frac{e}{1-r}\right)^{\alpha-p}}
	$$
and
	$$
	\|f'\|_{A^p_{{v_\alpha}_{[p-1]}}}^p\asymp\int_0^1(1-r)^{1-p+p-1}v_\alpha(r)\,dr
	=\int_0^1\frac{dr}{(1-r)\left(\log\frac{e}{1-r}\right)^{\alpha}}.
	$$
By choosing $\alpha=p+1$ we obtain the contradiction we are after.
\end{proof}

The next step is to bound $\|f\|_{H^p_\om}$ and $\|f\|_{S^p_\om}$ by $J^p_\om(f)$.

\begin{lemma}\label{lemma:H,S-J-univalent}
Let $0<p<\infty$ and $\om$ be a radial weight. Then
	\begin{equation}\label{Eq:H^p}
	\|f\|_{H^p_\om}^p\le2\pi pJ^p_\om(f),\quad f\in\U,
	\end{equation}
and
	\begin{equation}\label{Eq:S^p}
	\|f\|_{S^p_\om}^p\le\pi^\frac{p}{2}J^p_\om(f),\quad f\in\U.
	\end{equation}
None of the corresponding asymptotic inequalities is valid for all $f\in\H(\D)$ unless $\om$ vanishes almost everywhere on $\D$.
\end{lemma}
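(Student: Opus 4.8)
The plan is to obtain both upper bounds pointwise in the radial variable $r$ and then integrate against $\om(r)\,dr$. Fix $f\in\U$ and $0<r<1$. Since $f$ is univalent, $f'$ never vanishes and the restriction of $f$ to $D(0,r)$ is injective, so the change of variables $w=f(z)$, with Jacobian $|f'(z)|^2$, is legitimate. Moreover, by the maximum modulus principle $|f(z)|\le M_\infty(r,f)$ whenever $|z|\le r$, so the image is contained in a disc about the origin of the $w$-plane, $f(D(0,r))\subseteq D(0,M_\infty(r,f))$.

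For the $S^p_\om$ estimate I would use that, by injectivity, $\Area(f(D(0,r)))=\int_{D(0,r)}|f'|^2\,dA$ equals the genuine Euclidean area of $f(D(0,r))$, which by the containment above is at most $\pi M_\infty^2(r,f)$. Raising to the power $p/2$ and integrating against $\om(r)\,dr$ gives $\|f\|_{S^p_\om}^p\le\pi^{p/2}J^p_\om(f)$ at once. For the $H^p_\om$ estimate I would start from $\Delta|f|^p=p^2|f|^{p-2}|f'|^2$ and apply the same change of variables to obtain
$$\int_{D(0,r)}\Delta|f|^p\,dA=p^2\int_{f(D(0,r))}|w|^{p-2}\,dA(w)\le p^2\int_{D(0,M_\infty(r,f))}|w|^{p-2}\,dA(w)=2\pi p\,M_\infty^p(r,f),$$
where the inequality uses $|w|^{p-2}\ge0$ together with the containment, and the last integral is evaluated in polar coordinates (note $p-2>-2$, so it converges for every $p>0$). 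Integrating this pointwise bound against $\om(r)\,dr$ and recalling the definition of $\|f\|_{H^p_\om}$ in \eqref{Hclassdef} yields $\|f\|_{H^p_\om}^p\le2\pi p\,J^p_\om(f)$.

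For the final sharpness statement I would test the two reverse (asymptotic) inequalities with the monomials $m_n(z)=z^n$. A direct computation in polar coordinates gives $J^p_\om(m_n)=\om_{np}$, $\|m_n\|_{H^p_\om}^p=2\pi p\,n\,\om_{np}$ and $\|m_n\|_{S^p_\om}^p=(\pi n)^{p/2}\om_{np}$. If $\om$ does not vanish almost everywhere, then $\om_{np}=\int_0^1 r^{np}\om(r)\,dr>0$ for every $n$; hence an estimate $\|f\|_{H^p_\om}^p\lesssim J^p_\om(f)$ or $\|f\|_{S^p_\om}^p\lesssim J^p_\om(f)$ valid for all $f\in\H(\D)$ would, upon substituting $f=m_n$, force $2\pi p\,n\lesssim1$ respectively $(\pi n)^{p/2}\lesssim1$ uniformly in $n$, which is impossible. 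This rules out both asymptotic inequalities on the full space $\H(\D)$.

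The polar-coordinate evaluations are routine; the one point demanding care is the legitimacy of the change of variables when $0<p<2$, since then $\Delta|f|^p$ is singular at the (at most one) zero of $f$. I expect this to be the main technical obstacle, but it is mild: near a zero $z_0$ of $f$ one has $|f(z)|\asymp|z-z_0|$, so $|f|^{p-2}|f'|^2\asymp|z-z_0|^{p-2}$ is locally integrable in the plane for every $p>0$, and correspondingly $|w|^{p-2}$ is integrable over $D(0,M_\infty(r,f))$; thus both integrals converge and the change of variables is valid.
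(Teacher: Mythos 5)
Your proof is correct and follows essentially the same route as the paper: the univalent change of variables $w=f(z)$ together with the containment $f(D(0,r))\subseteq D(0,M_\infty(r,f))$ gives the pointwise-in-$r$ bounds (with the same constants $2\pi p$ and $\pi^{p/2}$), and the sharpness is settled by testing with the monomials $m_n$ exactly as in the paper. The integrability concern you raise for $0<p<2$ is not actually an obstacle, since the change-of-variables identity holds for all nonnegative measurable integrands, but your resolution of it is correct in any case.
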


\begin{proof}
Let $0<p<\infty$ and $\om$ be a radial weight. If $f\in\U$, then
	\begin{equation}\label{lapinftyeq1}
	\begin{split}
	\frac{1}{2\pi p}\int_{D(0,r)}\Delta|f|^{p}(z)\,dA(z)
	&\le\frac{p}{2\pi}\int_{|w|\le M_{\infty}(r,f)}|w|^{p-2}\,dA(w)\\
	&=p\int_{0}^{M_\infty(r,f)}t^{p-1}\,dt
	=M_{\infty}^{p}(r,f).
	\end{split}	
	\end{equation}
The first assertion \eqref{Eq:H^p} follows from this inequality. Moreover, the case $p=2$ in \eqref{lapinftyeq1} yields
	\begin{equation*}
	\begin{split}
	\frac{1}{\pi}\int_{D(0,r)}|f'(z)|^2\,dA(z)\le M_\infty^2(r,f),
	\end{split}
	\end{equation*}
from which \eqref{Eq:S^p} follows.

The monomial $m_n$ satisfies
	$$
	J^p_\om(m_n)=\om_{np},\quad
	\|m_n\|_{H^p_\om}^p=2\pi np\om_{np},\quad\textrm{and}\quad
	\|m_n\|_{S^p_\om}^p=\left(\pi n\right)^{\frac{p}{2}}\om_{np},\quad n\in\N,
	$$
and hence $J^p_\om(m_n)$ does not dominate $\|m_n\|_{H^p_\om}^p$ nor $\|m_n\|_{S^p_\om}^p$ unless $\om$ vanishes almost everywhere on $\D$.
\end{proof}

For $f\in\H(\D)$ with Maclaurin series expansion $f(z)=\sum_{n=0}^\infty\widehat{f}(n)z^n$, write
	$$
	P(r,f)=\sum_{n=1}^\infty|\widehat{f}(n)|r^n,\quad 0\le r<1.
	$$
The next result is a generalization of \cite[Theorem~15]{H-L1926} and \cite[Proposition~2]{M-P1983} to doubling weights.

\begin{lemma}\label{P-lemma}
Let $0<p<\infty$ and $\om\in\DDD$. Then
	\begin{equation}\label{Eq:poiu}
	J^p_\om(f)\lesssim\int_0^1P(r,f)^p\om(r)\,dr+|f(0)|^p
	\lesssim\|f\|_{S^p_\om}^p+|f(0)|^p,\quad f\in\H(\D).
	\end{equation}
Moreover, the estimate $J^p_\om(f)\lesssim\|f\|_{S^p_\om}^p+|f(0)|^p$ is in general false for the class $\SSS$ if $0<p<\infty$ and $\om\in\DD\setminus\DDD$.
\end{lemma}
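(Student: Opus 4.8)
The first asymptotic inequality in \eqref{Eq:poiu} is immediate: since $M_\infty(r,f)\le|f(0)|+\sum_{n\ge1}|\widehat{f}(n)|r^n=|f(0)|+P(r,f)$, raising to the power $p$ and integrating against $\om$ gives $J^p_\om(f)\lesssim\int_0^1P(r,f)^p\om(r)\,dr+|f(0)|^p$ because $\widehat{\om}(0)<\infty$. Thus the whole content lies in the second inequality, and we may assume $\|f\|_{S^p_\om}<\infty$. I would write $a_n=|\widehat{f}(n)|$, fix $K=K(\om)\ge2$ as in the definition of $\Dd$, and use the base-$K$ blocks $I_k=\{n\in\N:K^k\le n<K^{k+1}\}$ together with $c_k=(\sum_{n\in I_k}na_n^2)^{1/2}$. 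Cauchy--Schwarz on each block and $\sum_{n\in I_k}n^{-1}\asymp\log K$ give $\sum_{n\in I_k}a_nr^n\lesssim(\sum_{n\in I_k}na_n^2r^{2n})^{1/2}=:A_k(r)\le c_k$, while $\sum_kA_k(r)^2=\sum_nna_n^2r^{2n}=\Area(f(D(0,r)))/\pi$.

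Next I would discretize both sides at the points $t_j=1-K^{-j}$. Since $\om\in\DD$, the moments satisfy $w_j:=\om_{K^j}\asymp\widehat{\om}(t_j)$, and since $\om\in\Dd$, condition \eqref{Eq:def-of-D-check} forces $\widehat{\om}(t_{j+1})\le C^{-1}\widehat{\om}(t_j)$ with $C>1$, whence $\int_{t_j}^{t_{j+1}}\om\asymp w_j$ and, in particular, $\sum_{j\ge k}w_j\asymp w_k$. As $P(\cdot,f)$ and $r\mapsto\Area(f(D(0,r)))$ are nondecreasing, this yields
\[
\int_0^1P(r,f)^p\om(r)\,dr\lesssim\sum_jP(t_{j+1},f)^pw_j,\qquad
\|f\|_{S^p_\om}^p\gtrsim\sum_j\Area(f(D(0,t_j)))^{p/2}w_j.
\]
For $k\le j$ one has $A_k(t_{j+1})\le c_k$, while for $k>j$ the factor $t_{j+1}^{2n}$ forces $A_k(t_{j+1})\lesssim e^{-K^{k-j-1}}c_k$; summing the blocks gives $P(t_{j+1},f)\lesssim\sum_{k\le j}c_k+\sum_{k>j}e^{-K^{k-j-1}}c_k$, and on the other side $\Area(f(D(0,t_j)))\gtrsim\sum_{k<j}c_k^2$.

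It then remains to prove the discrete estimate $\sum_j(\sum_{k\le j}c_k)^pw_j\lesssim\sum_kc_k^pw_k$, together with the analogous bound for the rapidly decaying tail, after which the chain closes via $\sum_kc_k^pw_k=\sum_jc_j^pw_j\le\sum_j(\sum_{k\le j}c_k^2)^{p/2}w_j\lesssim\sum_j\Area(f(D(0,t_{j+1})))^{p/2}w_j\asymp\sum_j\Area(f(D(0,t_j)))^{p/2}w_j$. The decisive structural input is the geometric decay $\sum_{j\ge k}w_j\asymp w_k$, which is precisely membership in $\Dd$. For $0<p\le1$ the discrete estimate follows at once from subadditivity of $t\mapsto t^p$ and $\sum_{j\ge k}w_j\asymp w_k$; for $p>1$ it is a weighted discrete Hardy inequality whose Muckenhoupt constant reduces to a bounded quantity because both $\sum_{j\ge m}w_j\asymp w_m$ and $\sum_{k\le m}w_k^{-p'/p}\asymp w_m^{-p'/p}$ by the same geometric decay. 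The tail is handled identically once one notes $w_j\le C^{k-j}w_k$ for $j<k$ (here $\om\in\DD$ enters), the double-exponential factor $e^{-K^{k-j-1}}$ absorbing this geometric growth. \emph{The main obstacle is exactly the mismatch between the first-power sum defining $P(r,f)$ and the square sum defining the area}: pointwise $P(r,f)$ may exceed $\Area(f(D(0,r)))^{1/2}$ by a factor of order $(\log\frac1{1-r})^{1/2}$, and only the geometric decay of the moments---that is, the hypothesis $\om\in\Dd$---tames this loss after integration.

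Finally, for the sharpness statement I would take $\om=v_\alpha$ from \eqref{V}: since $\widehat{v_\alpha}(r)\asymp(\log\frac{e}{1-r})^{1-\alpha}$ is, up to constants, invariant under $r\mapsto1-\frac{1-r}{K}$, we have $v_\alpha\in\DD\setminus\Dd=\DD\setminus\DDD$ for every $\alpha>1$. Testing with $f(z)=-\log(1-z)\in\SSS$, for which $M_\infty(r,f)=\log\frac1{1-r}$ and $\Area(f(D(0,r)))\asymp\log\frac1{1-r}$, the substitution $u=\log\frac{e}{1-r}$ gives $J^p_{v_\alpha}(f)\asymp\int_1^\infty u^{p-\alpha}\,du$ and $\|f\|_{S^p_{v_\alpha}}^p\asymp\int_1^\infty u^{p/2-\alpha}\,du$. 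Choosing any $\alpha\in(p/2+1,p+1]$ makes the first integral diverge and the second converge, so $J^p_{v_\alpha}(f)=\infty$ while $\|f\|_{S^p_{v_\alpha}}^p+|f(0)|^p<\infty$, which shows the estimate $J^p_\om(f)\lesssim\|f\|_{S^p_\om}^p+|f(0)|^p$ fails for $\om\in\DD\setminus\DDD$.
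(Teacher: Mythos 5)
Your argument is correct, and while it shares the paper's basic skeleton (reduce $J^p_\om$ to $\int_0^1P(r,f)^p\om(r)\,dr$, split the coefficients into geometric blocks, apply Cauchy--Schwarz blockwise, and identify the area through Parseval's formula), the technical implementation is genuinely different. The paper uses \emph{weight-adapted} blocks: it defines $r_n$ by $\widehat{\om}(r_n)=\widehat{\om}(0)K^{-n}$, groups coefficients into $I(n)=\{k:M_n\le k<M_{n+1}\}$ with $M_n\approx(1-r_n)^{-1}$, and then invokes the known two-sided block characterization $\int_0^1g(r)^p\om(r)\,dr\asymp\sum_nK^{-n}\bigl(\sum_{k\in I(n)}a_k\bigr)^p$ of \cite[Proposition~9]{P-R2015} (see also \cite{P-R2013}), applied once to $P(r,f)$ and once, with $r^2$ in place of $r$, to the Parseval expansion of the area; the hypothesis $\om\in\DDD$ enters only through $M_{n+1}\asymp M_n$, and no redistribution between blocks is ever needed because the cited proposition matches the integral to the block sums exactly. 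You instead take \emph{fixed} base-$K$ blocks $I_k=\{n:K^k\le n<K^{k+1}\}$ and points $t_j=1-K^{-j}$, carry out the discretization of both integrals by hand (monotonicity, the one-step decay $\widehat{\om}(t_{j+1})\le C^{-1}\widehat{\om}(t_j)$ from $\Dd$, the doubling from $\DD$, and the moment comparison $\om_{K^j}\asymp\widehat{\om}(t_j)$), and then, because $P(t_{j+1},f)$ aggregates all blocks $k\le j$, you must redistribute mass via a discrete weighted Hardy inequality --- subadditivity for $p\le1$, Muckenhoupt's criterion for $p>1$ --- plus a tail estimate in which the double-exponential factor $e^{-K^{k-j-1}}$ absorbs the geometric growth of the moments. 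Your verification of the Muckenhoupt constant is sound: the geometric decay of $w_j=\om_{K^j}$ gives both $\sum_{j\ge m}w_j\asymp w_m$ and $\sum_{k\le m}w_k^{-p'/p}\asymp w_m^{-p'/p}$, and $(1-p')(p-1)=-1$ makes the product bounded. What your route buys is self-containedness (the deep citation is replaced by elementary block estimates and a classical Hardy inequality); what it costs is precisely the extra Hardy-type step that the paper's weight-adapted blocks render unnecessary. For the sharpness statement your argument coincides with the paper's: both test $f(z)=-\log(1-z)\in\SSS$ against $v_\alpha$ of \eqref{V}, the paper fixing $\alpha=p+1$, which is the endpoint of your admissible interval $(p/2+1,\,p+1]$.
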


\begin{proof}
We may assume that $\widehat{\om}(0)=1$: if this is not the case, consider $\om/\widehat{\om}(0)$ instead of $\om$, and cancel the constant $\widehat{\om}(0)$ at the end of the proof. Let $K>1$ and define the sequence $\{r_n\}_{n=0}^\infty=\{r_n(\om,K)\}_{n=0}^\infty$ by the identity $\widehat{\omega}(r_n)=K^{-n}$ for all $n\in\N\cup\{0\}$. Then $r_0=0$ and $r_n\to1^-$, as $n\to\infty$. For $x\in[0,\infty)$, let $E(x)\in\N\cup\{0\}$ such that $E(x)\le x<E(x)+1$, and set $M_n=E\left(\frac{1}{1-r_n}\right)$. By following \cite{P-R2013}, write
	\begin{equation*}
	\begin{split}
	I(0)=I_{\omega,K}(0)=\{k\in\N\cup\{0\}: k<M_1\}
	\end{split}
	\end{equation*}
and
	\begin{equation*}
	\begin{split}
	I(n)=I_{\omega,K}(n)=\{k\in\N:M_n\le k<M_{n+1}\},\quad n\in\N.
	\end{split}
	\end{equation*}
An application of Lemma~\ref{D-hat-lemma}(ii) shows that for $K=C4^\beta$ we have
		$$
		\frac{M_{n+1}}{M_n}\ge r_{n+1}\frac{1-r_n}{1-r_{n+1}}\ge \frac12\left(\frac{K}{C}\right)^\frac1\b=2,\quad r_{n+1}\ge\frac12,
		$$
and similarly, $M_{n+1}/M_n\le2\cdot4^\frac\b\a$ whenever $r_{n}\ge\frac12$. It follows that $M_n\asymp M_{n+1}$ for all $n\in\N\cup\{0\}$.
	
By \cite[Proposition~9]{P-R2015}, see also \cite[Proposition~9]{P-R2013}, for each $0<p<\infty$, $1<K<\infty$ and $\om\in\DD$ with $\widehat{\om}(0)=1$, there exists a constant $C=C(p,\om,K)>0$ such that
    \begin{equation}\label{j13}
    \frac{1}{C}\sum_{n=0}^\infty K^{-n}\left(\sum_{k\in I_{\om,K}(n)}a_k\right)^p
		\le\int_{0}^1g(r)^p\om(r)\,dr
		\le C\sum_{n=0}^\infty K^{-n}\left(\sum_{k\in I_{\om,K}(n)}a_k\right)^p
    \end{equation}
for all $g(r)=\sum_{k=0}^\infty a_k r^k$, where $a_k\ge
0$ for all $k\in\N\cup\{0\}$. The trivial inequality $M_\infty(r,f)\le P(r,f)$ together with \eqref{j13} applied to $P$ gives
	\begin{equation}\label{yte1}
	\begin{split}
	J_{\omega}^p (f)
	&\lesssim\int_0^1P(r,f)^p\omega(r)\,dr+|\widehat{f}(0)|^p
	\asymp \sum_{n=0}^{\infty}K^{-n}\left(\sum_{k\in{I_{\omega,K}(n)}}|\widehat{f}(k)|\right)^p.
	\end{split}
	\end{equation}
The Cauchy-Schwarz inequality and $M_n\asymp M_{n+1}$ now yield
	\begin{equation}\label{yte2}
	\begin{split}
	\sum_{n=0}^{\infty}K^{-n}\left(\sum_{k\in{I_{\omega,K}(n)}}|\widehat{f}(k)|\right)^p
	&\le\sum_{n=0}^{\infty}K^{-n}\left(M_{n+1}-M_n\right)^{\frac{p}{2}}
	\left(\sum_{k\in{I_{\omega,K}(n)}}|\widehat{f}(k)|^2\right)^{\frac{p}{2}}\\
	&\lesssim\sum_{n=0}^{\infty}K^{-n}\left(\sum_{k\in{I_{\omega,K}(n)}}k|\widehat{f}(k)|^2\right)^{\frac{p}{2}}.
	\end{split}
	\end{equation}
On the other hand, Parseval's identity gives
	\begin{equation}\label{parseval1}
	\begin{split}
	\int_{D(0,r)}|f'(z)|^2\,dA(z)
	&=2\pi\sum_{k=1}^\infty k^2|\widehat{f}(k)|^2\int_0^rs^{2k-1}\,ds
	=\pi\sum_{k=1}^\infty k|\widehat{f}(k)|^2r^{2k}.
	\end{split}
	\end{equation}
The proof of \eqref{j13} shows that the first inequality remains valid if we replace $g(r)$ by $g(r^2)$. Therefore \eqref{parseval1} yields
	\begin{equation}\label{yte3}
	\begin{split}
	\|f\|_{S_\omega^p}^p
	&=\int_0^1\left(\pi\sum_{k=1}^\infty k|\widehat{f}(k)|^2r^{2k}\right)^{\frac{p}{2}}\omega(r)\,dr
	\gtrsim\sum_{n=0}^{\infty}K^{-n}\left(\sum_{k\in{I_{\omega,K}(n)},k\in\N}k|\widehat{f}(k)|^2\right)^{\frac{p}{2}}.
	\end{split}
	\end{equation}
The inequality \eqref{Eq:poiu} follows by combining \eqref{yte1}, \eqref{yte2} and \eqref{yte3}.

It remains to show that the estimate $J^p_\om(f)\lesssim\|f\|_{S^p_\om}^p+|f(0)|^p$ is in general false for $0<p<\infty$ and $f\in\SSS$ if $\om\in\DD\setminus\DDD$. To see this, consider the weight $v_\alpha\in\DD\setminus\DDD$ defined by \eqref{V}, where $1<\alpha<\infty$, and the function $f(z)=-\log(1-z)$ which belongs to $\SSS$. Then
	\begin{equation}\label{eq:Jp-valpha-asymp}
	J^p_{v_\alpha}(f)=\int_0^1P(r,f)^p\om(r)\,dr\asymp\int_0^1\frac{dr}{(1-r)\left(\log\frac{e}{1-r}\right)^{\alpha-p}}
	\end{equation}
and
	$$
	\|f\|_{S^p_{v_\alpha}}^p\asymp\int_0^1\frac{dr}{(1-r)\left(\log\frac{e}{1-r}\right)^{\alpha-\frac{p}{2}}}.
	$$
By choosing $\alpha=p+1$ we obtain the desired contradiction.
\end{proof}

The last auxiliary result in this section generalizes \cite[Theorem~1]{HT1978} and \cite[Theorem~2]{M-P1983} to doubling weights. The proof relies heavily on Lemma~\ref{P-lemma}.

\begin{lemma}\label{Lemma:H-versus-S}
Let $\om\in\DDD$. Then the following statements hold:
\begin{itemize}
\item[(i)] If $0<p\le2$, then $\|f\|_{S_{\omega}^p}\lesssim\|f\|_{H_{\omega}^p}$ for all $f\in\H(\D)$;
\item[(ii)] If $2\le p<\infty$, then $\|f\|_{H_{\omega}^p}\lesssim\|f\|_{S_{\omega}^p}$ for all $f\in\H(\D)$.
\end{itemize}
Both estimates are in general false for the class $\SSS$ if $p\ne2$ and $\om\in\DD\setminus\DDD$.
\end{lemma}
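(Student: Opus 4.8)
The plan is to sandwich both functionals between copies of a single ``square'' quantity attached to the partition used in the proof of Lemma~\ref{P-lemma}. Fix $K>1$, let $\{r_n\}$ be given by $\widehat{\om}(r_n)=\widehat{\om}(0)K^{-n}$, set $M_n=E(1/(1-r_n))$ and let $\{I_{\om,K}(n)\}$ be the associated blocks; recall that $\om\in\DDD$ forces $M_n\asymp M_{n+1}$, so the blocks grow at a bounded geometric rate. Writing $\beta_n=\sum_{k\in I_{\om,K}(n)}k|\widehat{f}(k)|^2$, the target is the block sum $\Sigma(f)=\sum_{n}K^{-n}\beta_n^{p/2}$. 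I would establish $\|f\|_{S^p_\om}^p\asymp\Sigma(f)$ for every $0<p<\infty$, together with the one-sided comparisons $\Sigma(f)\lesssim\|f\|_{H^p_\om}^p$ for $0<p\le2$ and $\|f\|_{H^p_\om}^p\lesssim\Sigma(f)$ for $2\le p<\infty$; combined, these give (i) and (ii).

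For the first comparison I would upgrade the one-sided estimate \eqref{yte3} to a two-sided one. By Parseval's identity \eqref{parseval1} the inner integral in $S^p_\om$ equals $\pi\sum_k k|\widehat{f}(k)|^2r^{2k}$, so $\|f\|_{S^p_\om}^p=\int_0^1 g(r^2)^{p/2}\om(r)\,dr$ with $g(r)=\pi\sum_k k|\widehat{f}(k)|^2r^{k}$. Applying \eqref{j13} with $p/2$ in place of $p$, run directly for the power series $r\mapsto g(r^2)$ whose nonzero coefficients sit at the even indices, produces block sums over $\{k:2k\in I_{\om,K}(n)\}$; since $M_n\asymp M_{n+1}$ these half-blocks are comparable to the original blocks, and both inequalities in \eqref{j13} then yield $\|f\|_{S^p_\om}^p\asymp\Sigma(f)$. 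Note also that $\beta_n\asymp\int_{A_n}|f'|^2\,dA$, where $A_n=\{z:r_n\le|z|<r_{n+1}\}$, because for $k\in I_{\om,K}(n)$ one has $r_n^{2k}\asymp r_{n+1}^{2k}\asymp1$.

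The core of the proof is the comparison of $H^p_\om$ with $\Sigma(f)$. Since $\widehat{\om}\asymp K^{-n}$ on $A_n$, the definition \eqref{Hclassdef} gives $\|f\|_{H^p_\om}^p=\int_\D\Delta|f|^p\widehat{\om}\,dA\asymp\sum_n K^{-n}\int_{A_n}\Delta|f|^p\,dA$, and by the Hardy--Stein--Spencer formula \eqref{eq:hardy-stein-spencer} the annular masses $\int_{A_n}\Delta|f|^p\,dA$ encode the increments of the integral means $M_p^p(r,f)$. What remains is to compare these masses with the quadratic blocks $\beta_n$; this is the doubling-weighted, dyadically localized form of the classical Littlewood--Paley / Hardy--Littlewood estimates generalizing \cite{HT1978,M-P1983}, and the split at $p=2$ reflects the elementary inclusions between $\ell^2$ and $\ell^p$ on each block (equivalently, the two directions of the Hardy--Littlewood coefficient inequalities). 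I expect this step to be the main obstacle: the factor $|f|^{p-2}$ in $\Delta|f|^p=p^2|f|^{p-2}|f'|^2$ is nonlocal in frequency, so $\int_{A_n}\Delta|f|^p\,dA$ is not controlled by the coefficients in $I_{\om,K}(n)$ alone, and a genuinely global square-function argument (rather than a term-by-term estimate) is needed; this is precisely where the heavy reliance on Lemma~\ref{P-lemma} and on $\om\in\DDD$ enters.

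Finally, for the sharpness statement I would take $f(z)=-\log(1-z)\in\SSS$ and $\om=v_\alpha\in\DD\setminus\DDD$ from \eqref{V}, with $1<\alpha<\infty$. Here $\widehat{f}(k)=1/k$, so $\int_{D(0,r)}|f'|^2\,dA\asymp\log\frac{e}{1-r}$ and hence $\|f\|_{S^p_{v_\alpha}}^p\asymp\int_0^1\frac{dr}{(1-r)(\log\frac{e}{1-r})^{\alpha-p/2}}$, which is finite if and only if $\alpha>1+\frac p2$. On the other hand $\Delta|f|^p\asymp|\log(1-z)|^{p-2}|1-z|^{-2}$, and a direct computation gives $\int_{D(0,r)}\Delta|f|^p\,dA\asymp(\log\frac{e}{1-r})^{p-1}$ for $p>1$, whence $\|f\|_{H^p_{v_\alpha}}^p\asymp\int_0^1\frac{dr}{(1-r)(\log\frac{e}{1-r})^{\alpha-p+1}}$, finite if and only if $\alpha>p$ (for $0<p\le1$ one gets instead $\|f\|_{H^p_{v_\alpha}}<\infty$ for every admissible $\alpha$). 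For $p\ne2$ the thresholds $\alpha=1+\frac p2$ and $\alpha=p$ differ: when $0<p<2$ any $\alpha\in(\max\{1,p\},1+\frac p2]$ makes $\|f\|_{H^p_{v_\alpha}}$ finite while $\|f\|_{S^p_{v_\alpha}}=\infty$, contradicting (i), and when $2<p<\infty$ any $\alpha\in(1+\frac p2,p]$ makes $\|f\|_{S^p_{v_\alpha}}$ finite while $\|f\|_{H^p_{v_\alpha}}=\infty$, contradicting (ii).
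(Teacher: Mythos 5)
There is a genuine gap, and you have located it yourself: the comparisons $\Sigma(f)\lesssim\|f\|_{H^p_\om}^p$ (for $0<p\le2$) and $\|f\|_{H^p_\om}^p\lesssim\Sigma(f)$ (for $2\le p<\infty$) are never proved. Since you also claim $\|f\|_{S^p_\om}^p\asymp\Sigma(f)$, these one-sided comparisons are \emph{equivalent} to statements (i) and (ii), so your plan restates the problem rather than reduces it; the paragraph beginning ``The core of the proof'' ends with an acknowledgment that this is ``the main obstacle'' instead of an argument. Your diagnosis of the obstacle is accurate --- the factor $|f|^{p-2}$ in $\Delta|f|^p$ is not frequency-localized, so $\int_{A_n}\Delta|f|^p\,dA$ cannot be compared blockwise with $\beta_n$ --- but no mechanism is offered to get around it, and a vague appeal to ``a genuinely global square-function argument'' together with a citation of Lemma~\ref{P-lemma} does not supply one.

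The paper's proof shows what the mechanism is, and it never localizes $\Delta|f|^p$ in frequency at all. The nonlocal factor $|f|^{p-2}$ is replaced pointwise by a \emph{radial} quantity. For $0<p<2$ one uses $|f(z)|\le P(|z|,f)$, hence $|f(z)|^{p-2}\ge P(|z|,f)^{p-2}$ because the exponent is negative, which gives
\begin{equation*}
\|f\|_{H^p_\om}^p\gtrsim\int_0^1\frac{\int_{D(0,r)}|f'(z)|^2\,dA(z)}{P(r,f)^2}\,P(r,f)^p\om(r)\,dr;
\end{equation*}
then Jensen's inequality with respect to the probability measure $P(r,f)^p\om(r)\,dr/I_{p,\om}(f)$ and the concave power $x\mapsto x^{p/2}$ yields $\|f\|_{H^p_\om}^{p^2/2}\gtrsim I_{p,\om}(f)^{\frac{p}{2}-1}\|f\|_{S^p_\om}^p$, and the argument closes with the inequality $I_{p,\om}(f)=\int_0^1P(r,f)^p\om(r)\,dr\lesssim\|f\|_{S^p_\om}^p$ from Lemma~\ref{P-lemma}. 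For $2<p<\infty$ one instead bounds $|f|^{p-2}\le M_\infty^{p-2}(r,f)$ on $D(0,r)$, applies H\"older's inequality with exponents $\frac{p}{p-2}$ and $\frac{p}{2}$ to obtain $\|f\|_{H^p_\om}^p\lesssim\left(J^p_\om(f)\right)^{\frac{p-2}{p}}\|f\|_{S^p_\om}^2$, and closes with $J^p_\om(f)\lesssim\|f\|_{S^p_\om}^p+|f(0)|^p$, again from Lemma~\ref{P-lemma}. Thus the block decomposition \eqref{j13} is only ever applied to power series with nonnegative coefficients ($P(r,f)$ and the Parseval series of $\int_{D(0,r)}|f'|^2\,dA$), which is precisely what sidesteps the nonlocality you ran into; $\om\in\DDD$ enters only through Lemma~\ref{P-lemma}. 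Your final paragraph on sharpness is correct and coincides with the paper's argument: the same $f(z)=-\log(1-z)$ and $\om=v_\alpha$, the same exponents $\alpha-\frac{p}{2}$ and $\alpha+1-p$, and the same comparison of thresholds. But the positive half of the lemma is missing from your proposal.
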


\begin{proof}
Let $0<p<2$ and write $I_{p,\om}(f)=\int_0^1P(r,f)^p\om(r)\,dr$. The trivial estimate $|f(z)|\le P(|z|,f)$ and Jensen's inequality yield
	\begin{equation*}
	\begin{split}
	\|f\|_{H_{\omega}^p}^{\frac{p^2}{2}}
	&\ge\left(\int_0^1I_{p,\om}(f)\frac{\int_{D(0,r)}|f'(z)|^2\,dA(z)}{P(r,f)^2}
	\frac{P(r,f)^p\omega(r)\,dr}{I_{p,\om}(f)} \right)^{\frac{p}{2}}\\
	&\ge\int_0^1\left(I_{p,\om}(f)\frac{\int_{D(0,r)}|f'(z)|^2\,dA(z)}{P(r,f)^2}\right)^{\frac{p}{2}}
	\frac{P(r,f)^p \omega(r) dr}{I_{p,\om}(f)}\\
	&=\left(I_{p,\om}(f)\right)^{\frac{p}{2}-1}\|f\|_{S_{\omega}^p}^{p}.
	\end{split}
	\end{equation*}
Since $I_{p,\om}(f)=\int_0^1P(r,f)^p\om(r)\,dr\lesssim\|f\|_{S^p_\om}^p$ by Lemma~\ref{P-lemma}, we deduce (i).

If $2<p<\infty$, then H\"older's inequality yields
	\begin{equation*}
	\begin{split}
	\|f\|_{H^p_\om}^p
	&\lesssim\int_0^1M_\infty^{p-2}(r,f)\left(\int_{D(0,r)}|f'(z)|^2\,dA(z)\right)\om(r)\,dr\\
	&\le\left(\int_0^1M_\infty^p(r,f)\om(r)\,dr\right)^\frac{p-2}{p}
	\left(\int_0^1\left(\int_{D(0,r)}|f'(z)|^2\,dA(z)\right)^\frac{p}{2}\om(r)\,dr\right)^\frac{2}{p}\\
	&=\left(J^p_\om(f)\right)^\frac{p-2}{p}\|f\|_{S^p_\om}^2,\quad f\in\H(\D).
	\end{split}
	\end{equation*}
The assertion (ii) now follows by Lemma~\ref{P-lemma}.

It remains to prove the assertion for $\om\in\DD\setminus\DDD$ and $p\ne2$. For $f(z)=-\log(1-z)$ and $v_\alpha$ defined by \eqref{V} we have
	\begin{equation}\label{eq:Sp-valpha-asymp}
	\|f\|_{S^p_{v_\alpha}}^p\asymp\int_0^1\frac{dr}{(1-r)\left(\log\frac{e}{1-r}\right)^{\alpha-\frac{p}{2}}}
	\end{equation}
and
	$$
	\|f\|_{H^p_{v_\alpha}}^p\asymp\int_0^1\frac{dr}{(1-r)\left(\log\frac{e}{1-r}\right)^{\alpha+1-p}}.
	$$
The last asymptotic is an easy consequence of the fact that, for each $\delta>0$, the function $x\mapsto x^\delta\log\frac{2e^\frac1\delta}{x}$ is increasing on $(0,2)$. Since $\alpha-\frac{p}{2}\le\alpha+1-p$ if and only if $p\le 2$, the assertions follow.
\end{proof}

With these preparations we can prove Theorems~\ref{Thm1} and~\ref{Thm2}. We first prove the latter one.

\medskip

\noindent{\emph{Proof of Theorem~\ref{Thm2}.}} Let $2\le p<\infty$ and $\om\in\DDD$. Then Theorem~\ref{thm-iff}(iii) yields
	$$
	\|f\|_{D^p_{\om_{[p-1]}}}\asymp \|f\|_{D^p_{\widehat{\om}_{[p-2]}}},\quad f\in \H(\D),
	$$
and Theorem~\ref{thm:Hp-Ap-S} gives
	$$
	\|f\|_{D^p_{\widehat{\om}_{[p-2]}}}^p\lesssim\|f\|_{H^p_{\om}}^p+|f(0)|^p,\quad f \in \H(\D),
	$$
provided $\om\in\DD$. Further, since $\om\in\DDD$ by the hypothesis, Lemma~\ref{Lemma:H-versus-S}(ii) implies
	$$
	\|f\|_{H^p_{\om}}\lesssim\|f\|_{S^p_{\om}},\quad f \in \H(\D).
	$$
Lemma~\ref{lemma:H,S-J-univalent} in turn yields
	$$
	\|f\|_{S^p_{\om}}^p\lesssim J^p_{\om}(f), \quad f \in \U,
	$$
and finally, since $\DDD\subset\Dd$, we have
	$$
	J^p_{\om}(f) \lesssim\|f\|_{D^p_{\om_{[p-1]}}}^p,\quad f\in\H(\D),
	$$
by Proposition~\ref{prop:Jp-Ap-om}(iv). By pulling these estimates together and observing that $|f(0)|^p\widehat{\om}(0)\le J^p_\om(f)$ for all $f\in\H(\D)$, we deduce
	\begin{equation}
	\|f\|_{D^p_{\om_{[p-1]}}}^p
	\asymp \|f\|_{D^p_{\widehat{\om}_{[p-2]}}}^p
	\asymp \|f\|_{H^p_{\om}}^p+|f(0)|^p
	\asymp \|f\|_{S^p_{\om}}^p+|f(0)|^p
	\asymp J^p_{\om}(f),\quad f\in\U.
	\end{equation}

It remains to deal with the quantity $I_{p,q,\om}(f)$. It is well known~\cite[Chapter~5]{Duren70} that for each fixed $0<p\le\infty$ we have
	\begin{equation}\label{Eq:maxmod}
	M_p(r,f')\lesssim\frac{M_p\left(\frac{1+r}{2},f\right)}{1-r},\quad 0<r<1,\quad f\in\H(\D).
	\end{equation}
Another result that we will need is \cite[Proposition~8.1]{Pommerenke2} which states that, for each fixed $0<p<\infty$, we have
	\begin{equation}\label{Eq:laplacian}
	\int_0^{2\pi}\Delta|f|^p(re^{i\theta})\,d\theta\lesssim\frac{M^p_{\infty}(r,f)}{1-r},\quad \frac{1}{2}\leq r<1,\quad f\in\SSS.
	\end{equation}
By applying \eqref{Eq:maxmod}, with $p=\infty$, and \eqref{Eq:laplacian}, with $p=2$, we deduce
\begin{equation*}
\begin{split}
I_{p,q,\om}(f)
&\lesssim\int_{\frac{1}{2}}^1M_\infty(r,f')^{(q-2)\frac{p}{q}}\left(M_2(r,f')\right)^{2\frac{p}{q}}(1-r)^{p(1-\frac1q)}\om(r)\,dr\\
&\lesssim\int_\frac12^1\left(\frac{M_\infty\left(\frac{1+r}{2},f\right)}{1-r}\right)^{(q-2)\frac{p}{q}}\left(\frac{M_\infty^2(r,f)}{1-r}\right)^{\frac{p}{q}}(1-r)^{p(1-\frac1q)}\om(r)\,dr\\
&\le\int_\frac12^1M_\infty^p\left(\frac{1+r}{2},f\right)\om(r)\,dr,\quad f\in\SSS.
\end{split}
\end{equation*}
Since \cite[(11) on p.~128]{Pommerenke1} yields
	$$
	M_\infty(r,f)\asymp M_\infty(\r,f),\quad \frac12\le r\le\r\le\frac{1+r}{2}<1,\quad f\in\SSS,
	$$
we deduce $I_{p,q,\om}(f)\lesssim J^p_\om(f)$ for all $f\in\SSS$. It follows that $I_{p,q,\om}(f)+|f(0)|^p\lesssim J^p_\om(f)$ for all $f\in\U$. This part of the proof is valid for any radial weight $\omega$.

For the converse implication, we first observe that \eqref{eq:Jp-Hoelder-asympcalc6x} and \eqref{eq:om-1minusr-intest} imply
	\begin{equation}\label{Eq:Flett}
	J^p_\om(f)\lesssim\int_0^1M_\infty^p(r,f')(1-r)^p\widetilde{\om}(r)\,dr+|f(0)|^p,\quad f\in\H(\D),
	\end{equation}
provided $1<p<\infty$ and $\om\in\Dd$. Moreover, a careful inspection of the proof of \cite[Theorem~5.9]{Duren70} shows that for $0<\alpha<\beta\le\infty$ there exists a constant $C=C(\alpha,\beta)>0$ such that
    \begin{equation}\label{Eq:Duren1}
    M_\beta(r,g)\le
    CM_\alpha\left(\frac{1+r}{2},g\right)(1-r)^{\frac1\beta-\frac1\alpha},\quad 0\le
    r<1,\quad g\in\H(\D).
    \end{equation}
By combining \eqref{Eq:Flett} and \eqref{Eq:Duren1}, with $\beta=\infty$ and $\alpha=q$, we deduce
	$$
	J^p_\om(r)
	\lesssim\int_0^1M_q^p\left(\frac{1+r}{2},f'\right)(1-r)^{p(1-\frac{1}{q})}\widetilde{\om}(r)\,dr+|f(0)|^p,\quad f\in\H(\D).
	$$
Finally, \eqref{eq:Minf-S-asymp-phdisks} and the proof of Proposition~\ref{proposition-norm}(iv) yield $J^p_\om(r)\lesssim I_{p,q,\om}(f)+|f(0)|^p$. This finishes the proof of the theorem.
\hfill$\Box$

\medskip

\noindent{\emph{Proof of Theorem~\ref{Thm1}.}} By the proof of Theorem~\ref{Thm2} it suffices to consider the case $0<p<2$. First observe that Lemma~\ref{lemma:H,S-J-univalent} implies
	$$
	\|f\|_{H^p_{\om}} \lesssim J^p_{\om}(f) ,\quad f  \in \U,
	$$
for each radial weight $\om$. Further, Lemma~\ref{P-lemma} yields
	$$
	J^p_{\om}(f)\lesssim \|f\|_{S^p_{\om}}^p + |f(0)|^p, \quad f \in \H(\D),
	$$
provided $0<p<\infty$ and $\om\in\DDD$. Finally, by Lemma~\ref{Lemma:H-versus-S}(i) we have
	$$
	\|f\|_{S^p_{\om}}
	\lesssim \|f\|_{H^p_{\om}}\quad f \in \H(\D),
	$$
whenever $\om\in\DDD$ and $0<p<2$. By combining these estimates we deduce the assertion of the theorem in the case $0<p<2$.
\hfill$\Box$

\section{Proof of Theorem~\ref{thm:coefficients}}\label{last section}

We will need one more auxiliary result concerning the class $\M$. It is the next lemma which is an unpublished result by J. A. Pel\'aez and the second author.

\begin{lemma}\label{M-epsilon}
Let $\om\in\M$. Then there exists $\beta=\beta(\om)>0$ such that $\om_{[-\beta]}\in\M$.
\end{lemma}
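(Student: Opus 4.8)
The plan is to reduce membership of $\om_{[-\beta]}$ in $\M$ to a single one-sided moment estimate by means of the characterization in Lemma~\ref{M-lemma-1}(iii). The key observation is that $\left(\om_{[-\beta]}\right)_{[\beta]}=\om$ for every $\beta>0$, since $(1-|z|)^{-\beta}(1-|z|)^{\beta}=1$. Hence, applying Lemma~\ref{M-lemma-1}(iii) to the weight $\nu=\om_{[-\beta]}$ and choosing the free parameter there to be exactly $\beta$ (we are allowed to, as the condition holds ``for some, equivalently for each'' value), we see that $\om_{[-\beta]}\in\M$ if and only if there is a constant $C>0$ with
\[
\left(\om_{[-\beta]}\right)_x\le Cx^{\beta}\left(\left(\om_{[-\beta]}\right)_{[\beta]}\right)_x=Cx^{\beta}\om_x,\quad 1\le x<\infty.
\]
Thus it suffices to exhibit one $\beta=\beta(\om)>0$ for which this inequality holds and for which $\om_{[-\beta]}$ is a weight; both will come out of the same computation.

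First I would record a quantitative decay of the moments of $\om$. Writing $C>1$ and $K>1$ for the constants in $\om\in\M$ and iterating $\om_y\ge C\om_{Ky}$ gives $\om_{K^kx}\le C^{-k}\om_x$ for $x\ge1$ and $k\in\N\cup\{0\}$. Fixing an integer $m$ with $2^m\ge K$ and using that $y\mapsto\om_y$ is non-increasing (as $r^y$ is decreasing in $y$), this yields the dyadic decay $\om_{2^jx}\lesssim C^{-j/m}\om_x$ for all $x\ge1$ and $j\in\N\cup\{0\}$. I then fix $\beta=\beta(\om)>0$ small enough that $2^{\beta}C^{-1/m}<1$.

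The heart of the matter is the estimate $\left(\om_{[-\beta]}\right)_x\lesssim x^{\beta}\om_x$, which I would prove by splitting
\[
\left(\om_{[-\beta]}\right)_x=\int_0^{1-1/x}r^x(1-r)^{-\beta}\om(r)\,dr+\int_{1-1/x}^1r^x(1-r)^{-\beta}\om(r)\,dr.
\]
On $[0,1-1/x]$ one has $(1-r)^{-\beta}\le x^{\beta}$, so the first integral is at most $x^{\beta}\om_x$. For the second I would drop $r^x\le1$ and decompose $[1-1/x,1)$ into the pieces $[1-2^{-j}/x,1-2^{-j-1}/x)$, $j\ge0$, on each of which $(1-r)^{-\beta}\le(2^{j+1}x)^{\beta}$ while $\int\om\le\widehat{\om}(1-2^{-j}/x)$. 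Using the elementary bound $\widehat{\om}(1-1/y)\lesssim\om_y$, valid for every weight because $\om_y\ge(1-1/y)^y\widehat{\om}(1-1/y)$ and $(1-1/y)^y$ is bounded below for $y\ge2$, with $y=2^jx$, and then invoking the dyadic decay above, gives
\[
\int_{1-1/x}^1(1-r)^{-\beta}\om(r)\,dr\lesssim x^{\beta}\om_x\sum_{j\ge0}\left(2^{\beta}C^{-1/m}\right)^j\lesssim x^{\beta}\om_x,
\]
the series converging by the choice of $\beta$. This establishes the estimate for $x\ge2$; the range $1\le x<2$ is harmless since all the quantities involved are comparable to positive constants there, and the same dyadic argument on $[\tfrac12,1)$ shows $\int_0^1(1-r)^{-\beta}\om<\infty$, so that $\om_{[-\beta]}$ is indeed a weight (with $\widehat{\om_{[-\beta]}}\ge\widehat{\om}>0$). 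Combining this with the reduction of the first paragraph, Lemma~\ref{M-lemma-1}(iii) yields $\om_{[-\beta]}\in\M$.

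The main obstacle is the control of the near-boundary integral $\int_{1-1/x}^1(1-r)^{-\beta}\om$, where the factor $(1-r)^{-\beta}$ is unbounded: the point is that the geometric decay of the moments supplied by $\om\in\M$ compensates exactly for the polynomial loss $(2^jx)^{\beta}$ produced by this factor, provided $\beta$ is chosen below the threshold dictated by the decay rate, namely $2^{\beta}<C^{1/m}$. It is worth stressing that one should not expect the reverse inequality $\left(\om_{[-\beta]}\right)_x\gtrsim x^{\beta}\om_x$ to hold in general (it may fail for weights in $\M\setminus\DD$), which is precisely why the one-sided characterization in Lemma~\ref{M-lemma-1}(iii), rather than a two-sided comparison of $\left(\om_{[-\beta]}\right)_x$ with $x^{\beta}\om_x$, is the appropriate tool.
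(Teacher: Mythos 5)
Your proof is correct, but it takes a genuinely different route from the paper's. The paper verifies the definition of $\M$ for $\om_{[-\beta]}$ directly: it expands $(1-s)^{-\beta}=\sum_{n\ge0}a_n(\beta)s^n$ with positive coefficients, applies the hypothesis termwise in the form $\om_{n+x}\ge C\om_{K(n+x)}$, and reassembles the series using $1-s^K\le K(1-s)$ to get $(\om_{[-\beta]})_x\ge \frac{C}{K^\beta}(\om_{[-\beta]})_{Kx}$; choosing $\beta<\log_K C$ makes the constant $C/K^\beta>1$, and the proof is over in a few lines. You instead reduce the statement, via the identity $\left(\om_{[-\beta]}\right)_{[\beta]}=\om$ and Lemma~\ref{M-lemma-1}(iii) applied to $\om_{[-\beta]}$ with parameter $\beta$, to the one-sided moment bound $\left(\om_{[-\beta]}\right)_x\lesssim x^{\beta}\om_x$, which you then prove by a dyadic decomposition of $[1-1/x,1)$ combined with the geometric decay $\om_{2^jx}\lesssim C^{-j/m}\om_x$ (a correct consequence of iterating the $\M$-inequality and the monotonicity of moments) and the elementary bound $\widehat{\om}(1-1/y)\lesssim\om_y$. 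All steps check out, including the two points that are easy to overlook and that you handle explicitly: the integrability of $\om_{[-\beta]}$, which must be secured before Lemma~\ref{M-lemma-1} can be invoked at all, and the harmless range $1\le x<2$. The smallness thresholds are essentially the same in both arguments ($\beta<\log_K C$ in the paper versus $2^{\beta}C^{-1/m}<1$ for you, with $2^m\ge K$). What the paper's computation buys is brevity and an explicit doubling constant for $\om_{[-\beta]}$ obtained straight from the definition, with integrability implicit in the same series estimate; what your argument buys is a transparent real-variable explanation of the mechanism --- the geometric decay of moments supplied by $\om\in\M$ beats the polynomial loss caused by $(1-r)^{-\beta}$ near the boundary --- and a nice illustration of how the characterization (iii) can substitute for the definition. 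Your closing remark is also apt: by Lemma~\ref{D-hat-lemma}(iii) the reverse inequality $x^{\beta}\om_x\lesssim\left(\om_{[-\beta]}\right)_x$ would amount to $\om_{[-\beta]}\in\DD$, which is not available under the hypothesis $\om\in\M$ alone, so the one-sided tool is indeed the right one.
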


\begin{proof}
Choose $\beta=\beta(\om)<\log_K C$, where $C=C(\om)>1$ and $K=K(\om)\in\N$ are such that
	$$
  \om_x\ge C \om_{Kx},\quad 1\le x<\infty.
  $$
Let $\{a_n(\beta)\}_{n=0}^\infty$ be positive numbers such that
	$$
	(1-s)^{-\beta}=\sum_{n=0}^\infty a_n(\beta)s^n,\quad 0\le s<1.
	$$
Then
	\begin{equation*}
	\begin{split}
	(\om_{[-\beta]})_x
	=\sum_{n=0}^\infty a_n(\beta)\om_{n+x}
	&\ge C\sum_{n=0}^\infty a_n(\beta)\om_{K(n+x)}\\
	&=C\int_0^1s^{Kx}\sum_{n=0}^\infty a_n(\beta)s^{Kn}\om(s)\,ds\\
	&=C\int_0^1s^{Kx}(1-s^K)^{-\beta}\om(s)\,ds
	\ge\frac{C}{K^\beta}(\om_{[-\beta]})_{Kx},\quad x\ge 1,	
	\end{split}
	\end{equation*}
because $(1-s^K)=(1-s)(1+s+\cdots+s^{K-1})\le K(1-s)$. By choosing $\b>0$ sufficiently small, we obtain $\om_{[-\beta]}\in\M$ by the definition.
\end{proof}

With Lemma~\ref{M-epsilon} in hand we can compare $\sum_{k=1}^\infty|\widehat{f}(k)|^pk^{p-1}\om_{2k}$ and $\|f\|_{S_{\omega}^p}^p$ when $\om\in\M$.

\begin{lemma}\label{Lemma:HL-versus-S}
Let $\om\in\M$. Then the following statements hold:
\begin{itemize}
\item[(i)] If $0<p\le2$, then $\sum_{k=1}^\infty|\widehat{f}(k)|^pk^{p-1}\om_{2k}\lesssim\|f\|_{S_{\omega}^p}^p$ for all $f\in\H(\D)$;
\item[(ii)] If $2\le p<\infty$, then $\|f\|_{S_{\omega}^p}^p\lesssim\sum_{k=1}^\infty|\widehat{f}(k)|^pk^{p-1}\om_{2k}$ for all $f\in\H(\D)$.
\end{itemize}
Both estimates are in general false for the class $\SSS$ if $p\ne2$ and $\om\in\DD\setminus\DDD$.
\end{lemma}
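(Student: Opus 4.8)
The plan is to reduce both statements to a single comparison between an integral of a power series and a coefficient sum, and to exploit that the two quantities coincide exactly at $p=2$. By Parseval's identity \eqref{parseval1} we have $\int_{D(0,r)}|f'|^2\,dA=\pi\sum_{k\ge1}k|\widehat{f}(k)|^2r^{2k}$, so, setting $s=\tfrac p2$, $a_k=k|\widehat{f}(k)|^2\ge0$ and $g(r)=\sum_{k\ge1}a_kr^{2k}$, we get $\|f\|_{S^p_\om}^p=\pi^{s}\int_0^1 g(r)^{s}\om(r)\,dr$, while the coefficient sum equals $\sum_{k\ge1}a_k^{s}k^{s-1}\om_{2k}$ because $|\widehat{f}(k)|^pk^{p-1}=a_k^{s}k^{s-1}$. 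When $s=1$ Fubini gives $\int_0^1 g\,\om=\sum_k a_k\om_{2k}$, i.e. both sides agree at $p=2$; thus (i) and (ii) are the two one-sided versions, for $s\le1$ and $s\ge1$ respectively, of an inequality that is an equality at the pivot $s=1$. The whole problem becomes: compare $\int_0^1 g^{s}\om$ with $\sum_k a_k^{s}k^{s-1}\om_{2k}$.

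I would carry this out through a geometric block decomposition dictated by $\M$. Since $\om\in\M$ there are $K>1$ (which I enlarge to an integer) and $C>1$ with $\om_x\ge C\om_{Kx}$, so the moments $(\om_{2K^n})_n$ decay at least geometrically. Put $J_n=\{k\in\N:K^n\le k<K^{n+1}\}$ and $b_n=\sum_{k\in J_n}a_k$. On the frequency block $J_n$ the relevant radii are $r\approx1-K^{-n}$, and there $g(r)$ is comparable to the partial sum $\sum_{m\le n}b_m$; this localization converts $\int_0^1 g^{s}\om$ into a weighted sum of the $(\sum_{m\le n}b_m)^s$. The within-block arithmetic is the routine power-mean inequality: for $s\ge1$ one has $b_n^s\le|J_n|^{s-1}\sum_{k\in J_n}a_k^s\asymp\sum_{k\in J_n}a_k^sk^{s-1}$, and for $0<s\le1$ the reverse inequality holds, with $|J_n|\asymp K^n\asymp k$ for $k\in J_n$. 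Hence on each block the block sum $b_n^s$ and the diagonal sum $\sum_{k\in J_n}a_k^sk^{s-1}$ are ordered in exactly the direction required by (i) and (ii).

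The hard part will be the cross-block summation, i.e. passing from the cumulative sums $(\sum_{m\le n}b_m)^s$ produced by the integral to the per-frequency weights $\om_{2k}$ of the target sum. This is a Hardy-type inequality on the moment sequence, and it is precisely here that only $\M$ (rather than $\DD$) is available: for $\DD$ weights one could quote the two-sided block equivalence \eqref{j13}, but for $\M$ weights the moments lack two-sided comparability across a block (one only has $\om_{2K^n}\ge C\om_{2K^{n+1}}$, not the reverse), so the coarse per-block tail need not match $\om_{2k}$. My plan is to control the off-diagonal interaction using the geometric decay $\om_x\ge C\om_{Kx}$ in its summed form (Lemma~\ref{M-lemma}(ii), $\int_x^\infty\om_y^\gamma\,dy/y\lesssim\om_x^\gamma$), and to create the $\varepsilon$ of room needed to sum the resulting geometric series by inserting $(1-r)^{\beta}(1-r)^{-\beta}$ and invoking Lemma~\ref{M-epsilon}: the factor $(1-r)^{\beta}$ supplies summable decay while $\om_{[-\beta]}=\om(r)(1-r)^{-\beta}$ remains in $\M$, so its moments still obey the required tail bound. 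This is the step I expect to absorb most of the work.

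Finally I would establish sharpness with the standard test pair: the weight $v_\alpha\in\DD\setminus\DDD$ from \eqref{V} (which therefore lies outside $\M$) and $f(z)=-\log(1-z)\in\SSS$, for which $\widehat{f}(k)=1/k$. Since $v_\alpha\in\DD$ one has $(v_\alpha)_{2k}\asymp\widehat{v_\alpha}(1-\tfrac1{2k})\asymp(\log k)^{1-\alpha}$, whence $\sum_{k}|\widehat{f}(k)|^pk^{p-1}(v_\alpha)_{2k}=\sum_k k^{-1}(\log k)^{1-\alpha}\asymp\int_0^1\frac{dr}{(1-r)(\log\frac e{1-r})^{\alpha-1}}$, while \eqref{eq:Sp-valpha-asymp} gives $\|f\|_{S^p_{v_\alpha}}^p\asymp\int_0^1\frac{dr}{(1-r)(\log\frac e{1-r})^{\alpha-p/2}}$. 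The two critical logarithmic exponents are $\alpha-1$ and $\alpha-\tfrac p2$, differing by $\tfrac p2-1$; choosing $\alpha$ so that one of these integrals converges and the other diverges (for $0<p<2$ take $\alpha\in(1+\tfrac p2,2]$, so the norm is finite but the sum infinite; for $2<p<\infty$ take $\alpha\in(2,1+\tfrac p2]$, reversing the roles) breaks the claimed inequality in the appropriate direction, showing that $\M$ cannot be dropped.
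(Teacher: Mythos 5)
Your Parseval reduction, the within-block power-mean inequalities, and the counterexample section are all correct; the counterexample is in fact exactly the paper's (same $f(z)=-\log(1-z)$, same $v_\alpha$ from \eqref{V}, same window for $\alpha$, matching \eqref{eq:Sp-valpha-asymp} and \eqref{eq:Sp-coeffsum-asymp}). The genuine gap is that the cross-block summation, which you yourself flag as ``the step I expect to absorb most of the work,'' is never carried out, and that step is the entire analytic content of the lemma. Concretely: in (i), localizing $\int_0^1 g^s\om$ to annuli $I_n=[1-K^{-n},1-K^{-n-1})$ produces the localized masses $\int_{I_n}\om$, and for a general $\om\in\M$ these are \emph{not} comparable to the moments $\om_{2K^n}$ you need: $\om$ may vanish identically on any given $I_n$ while $\om_{2K^n}>0$, because $\M$ gives only the one-sided decay $\om_x\ge C\om_{Kx}$ and nothing like the two-sided block comparability \eqref{j13} available for $\DD$. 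The moment $\om_{2K^n}$ collects mass from all annuli $I_m$, so bounding $\sum_n b_n^s\om_{2K^n}$ by $\sum_m\bigl(\int_{I_m}\om\bigr)\bigl(\sum_{n\le m}b_n\bigr)^s$ requires controlling double sums in which $b_n^s$ with $n>m$ multiplies mass sitting on $I_m$, damped by exponential factors that do not match the ones $g(r)^s$ produces on $I_m$; moreover, for $s<1$ even the ``diagonal'' comparison of $\sum_{n\le m}b_n^s$ against $\bigl(\sum_{n\le m}b_n\bigr)^s$ loses a factor as large as $(m+1)^{1-s}$. In (ii) the situation is worse still: your claim that on $I_n$ the function $g(r)$ is \emph{comparable} to the partial sum $\sum_{m\le n}b_m$ is false in the upper-bound direction without taming the high-frequency tail, and neither Lemma~\ref{M-lemma}(ii) nor Lemma~\ref{M-epsilon}, as stated, performs that control. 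The statement is true, so your route may be completable, but as written it proves the easy parts and announces the hard one.

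For contrast, the paper's proof avoids any decomposition and hence the cross-block problem entirely. For each fixed $r$ it applies H\"older's inequality in the index $k$: in (i) with exponents $\frac2p$ and $\frac{2}{2-p}$, giving $\bigl(\sum_k k|\widehat{f}(k)|^2r^{2k}\bigr)^{p/2}\ge\sum_k|\widehat{f}(k)|^pk^{p/2}r^{2k}\big/\bigl(\sum_kr^{2k}\bigr)^{(2-p)/2}$; in (ii) after splitting $k|\widehat{f}(k)|^2r^{2k}=|\widehat{f}(k)|^2k^xr^{4k/p}\cdot k^{1-x}r^{2(p-2)k/p}$ with an auxiliary exponent $x$. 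Since $\bigl(\sum_kr^{2k}\bigr)^{(2-p)/2}\lesssim(1-r)^{-(1-p/2)}$ (and analogously in (ii)), integration in $r$ converts these pointwise bounds into moments of the tilted weights $\om_{[1-\frac p2]}$, respectively $\om_{[xp/2+1-p]}$, and everything reduces to the moment characterization Lemma~\ref{M-lemma-1}(iii) applied to the tilted weight. Lemma~\ref{M-epsilon} is needed only in (ii), to guarantee $\om_{[xp/2+1-p]}\in\M$ for $x$ close enough to $2/p'$: the ``$\varepsilon$ of room'' lives in the tilt of the weight, not in summing a geometric series over blocks. If you want to salvage your block scheme, you would essentially have to reprove this pointwise H\"older step in disguise.
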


\begin{proof}
Let first $0<p<2$. Then Parseval's identity, H\"older's inequality and Lemma~\ref{M-lemma-1}(iii), with $\beta=1-\frac{p}{2}$, yield
	\begin{equation*}
	\begin{split}
	\|f\|_{S^p_\om}^p
	&=\int_0^1\left(\pi\sum_{k=1}^\infty k|\widehat{f}(k)|^2r^{2k}\right)^{\frac{p}{2}}\omega(r)\,dr
	\ge\pi^\frac{p}{2}\int_0^1\frac{\sum_{k=1}^\infty|\widehat{f}(k)|^pk^{\frac{p}{2}}r^{2k}}{\left(\sum_{k=1}^\infty r^{2k}\right)^\frac{2-p}{2}}\omega(r)\,dr\\
	&\ge\pi^\frac{p}{2}\sum_{k=1}^\infty|\widehat{f}(k)|^pk^{\frac{p}{2}}
	\left(\om_{[1-\frac{p}{2}]}\right)_{2k}
	\gtrsim\sum_{k=1}^\infty|\widehat{f}(k)|^pk^{p-1}\om_{2k},
	\end{split}
	\end{equation*}
and thus (i) is valid.

Let now $2<p<\infty$. By Lemma~\ref{M-epsilon} we may choose $x=x(p,\omega)<\frac2{p'}$ sufficiently large such that $\om_{[xp/2+1-p]}\in\M$. Then Parseval's identity and H\"older's inequality together with standard estimates yield
	\begin{equation*}
	\begin{split}
	\|f\|_{S^p_\om}^p
	&=\pi^\frac{p}{2}\int_0^1\left(\sum_{k=1}^\infty |\widehat{f}(k)|^2k^xr^{\frac{4}{p}k}k^{1-x}r^{\frac{2(p-2)}{p}k}\right)^{\frac{p}{2}}\omega(r)\,dr\\
	&\le\pi^\frac{p}{2}\int_0^1\left(\sum_{k=1}^\infty|\widehat{f}(k)|^pk^{\frac{xp}{2}}r^{2k}\right)
	\left(\sum_{k=1}^\infty k^{(1-x)\frac{p}{p-2}}r^{2k}\right)^{\frac{p-2}{2}}\omega(r)\,dr\\
	&\asymp\sum_{k=1}^\infty|\widehat{f}(k)|^pk^{\frac{xp}{2}}\left(\om_{[xp/2+1-p]}\right)_{2k},
	\end{split}
	\end{equation*}
where
	$$
	\left(\om_{[xp/2+1-p]}\right)_{2k}\lesssim k^{p-1-\frac{xp}{2}}\om_{2k},\quad k\in\N,
	$$
by Lemma~\ref{M-lemma-1}(iii) with $\beta=-\left(\frac{xp}{2}+1-p\right)$. Therefore (ii) is proved.
	
It remains to prove the assertion for $\om\in\DD\setminus\DDD$ and $p\ne2$. For $f(z)=-\log(1-z)$ and $v_\alpha$ defined by \eqref{V} we have \eqref{eq:Sp-valpha-asymp}
and
	\begin{equation}\label{eq:Sp-coeffsum-asymp}
	\sum_{k=1}^\infty|\widehat{f}(k)|^pk^{p-1}(v_\alpha)_{2k}
	\asymp\sum_{k=1}^\infty|\widehat{f}(k)|^pk^{p-1}\widehat{v_\alpha}\left(1-\frac1k\right)
	\asymp\sum_{k=1}^\infty\frac{1}{k\left(\log(k+1)\right)^{\alpha-1}}.
	\end{equation}
The assertion follows by choosing $\alpha$ between 2 and $1+\frac{p}{2}$.
\end{proof}

The statement in the following lemma is trivially true for all radial weights if $p=1$.

\begin{lemma}\label{Thm:coefficients}
Let $1<p<\infty$ and $\om\in\M$. Then
	$$
	J^p_\om(f)\lesssim\sum_{k=0}^\infty|\widehat{f}(k)|^p(k+1)^{p-1}\om_k,\quad f\in\H(\D).
	$$
This estimate is in general false for the class $\SSS$ if $\om\in\DD\setminus\DDD$.
\end{lemma}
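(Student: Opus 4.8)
The plan is to first discard the constant term and reduce everything to a power series with nonnegative coefficients. Writing $a_k=|\widehat{f}(k)|$ and $P(r,f)=\sum_{k\ge1}a_kr^k$, the triangle inequality gives $M_\infty(r,f)\le a_0+P(r,f)$, so that $M_\infty^p(r,f)\le2^{p-1}(a_0^p+P(r,f)^p)$. Integrating against $\om$ and using $\int_0^1\om(r)\,dr=\om_0$, the constant term contributes $a_0^p\om_0$, which is exactly the $k=0$ summand on the right. Thus the whole matter comes down to proving $\int_0^1P(r,f)^p\om(r)\,dr\lesssim\sum_{k\ge1}a_k^pk^{p-1}\om_k$.

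The key step, and the place where the main difficulty lies, is an application of H\"older's inequality designed to avoid the logarithmic loss that a naive splitting produces. A crude split $a_kr^k=(a_kk^{(p-1)/p}r^{k/p})(k^{-(p-1)/p}r^{k/p'})$ leaves the factor $\bigl(\sum_kk^{-1}r^k\bigr)^{p-1}=(\log\frac1{1-r})^{p-1}$, which is genuinely too large (one checks it fails already for $\om(r)=(1-r)^s$). To fix this I will introduce a parameter $m>-1$ and instead split
	\begin{equation*}
	a_kr^k=\Bigl(a_k(1-r)^{-\frac{m+1}{p'}}k^{-\frac{m}{p'}}r^{\frac kp}\Bigr)\cdot\Bigl((1-r)^{\frac{m+1}{p'}}k^{\frac{m}{p'}}r^{\frac{k}{p'}}\Bigr).
	\end{equation*}
Here the second factor is chosen so that its $p'$-th powers sum to a bounded function: since $\sum_{k\ge1}k^mr^k\asymp(1-r)^{-(m+1)}$ for $m>-1$, we get $\sum_{k\ge1}(1-r)^{m+1}k^mr^k\le C_m<\infty$ uniformly in $r\in(0,1)$. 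H\"older's inequality then yields
	\begin{equation*}
	P(r,f)^p\lesssim(1-r)^{-(m+1)(p-1)}\sum_{k\ge1}a_k^pk^{-m(p-1)}r^k,\quad 0\le r<1.
	\end{equation*}

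The final step is to integrate this against $\om$, apply Fubini, and absorb the negative power of $(1-r)$ using the hypothesis $\om\in\M$. Set $\delta=(m+1)(p-1)$; then $\int_0^1r^k(1-r)^{-\delta}\om(r)\,dr=(\om_{[-\delta]})_k$, so that $\int_0^1P(r,f)^p\om(r)\,dr\lesssim\sum_{k\ge1}a_k^pk^{-m(p-1)}(\om_{[-\delta]})_k$. By Lemma~\ref{M-epsilon} there is $\beta_0=\beta_0(\om)>0$ with $\om_{[-\beta_0]}\in\M$; I will choose $m$ so that $\delta=(m+1)(p-1)=\beta_0$, which forces $m=\frac{\beta_0}{p-1}-1>-1$, as required. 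Applying Lemma~\ref{M-lemma-1}(iii) to the weight $\om_{[-\beta_0]}\in\M$ with exponent $\beta_0$ (and noting $(\om_{[-\beta_0]})_{[\beta_0]}=\om$) gives precisely $(\om_{[-\beta_0]})_k\lesssim k^{\beta_0}\om_k=k^{(m+1)(p-1)}\om_k$. Substituting, the powers of $k$ combine as $k^{-m(p-1)}\cdot k^{(m+1)(p-1)}=k^{p-1}$, and we obtain $\int_0^1P(r,f)^p\om(r)\,dr\lesssim\sum_{k\ge1}a_k^pk^{p-1}\om_k$, completing the estimate. I expect the extraction of a working $m$ from $\M$, via Lemma~\ref{M-epsilon} and the reinterpretation of Lemma~\ref{M-lemma-1}(iii) for the shifted weight, to be the decisive point; everything else is bookkeeping. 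For the sharpness claim I will take $f(z)=-\log(1-z)\in\SSS$ and $v_\alpha\in\DD\setminus\DDD$ from \eqref{V}: here $|\widehat{f}(k)|^pk^{p-1}=k^{-1}$ and $(v_\alpha)_k\asymp(\log k)^{1-\alpha}$, so the coefficient sum behaves like $\sum_k k^{-1}(\log k)^{1-\alpha}$ (finite iff $\alpha>2$), while by \eqref{eq:Jp-valpha-asymp} $J^p_{v_\alpha}(f)\asymp\int_0^1(1-r)^{-1}(\log\frac{e}{1-r})^{p-\alpha}\,dr$ (infinite iff $\alpha\le p+1$); any $\alpha\in(2,p+1)$, which exists since $p>1$, yields the desired contradiction.
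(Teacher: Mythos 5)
Your proof is correct and follows essentially the same route as the paper's: a H\"older split with a free parameter (your $m$ corresponds to the paper's $x$ via $m=xp'$, and your $\delta=(m+1)(p-1)$ is exactly the paper's $\beta=xp+p-1$), the shift of the weight into $\M$ via Lemma~\ref{M-epsilon}, and Lemma~\ref{M-lemma-1}(iii) applied to the shifted weight; the counterexample is the same pair $f(z)=-\log(1-z)$, $v_\alpha$, with your window $\alpha\in(2,p+1)$ in place of the paper's choice $\alpha=p+1$. The only cosmetic differences are that you carry the $(1-r)$-factors inside the H\"older split (so the dual sum is bounded by a constant rather than producing a power of $(1-r)$ to be absorbed into the weight) and that you treat the $k=0$ term explicitly.
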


\begin{proof}
Let $1<p<\infty$. By Lemma~\ref{M-epsilon} we may choose $x=x(\om,p)>\frac{1-p}{p}$ sufficiently small such that $\om_{[1-p-xp]}\in\M$. Then H\"older's inequality and Lemma~\ref{M-lemma-1}(iii) with $\beta=xp+p-1$ yield
	\begin{equation*}
	\begin{split}
	J^p_{\om}(f)
	&\le\int_0^1\sum_{n=0}^\infty\frac{|\widehat{f}(k)|^pr^k}{(k+1)^{xp}}
	\left(\sum_{k=0}^\infty(k+1)^{xp'}r^k\right)^{p-1}\om(r)\,dr\\
	&\asymp\sum_{k=1}^\infty\frac{|\widehat{f}(k)|^p}{(k+1)^{xp}}\left(\om_{[1-p-xp]}\right)_k
	\lesssim\sum_{k=1}^\infty|\widehat{f}(k)|^p(k+1)^{p-1}\om_k,
	\end{split}
	\end{equation*}
and thus the assertion is proved.

It remains to prove the assertion for $\om\in\DD\setminus\DDD$. For $f(z)=-\log(1-z)$ and $v_\alpha$ defined by \eqref{V} we have \eqref{eq:Jp-valpha-asymp}
and \eqref{eq:Sp-coeffsum-asymp}. The assertion follows by applying Lemma~\ref{D-hat-lemma}(iv) and choosing $\alpha=p+1$.
\end{proof}

We are now in position to prove the last main result of the paper.

\medskip

\noindent{\emph{Proof of Theorem~\ref{thm:coefficients}.}}
By Lemmas~\ref{Thm:coefficients}, \ref{D-hat-lemma}(iv) and~\ref{Lemma:HL-versus-S} we have
	\begin{equation*}
	\begin{split}
	J^p_\om(f)
	\lesssim\sum_{k=0}^\infty|\widehat{f}(k)|^p(k+1)^{p-1}\om_k
	\lesssim\sum_{k=0}^\infty|\widehat{f}(k)|^p(k+1)^{p-1}\om_{2k}
	\lesssim\|f\|_{S^p_\om}^p,\quad f\in\H(\D),
	\end{split}
	\end{equation*}
and the first assertion follows by Theorem~\ref{Thm1}.

To obtain the assertion on the close-to-convex functions, we follow the argument in the proof of \cite[Theorem~5]{HT1978}. By \cite[p.~164]{CP1966} we have
	$$
	|a_k|kr^k\le2M_\infty(r,f)+k^{-\frac12}\left(\frac{1+r}{1-r}\right)^\frac12M_\infty(r,f),\quad 0\le r<1,\quad k\in\N,
	$$
and hence
	\begin{equation}\label{eq:ctcf-ak-Minf-estim}
	|a_k|kr^k\le4M_\infty(r,f),\quad 0\le r\le1-\frac1k,\quad k\in\N.
	\end{equation}
Lemmas~\ref{D-hat-lemma}(ii) and~\ref{Lemma:weights-in-R}(ii) imply $\widehat{\widetilde{\om}}\asymp\widehat{\om}$, and hence Lemma~\ref{AuxLemmaEmbedding} yields $J^p_\om(f)\asymp J^p_{\widetilde{\om}}(f)$ for all $f\in\H(\D)$. This together with \eqref{eq:ctcf-ak-Minf-estim} and Lemma~\ref{Thm:coefficients} gives
	\begin{equation*}
	\begin{split}
	J^p_\om(f)
	&\asymp J^p_{\widetilde\om}(f)
	=\sum_{k=1}^\infty\int_{1-\frac1k}^{1-\frac1{k+1}}M_\infty^p(r,f)\widetilde{\om}(r)\,dr\\
	&\ge2^{-2p-1}\sum_{k=1}^\infty|a_k|^pk^{p-1}\widehat{\om}\left(1-\frac{1}{k+1}\right)\left(1-\frac{1}{k+1}\right)^{kp}\\
	&\asymp\sum_{k=1}^\infty|a_k|^pk^{p-1}\om_k\gtrsim J^p_\om(f),
	\end{split}
	\end{equation*}
provided $1\le p<\infty$ and $\om\in\DDD$. This completes the proof of the theorem.
\hfill$\Box$

\medskip

We complete the section and the paper by the following result which compares $\sum_{k=1}^\infty|\widehat{f}(k)|^pk^{p-1}\om_{k}$ and $\|f'\|^p_{A^p_{\widehat{\om}_{[p-2]}}}$ under natural hypothesis on $\om$ in our setting.

\begin{proposition}\label{lemma:HL-(p-2)}
Let $0<p<\infty$ and let $\om$ be a radial weight such that $\widehat{\omega}_{[p-2]}\in\DD$. Then the following statements hold:
\begin{itemize}
\item[\rm(i)] If $0<p\le2$, then $\sum_{k=1}^\infty|\widehat{f}(k)|^pk^{p-1}\om_{k}\lesssim\|f'\|^p_{A^p_{\widehat{\om}_{[p-2]}}}$ for all $f\in\H(\D)$;
\item[\rm(ii)] If $2\le p<\infty$, then $\|f'\|^p_{A^p_{\widehat{\om}_{[p-2]}}}\lesssim\sum_{k=1}^\infty|\widehat{f}(k)|^pk^{p-1}\om_{k}$ for all $f\in\H(\D)$.
\end{itemize}
\end{proposition}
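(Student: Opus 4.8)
The plan is to derive both inequalities from the classical Hardy--Littlewood coefficient theorem (see \cite[Chapter~6]{Duren70}) applied radius by radius, and then to reduce everything to a purely weight--theoretic moment comparison. First I would record the exact identity $(\widehat{\om})_x=\om_{x+1}/(x+1)$, obtained by a single integration by parts from $\widehat{\om}\,'=-\om$ and $\widehat{\om}(1)=0$. Using it together with $\widehat{f'}(j)=(j+1)\widehat{f}(j+1)$, the coefficient sum acquires the intrinsic form
	\begin{equation*}
	\sum_{k=1}^\infty|\widehat{f}(k)|^pk^{p-1}\om_k=\sum_{j=0}^\infty|\widehat{f'}(j)|^p\left(\widehat{\om}\right)_j .
	\end{equation*}
Writing $g=f'$ and $\nu=\widehat{\om}_{[p-2]}$, the relevant norm is $\|g\|_{A^p_\nu}^p=2\pi\int_0^1M_p^p(r,g)\nu(r)r\,dr$, so the whole statement amounts to comparing $\sum_j|\widehat{g}(j)|^p(\widehat{\om})_j$ with $\|g\|_{A^p_\nu}^p$.

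For part (i) I would use the Hardy--Littlewood inequality in the form $\sum_j|\widehat{g}(j)|^p(j+1)^{p-2}r^{jp}\lesssim M_p^p(r,g)$, valid for $0<p\le2$, and for part (ii) its converse $M_p^p(r,g)\lesssim\sum_j|\widehat{g}(j)|^p(j+1)^{p-2}r^{jp}$, valid for $2\le p<\infty$; both follow from the circle statements applied to the dilation $g_r(z)=g(rz)$. Multiplying by $\nu(r)r$, integrating over $(0,1)$ and applying Fubini's theorem, I obtain in the two cases
	\begin{equation*}
	\sum_{j=0}^\infty|\widehat{g}(j)|^p(j+1)^{p-2}\nu_{jp+1}\lesssim\|g\|_{A^p_\nu}^p
	\qquad\text{and}\qquad
	\|g\|_{A^p_\nu}^p\lesssim\sum_{j=0}^\infty|\widehat{g}(j)|^p(j+1)^{p-2}\nu_{jp+1}.
	\end{equation*}
Since $\nu=\widehat{\om}_{[p-2]}\in\DD$, Lemma~\ref{D-hat-lemma}(iv) gives $\nu_x\asymp\nu_y$ whenever $x\asymp y$, so $\nu_{jp+1}\asymp\nu_j$ and in each case the right-hand sum may be replaced by $\sum_j|\widehat{g}(j)|^p(j+1)^{p-2}\nu_j$ up to constants. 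Comparing with the target $\sum_j|\widehat{g}(j)|^p(\widehat{\om})_j$, both assertions reduce to the single moment equivalence
	\begin{equation}\label{eq:plan-moment}
	(x+1)^{p-2}\left(\widehat{\om}_{[p-2]}\right)_x\asymp\left(\widehat{\om}\right)_x,\qquad x\ge1,
	\end{equation}
where part (i) uses the inequality $\lesssim$ and part (ii) the inequality $\gtrsim$.

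The crux, and the step I expect to be the main obstacle, is \eqref{eq:plan-moment}. The natural route is through the $\DD$--machinery: the hypothesis $\widehat{\om}_{[p-2]}\in\DD$ yields $\left(\widehat{\om}_{[p-2]}\right)_x\asymp\widehat{\widehat{\om}_{[p-2]}}(1-1/x)$ by the moment characterization of $\DD$, while the integration by parts behind \eqref{eq:thm1-fubini-calc1} gives $\widehat{\widehat{\om}_{[p-2]}}(r)=(p-1)^{-1}\bigl(\widehat{\om}(r)(1-r)^{p-1}-\widehat{\om_{[p-1]}}(r)\bigr)$; combined with the lower bound \eqref{eq:omhat-q-int-compare} this identifies $\widehat{\widehat{\om}_{[p-2]}}(r)\asymp\widehat{\om}(r)(1-r)^{p-1}$, and then $(\widehat{\om})_x=\om_{x+1}/(x+1)$ together with $\om\in\DD$ (equivalent to $\widehat{\om}_{[p-2]}\in\DD$ via Lemma~\ref{D-hat-lemma}(v) when $p>1$) closes the chain. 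The genuinely delicate range is $0<p\le1$, where $\DD$--membership of $\widehat{\om}_{[p-2]}$ no longer forces $\om\in\DD$ and one cannot factor the comparison through $\widehat{\om}(1-1/x)$; there I would estimate $\left(\widehat{\om}_{[p-2]}\right)_x=\int_0^1 r^x(1-r)^{p-2}\widehat{\om}(r)\,dr$ directly against $\om_x$ by the moment bounds of the type developed in the proof of Theorem~\ref{thm:A2-weight-M}, exploiting that $\DD$ prevents $\widehat{\om}$ from vanishing too fast near the boundary. Establishing \eqref{eq:plan-moment} cleanly on the whole range $0<p<\infty$ is the heart of the matter; once it is available, the two displayed Hardy--Littlewood estimates finish the proof at once.
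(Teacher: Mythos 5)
Your reduction---the Hardy--Littlewood coefficient inequalities applied to the dilations of $f'$, integration against $\widehat{\om}_{[p-2]}(r)\,r\,dr$, the moment doubling $\nu_{jp+1}\asymp\nu_j$ for $\nu=\widehat{\om}_{[p-2]}\in\DD$ via Lemma~\ref{D-hat-lemma}(iv), and the exact identity $\bigl(\widehat{\om}\bigr)_x=\om_{x+1}/(x+1)$---is correct, and it is in substance the same first step as the paper's proof. Two small slips: you should record that $\om_{x+1}\asymp\om_x$ (true for every radial weight with $\widehat{\om}>0$, and needed to pass from $\om_{x+1}$ to $\om_x$), and your direction labels are interchanged: part (i) requires $\bigl(\widehat{\om}\bigr)_x\lesssim x^{p-2}\bigl(\widehat{\om}_{[p-2]}\bigr)_x$, while part (ii) requires the reverse inequality.

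The genuine gap is the weight-theoretic endgame. The two-sided equivalence $x^{p-2}\bigl(\widehat{\om}_{[p-2]}\bigr)_x\asymp\bigl(\widehat{\om}\bigr)_x$ you set out to prove is \emph{false} in general precisely in the range $0<p\le1$ that you flag as ``the heart of the matter''. Indeed, since $\bigl(\widehat{\om}_{[p-2]}\bigr)_{[2-p]}=\widehat{\om}$, the direction $x^{p-2}\bigl(\widehat{\om}_{[p-2]}\bigr)_x\lesssim\bigl(\widehat{\om}\bigr)_x$ states, by Lemma~\ref{M-lemma-1}(iii) with $\b=2-p>0$, that $\widehat{\om}_{[p-2]}\in\M$, which does not follow from $\widehat{\om}_{[p-2]}\in\DD$. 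Concretely, for $0<p\le1$ and $1<\a<\infty$ the function $W(r)=(1-r)^{1-p}\bigl(\log\frac{e}{1-r}\bigr)^{-\a}$ is positive, decreasing and vanishes at $1$, so $\om=-W'$ is a radial weight with $\widehat{\om}=W$ and $\widehat{\om}_{[p-2]}=v_\a$, where $v_\a$ is the weight \eqref{V}; then $v_\a\in\DD\setminus\DDD$, and since $\DDD=\DD\cap\M$ we get $v_\a\notin\M$, so the equivalence fails. Thus your plan cannot be completed for $0<p\le1$, and part (i) is left unproved there. The repair is immediate and is exactly what the paper does: part (i) needs only the opposite direction $x^{2-p}\bigl(\widehat{\om}\bigr)_x\lesssim\bigl(\widehat{\om}_{[p-2]}\bigr)_x$, which is Lemma~\ref{D-hat-lemma}(iii) applied to the hypothesis weight $\widehat{\om}_{[p-2]}\in\DD$ with $\b=2-p\ge0$ (again because $\bigl(\widehat{\om}_{[p-2]}\bigr)_{[2-p]}=\widehat{\om}$); this is valid uniformly for $0<p\le2$, requires no information on $\om$ itself, and involves no delicate analysis. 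Symmetrically, part (ii) needs only $x^{p-2}\bigl(\widehat{\om}_{[p-2]}\bigr)_x\lesssim\bigl(\widehat{\om}\bigr)_x$ for $p\ge2$: Lemma~\ref{D-hat-lemma}(v) upgrades the hypothesis to $\om\in\DD$, hence $\widehat{\om}\in\DD$, and Lemma~\ref{D-hat-lemma}(iii) applied to $\widehat{\om}$ with $\b=p-2\ge0$ finishes; your alternative chain through $\int_r^1\widehat{\om}_{[p-2]}(t)\,dt\asymp\widehat{\om}(r)(1-r)^{p-1}$ is also valid there because $p>1$. In short: keep your reduction, discard the two-sided equivalence, and prove in each part only the single inequality it needs via Lemma~\ref{D-hat-lemma}(iii).
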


\begin{proof}
Let first $0<p\le2$. Hardy-Littlewood inequality~\cite[Theorem~6.2]{Duren70} yields
	\begin{equation*}
	\begin{split}
	\|f'\|^p_{A^p_{\widehat{\om}_{[p-2]}}}
	&\gtrsim\sum_{k=1}^\infty |\widehat{f}(k)|^pk^{2p-2}\left(\widehat{\om}_{[p-2]}\right)_{p(k-1)+1}.
	\end{split}
	\end{equation*}
Since $\widehat{\omega}_{[p-2]}\in\DD$ by the hypothesis, Lemma~\ref{D-hat-lemma}(iv)(iii) and Fubini's theorem imply
	$$
	\left(\widehat{\om}_{[p-2]}\right)_{p(k-1)+1}
	\gtrsim\left(\widehat{\om}_{[p-2]}\right)_{k}
	\gtrsim k^{2-p}\widehat{\om}_k
	\asymp k^{1-p}\om_k,\quad k\in\N,
	$$
and the assertion in (i) follows.

Let $2\le p<\infty$ and $\widehat{\omega}_{[p-2]}\in\DD$. Then Lemma~\ref{D-hat-lemma}(v) yields $\om\in\DD$, and hence $\widehat{\om}\in\DD$ by an integration by parts. Hardy-Littlewood inequality~\cite[Theorem~6.3]{Duren70} and Lemma~\ref{D-hat-lemma}(iii), applied to $\widehat{\om}\in\DD$, yield
	\begin{equation*}
	\begin{split}
	\|f'\|^p_{A^p_{\widehat{\om}_{[p-2]}}}
	&\lesssim\sum_{k=1}^\infty |\widehat{f}(k)|^pk^{2p-2}\left(\widehat{\om}_{[p-2]}\right)_{p(k-1)+1}
	\le\sum_{k=1}^\infty |\widehat{f}(k)|^pk^{2p-2}\left(\widehat{\om}_{[p-2]}\right)_{k}\\
	&\lesssim\sum_{k=1}^\infty |\widehat{f}(k)|^pk^{p}\widehat{\om}_{k}
	\asymp\sum_{k=1}^\infty |\widehat{f}(k)|^pk^{p-1}\om_{k},
	\end{split}
	\end{equation*}
and thus the proposition is proved.
\end{proof}

\section{Declarations}

\noindent Ethical Approval: Not applicable.

\noindent Funding: The second author was supported in part by Academy of Finland 356029. 

\noindent Availability of data and materials: Not applicable.

\end{document}